\documentclass{amsart}
\textwidth=125 mm
\textheight=195 mm
\usepackage{amsmath,amsthm}
\usepackage[english]{babel}
\usepackage{mathrsfs,amssymb}
\usepackage[matrix,arrow]{xy}
\usepackage{mathptmx}
\usepackage{setspace}
\usepackage{amscd}

 \theoremstyle{plain}    
 \newtheorem{theorem}{Theorem}[section]
 \numberwithin{equation}{section} 
 \numberwithin{figure}{section} 
 \theoremstyle{definition}
 \newtheorem{definition}[theorem]{Definition}
 
 \theoremstyle{plain}    
 \newtheorem{proposition}[theorem]{Proposition}
 \theoremstyle{remark}
 \newtheorem{remark}[theorem]{Remark}
 \newtheorem{example}[theorem]{Example}

\theoremstyle{plain}    
 \newtheorem{lemma}[theorem]{Lemma} 
 
 \theoremstyle{definition}
 \theoremstyle{plain}    
 \newtheorem{corollary}[theorem]{Corollary} 

\newcommand{\dif}{\mathrm{d}}

\newcommand{\Z}{\mathbb{Z}}

\newcommand{\R}{\mathbb{R}}
\newcommand{\C}{\mathbb{C}}

\newcommand{\T}{\mathscr T}
\newcommand{\D}{\mathscr D}

\newcommand{\cl}{\mathfrak{c}}
 \DeclareMathOperator{\tr}{tr}
\DeclareMathOperator{\Tr}{Tr}

\DeclareMathOperator{\Hom}{Hom} 
 \DeclareMathOperator{\Ker}{Ker}

\DeclareMathOperator{\Cl}{Cl}

\DeclareMathOperator{\ind}{ind}

\DeclareMathOperator{\supp}{supp}
\DeclareMathOperator{\str}{str}

\usepackage{epsfig}

\begin{document}


\title{$L^2$-index formula for proper cocompact group actions}

\author{Hang Wang}
\address{1227I Stevenson Center, Department of Mathematics, Vanderbilt University, Nashville, TN, USA, 37240.}
\address[Current Address]{Room 143, Mathematical Sciences Center, Jinchunyuan West Building, Tsinghua University, Hai Dian District, Beijing, China 100084. }
\email{hwang@math.tsinghua.edu.cn, hang.wang@vanderbilt.edu.}

\begin{abstract}We study index theory of $G$-invariant elliptic pseudo-differential operators acting on a complete Riemannian manifold, where a unimodular, locally compact group $G$ acts properly, cocompactly and isometrically. An $L^2$-index formula is obtained using the heat kernel method.
\end{abstract}

\maketitle

{\em Mathematics Subject Classifacation (2010):}{19K56, 58J35, 58J40}

{\em Keywords:}{$L^2$-index, $K$-theoretic index, $G$-trace, heat kernel.}

\tableofcontents{}

\section{Introduction.}

\subsection{Main result.}

Let $X$ be a complete Riemannian manifold acted on properly, cocompactly and isomertrically by a locally compact unimodular group $G$ and let $E$ be a $\Z/2\Z$-graded $G$-vector bundle over $X$. 
Let $$P=\begin{pmatrix}0 & P_0^{\ast} \\ P_0 & 0 \end{pmatrix}: L^2(X,E)\rightarrow L^2(X,E)$$ be a $0$-order properly supported elliptic pseudo-differential operator invariant under the group action. 
Such an operator has a real-valued $L^2$-index defined as the difference of the von Neumann traces of the projections onto the closed $G$-invariant subspaces $\Ker P_0, \Ker P_0^{\ast}$ of $L^2(X,E)$:
$$\ind P=\tr_G P_{\Ker P_0}-\tr_G P_{\Ker P_0^{\ast}}.$$

The paper is to prove that the $L^2$-index of $P$ is calculated by the following topological formula:
\begin{equation}\label{iformula} \ind P=\int_{TX}(c\circ\pi)\cdot(\hat{A}(X))^2 \mathrm{ch}(\sigma_P).\end{equation}
Here $c\in C^{\infty}_c(X)$ is a non-negative function satisfying $\displaystyle\int_Gc(g^{-1}x)\mathrm{d}g=1$ for all $x\in X$, and $\pi: TX\rightarrow X$ is the projection. 

\subsection{Remarks on the result.}

The formula (\ref{iformula}) generalizes the $L^2$-index formula for free cocompact group actions due to  Atiyah \cite{Atiyah:1976} and the $L^2$-index formula for homogeneous spaces of unimodular Lie groups due to Connes and Moscovici \cite{Connes:1982}. 
The study of $L^2$-indices in general has implications in other areas of mathematics. 
For example, the non-vanishing of the $L^2$-index for the signature operator on $X$ indicates the existence of $L^2$-harmonic forms on $X$. 
The $L^2$-index is of interest in the study of discrete series representations \cite{Connes:1982} and has been modified for use in a proof of the Novikov conjecture for hyperbolic groups \cite{Connes:1990ht}. 

Our index formula (\ref{iformula}) is analogous to the type II theory in von Neumann algebra. The key feature of a type II index theory is that the elliptic operators being investigated are no longer Fredholm, but using some techniques analogous to those used in type II von Neumann theory, say, by formulating some trace, one may obtain generalized Fredholm indices associated to the elliptic operators.  Refer to \cite{Roe:1988qy, Roe:1988uq} for another example of this type.  

When the orbit space $X/G$ is an orbifold, the $L^2$-index discussed in this paper is not the same as the index for $X/G$ as a compact orbifold  \cite{Mathai:2001, Mathai:10}. 
For example, Dirac operators on a good orbifold are Fredholm and have integer indices, reflecting the information of the orbit space, while the $L^2$-indices of the Dirac operators lifted to the universal cover of the orbifold are rational numbers by definition. The integer indices and the rational indices are different in general \cite{Farsi:1992}. They coincide on spacial cases, for example, when the orbit space is a smooth manifold \cite{Atiyah:1976}.
Another example is that when both $X$ and $G$ are compact, (\ref{iformula}) is the same as the Atiyah-Singer index formula for compact manifolds, regardless of the group $G$ \cite{Atiyah:1968pd}, while the index formula corresponding to the orbifold $X/G$ involves group action \cite{Mathai:10}. 
Our formula is expected to have interesting applications when the group is not compact. 

We also notice the existence of $L^2$-index formula (in some special cases) when $X/G$ is a noncompact orbifold but has finite volume \cite{Stern:1989}, where the analysis on the strata of  of $X/G$ is heavily used. It is interesting to study the $L^2$-index (if exists) where the quotient is noncompact. However, our operator algebraic approach in finding the formula of $L^2$-index does not work for the case of  noncompact quotient. 
The reason is that when $G$ acts properly, cocompactly and isometrically on $X$, the group $G$ and the manifold $X$ are coarse equivalence, then we may use $G$, more precisely, $C^{\ast}(G)$ to study the elliptic operators on $X$ invariant under the action of $G$ \cite{Miscenko:1980fu, Lott:1992}.
However, when $X/G$ is not compact, the group $G$ has nothing to do with the $L^2$-index for $G$-invariant elliptic operators on $X$.  

Finally, (\ref{iformula}) fits into the framework of the higher index formula taking values in cyclic theory. In \cite{Perrot:2009}, a general formula was proved and the indices of Dirac operators take values in the entire cyclic homology of some subalgebra of the group $C^{\ast}$-algebra $C^{\ast}(G).$ Formally, for Dirac operators, (\ref{iformula}) is obtained from \cite{Perrot:2009} corollary 1.2 by taking $g\in G$ to be the group identity and by taking $n=0.$ We would like to have a deeper investigation on the connection of the two results in future.

\subsection{Idea of the proof.}

To prove (\ref{iformula}), regard $P$ as an element in the $K$-homology group $K_G^0(C_0(X)),$ from which $P$ has a higher index in $K_0(C^{\ast}(G)),$ where $C^{\ast}(G)$ is the maximal group $C^{\ast}$-algebra.
The $L^2$-index of $P$ depends only on the equivalence class of its higher index in $K_0(C^{\ast}(G))$. 
This is proved in section \ref{L^2K} by defining a trace on a dense holomorphic closed ideal $\mathcal{S}(\mathcal{E})$ in $\mathcal{K}(\mathcal{E})$, where $\mathcal{E}$ is a Hilbert $C^{\ast}(G)$-module having the same $K$-theory as $C^{\ast}(G)$. The trace is the von Neumann trace of a type II von Neumann algebra in the sense of Breuer \cite{Breuer:1968}.  A comprehensive discussion on the link between the $L^2$-index and the higher index may be found in \cite{Schick:2005}.

Secondly, in section \ref{5}, we reduce the problem of finding $\ind P$ into finding $\ind D$ for some Dirac type operator $D$, which has the same higher index as $P$. Kasparov's K-theoretic index formula \cite{Kasparov:1983} is essential in the argument. The formulation of Dirac type operators out of elliptic operators is also related to the vector bundle modification construction in the definition of geometric $K$-homology \cite{BD:1982}. 

 The final step is to calculate $\ind D$ using the heat kernel method. 
When $D$ is a first order $G$-invariant operator of Dirac type on $X$, we have the McKean-Singer formula for the $L^2$-index: 
\begin{equation}\label{ch1MS}\ind D=\tr_G e^{-tD^{\ast}D}-\tr_G e^{-tDD^{\ast}}, t>0.\end{equation}
In the case of a compact manifold without group action, a cohomological formula was obtained by studying the local invariants of metrics and connections \cite{ABP:1973, Gilkey:1973}. The proof of the local index formula was simplified by a rescaling argument of Getzler \cite{Getzler:1986} on the asymptotic expansion of the heat kernel $e^{-tD^2}$ around $t=0$.
Since the index $\ind D$ in (\ref{ch1MS}) is local when $t\to0+$, 
the group action does not affect the calculation. 
The proof is based on a modification of the proofs in \cite{Roe:1998ad, BGV} and is complete in section \ref{6}.

\subsection{Acknowledgement.}
The work is modified after my PHD thesis and is funded by NSF, Vanderbilt University and IHES.  
Special thanks go to Professor Gennadi Kasparov for proposing this topic and for his advice. 
After writing up my paper, I received many comments as well as warm helps. I wish to express my sincere gratitude to all the professors who have helped me.
Finally, I would like to thank the referee for the helpful remarks. 

\section{Preliminaries.}

Let $G$ be a {locally compact} and {\em unimodular} group, that is, there is a bi-invariant Haar measure $\mu$ on $G$. For example, compact groups and discrete groups are unimodular. 
Set $\mathrm{d}g\doteq \dif\mu(g)$ and we have 
$$\mathrm{d}(tg)=\mathrm{d}g, \mathrm{d}(gt)=\mathrm{d}g\text{ and }\mathrm{d}(g^{-1})=\mathrm{d}g\text{ for any }g,t\in G.$$
Let $X$ be a complete Riemannian manifold, on which $G$ acts {properly}, {cocompactly} and { isometrically}, that is, the pre-image of any compact set under the continuous map 
$$G\times X\rightarrow X\times X: (g,x)\mapsto (g\cdot x,x)$$
 is compact, 
 the quotient space $X/G$ is compact, 
 and $G$ respects the metric $<\cdot, \cdot>$: 
 $$<x,y>=<gx,gy>\text{ for all }x, y\in X, g\in G.$$
The reason to consider proper cocompact actions is the existence of a cutoff function on $X$.

\begin{definition}
{A nonnegative function $c\in C_c^{\infty}(X)$ is a \emph{cutoff function} if for all $x\in X,$ $$\displaystyle\int_Gc(g^{-1}x)\dif g=1.$$}
\end{definition}

\begin{remark}
     A proper cocompact $G$-space has a cutoff function $c\in C_c^{\infty}(X)$ given by
     $$c(x)=\frac{h(x)}{\int_G h(g^{-1}x)\mathrm{d}g},$$ where $h(x)\in C^{\infty}_c(X)$ is nonnegative and has non-empty intersection with each orbit. 
\end{remark}

\begin{example}
{Let $G$ be a Lie group with a compact subgroup $H$, and let $X=G/H$ be the homogeneous space consisting of all the left cosets of $H$ in $G$. The action of $G$ on $X$ is proper. Further, let $E$ be a representation space of $H$. The induced representation $Y=G\times_HE$, which forms a $G$-vector bundle over $X$, is a proper $G$-space.  According to the slice theorem, every proper space has such a local structure.}
\end{example}

\begin{theorem}[Slice theorem]
Let $G$ be a locally compact group and $X$ be a proper $G$-space. Then for any $x\in X$ and for any neighborhood $O$ of $x$ in $X$, there exists a compact subgroup $K$ of $G$ with $G_x\doteq\{g\in G|gx=x\}\subset K$ and a $K$-slice $S$ such that $x\in S\subset O.$ Recall that A $K$-invariant subset $S\subset X$ is a $K$-slice in $X$ if 
\begin{enumerate}
\item The union $G(S)$(tubular set) of all orbits intersecting $S$ is open;
\item There is a $G$-equivariant map $f: G(S)\rightarrow G/K$ (the slicing map), such that $S=f^{-1}(eK)$.
\end{enumerate}  
\end{theorem}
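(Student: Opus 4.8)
The plan is to use properness to replace $G$, near the orbit of $x$, by a compact subgroup $K$, and then to realize the slice as a transverse neighborhood of the orbit. Properness of the action forces the stabilizer $G_x$ to be compact: the map $g\mapsto(gx,x)$ is proper, being the composite of the closed embedding $g\mapsto(g,x)$ with $(g,y)\mapsto(gy,y)$, so $G_x$, the preimage of $(x,x)$, is compact. Properness also gives, for every compact neighborhood $V$ of $x$, that the return set $G_V\doteq\{g\in G: gV\cap V\neq\varnothing\}$ is compact --- it is the image in $G$ of the preimage of the compact set $V\times V$ under $(g,y)\mapsto(gy,y)$ --- while $G_V\supseteq G_x$, the family $\{G_V\}$ is directed downward under inclusion, and $\bigcap_V G_V=G_x$; hence every neighborhood of $G_x$ in $G$ contains some $G_V$, so the action of $G$ near $x$ is controlled by arbitrarily small relatively compact pieces of $G$.

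The heart of the matter is to produce a compact subgroup $K$ with $G_x\subseteq K\subseteq G$ adapted to $x$. When $G$ is a Lie group one may take $K=G_x$ itself --- this is the classical situation treated by Palais --- or, if one wants $G/K$ to carry a $G$-invariant Riemannian structure, one enlarges $G_x$ to a maximal compact subgroup, whose existence is part of the structure theory of almost connected Lie groups. For a general locally compact $G$ one appeals to the Gleason--Yamabe structure theory: after passing to a suitable open subgroup one presents the relevant part of $G$ as an inverse limit of Lie groups modulo small compact normal subgroups; choosing such a compact normal subgroup $N$ inside a neighborhood of $G_x$ furnished by the previous paragraph, the image of $G_x$ in the Lie quotient is compact, the Lie-group construction applies there, and one pulls the resulting compact subgroup back through the quotient map and reassembles over the inverse system to obtain $K$. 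I expect this to be the main obstacle: it is where the full structure theory of locally compact groups is needed, and one must check that the object produced is an honest compact subgroup rather than an approximate one, and that it can be fixed independently of the neighborhood $O$, only $S$ being shrunk afterwards.

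Finally, given $K$, one builds the slice. Averaging a small open neighborhood of $x$ over the compact group $K$ against its normalized Haar measure produces $K$-invariant open sets $W$ with $x\in W\subseteq O$ and compact closure. The essential point is to choose $W$ small enough and transverse to the orbit $G\cdot x\cong G/G_x$ --- in the Lie case by taking $W=\exp$ of a small $K$-invariant ball in a $G_x$-invariant complement to the tangent space of the orbit at $x$, and in general by transporting this through the Gleason--Yamabe reduction --- so that $g\in G$ with $gW\cap W\neq\varnothing$ is forced to lie in $K$; properness is exactly what makes such a transverse $W$ exist, by bounding how the orbit returns near itself. Granting this, the saturation $G(W)$ is open, and the assignment $gw\mapsto gK$ is well defined on $G(W)$: if $gw=g'w'$ with $w,w'\in W$ then $g^{-1}g'$ carries $w'$ back into $W$, hence lies in $K$, so $gK=g'K$. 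This map is continuous and $G$-equivariant, so it is the slicing map $f\colon G(W)\to G/K$; since $W$ is $K$-invariant one has $f^{-1}(eK)=KW=W$. Hence $S\doteq W$ is a $K$-slice with $x\in S\subseteq O$ and $G(S)=G(W)$ open, as required.
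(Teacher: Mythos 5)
This theorem is not proved in the paper at all: it is quoted from the literature (Abels' theorem on large compact subgroups and slices for proper actions; the paper points to Antonyan's survey and to Bredon for the tube description $G(S)\cong G\times_K S$). So the comparison can only be with the known proofs, and measured against those your outline has the right shape but leaves the two genuinely hard steps unproved. First, the construction of the compact subgroup $K\supseteq G_x$. Your first paragraph is correct (properness gives $G_x$ compact, the return sets $G_V$ compact, $\bigcap_V G_V=G_x$, and every neighborhood of $G_x$ contains some $G_V$), but the passage from there to an honest compact \emph{subgroup} $K$ via Gleason--Yamabe is exactly the content of Abels' theorem, and ``reassembles over the inverse system to obtain $K$'' does not do it: one must first reduce to an almost connected open subgroup (van Dantzig in the totally disconnected direction), then use that the quotient by a small compact normal subgroup is Lie and admits maximal compact subgroups, and check that the pulled-back subgroup still contains $G_x$ and that the $N$-orbits are compatible with the slice downstairs. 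You flag this yourself as the main obstacle, which is honest, but it means the proof is not complete at its central point.

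Second, the slice itself. Your final verification is fine \emph{granted} a $K$-invariant open $W\ni x$ with $\{g\in G: gW\cap W\neq\varnothing\}\subseteq K$: then $G(W)$ is open, $gw\mapsto gK$ is well defined, continuous and equivariant, and $f^{-1}(eK)=KW=W$. But the existence of such a $W$ is the crux, and it does not follow from paragraph one: the fact that small $G_V$'s shrink to $G_x$ gives $G_V$ inside any \emph{neighborhood} of $G_x$ in $G$, whereas $K$ is a compact subgroup and in general not a neighborhood of $G_x$, so you cannot conclude $G_V\subseteq K$; the extra returns lie along the orbit direction and must be killed by transversality, which is precisely what has to be constructed. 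Your construction of the transverse $W$ by exponentiating a $K$-invariant ball in a complement to $T_x(Gx)$ presupposes that $X$ is a smooth manifold and that the orbit is a submanifold, while the theorem as stated is for an arbitrary proper $G$-space; the general case (Antonyan) proceeds quite differently, e.g.\ via equivariant embeddings, and even in the manifold case one needs an invariant metric near the orbit and a tube-lemma argument to verify the return-set condition. So the proposal is a correct high-level roadmap for the Lie/manifold case but not a proof of the stated theorem.
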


An introduction to the slice theorem may be found in \cite{Antonyan:2010} section 2. 
According to \cite{Bredon} Ch.II Theorem 4.2, the tubular set $G(S)\subset X$ with a compact slicing subgroup $K$ is $G$-homeomorphic to $G\times_KS$. 

\begin{remark}
Since $X$ is covered by $G$-invariant neighborhoods and since $X/G$ is compact, then $X$ admits a finite sub-cover, that is,
\begin{equation}\label{local structure}X=\cup_{i=1}^N G\times_{K_i}S_i=\cup_{i=1}^N G(S_i).\end{equation}
The local structure (\ref{local structure}) of $X$ defines a $G$-invariant measure $\mathrm{d}x$ on $X$. In fact, The measure of a set in $G(S_i)$ is calculated from the measure on $G$ and on $S_i$ divided by the measure of $K_i$. 
Then the measure of a set $T\subset X$ is defined using a partition of unity argument. 
The $1$-density on the Riemannian manifold $X$ also defines the same measure.
\end{remark}

In order to introduce ellipticity, we recall the following definitions concerning pseudo-differential operators.
Let $(E, p)$ be a finite dimensional complex $G$-vector bundle over $X$, that is, there is a smooth $G$ action on $E$ such that $p(gv)=gp(v)$ for $v\in E$ and the maps of the fibers $g: E_x\rightarrow E_{gx}$ are linear. 
Let $\pi: T^{\ast}X\rightarrow X$ be the projection map and $\pi^{\ast}E$ over $T^{\ast}X$ be the pull-back bundle of $E$.
Here, $E=E_0\oplus E_1$ is  $\Z/2\Z$-graded and the $G$-action is grading preserving.
The $G$-actions on $E_0,E_1$ give rise to a $G$-bundle $\Hom(\pi^{\ast}E_0,\pi^{\ast}E_1)$ over $T^{\ast}X$.
A \emph{symbol} function $\sigma$ of order $m$ is a continuous section of this $G$-bundle satisfying 
\begin{equation} |\frac{\partial^a}{\partial x^{|a|}}\frac{\partial^b}{\partial\xi^{|b|}}\sigma(x,\xi)|\le C_{a,b,K}(1+\|\xi\|)^{m-|b|} \end{equation} for $x$ in any compact set $K\subset X$ and $\xi$ in the fiber $T_xX$, where $C_{a,b,K}$ is a constant depending on $a,b,K.$
Here $a=(a_1, \ldots, a_n), b=(b_1, \ldots, b_n)$ and $\displaystyle|a|=\sum_{i=1}^n a_i, |b|=\sum_{i=1}^n b_i  (\mathrm{dim}X=n).$
The set of all order $m$ symbols is denoted by $S^m(X; E_0, E_1)$ and a \emph{principal symbol} of order $m$ is an element in the quotient $S^{m}(X;E_0,E_1)/S^{m-1}(X; E_0, E_1).$ We shall omit the word ``principal" from now on.

Each symbol $\sigma$ has an \emph{amplitude} $p$ defined by 
$$p(x,y,\xi)=\alpha(x,y)\sigma(q(y,(x,\xi_x))),$$
where $\alpha\in C^{\infty}(X\times X)$ has support contained in a small neighborhood of the diagonal so that $\alpha(x,x)=1$ and $\alpha(x,y)\ge0$ for all $x,y\in X$ and $q: X\times T^{\ast}X\rightarrow T^{\ast}X: (y,(x,\xi_x))\mapsto (y,\xi_y)$ ($\xi_y$ is the parallel transport of $\xi_x$ from $x$ to $y$). 
Conversely, $$\sigma(x,\xi)=p(x,x,\xi).$$

Denote by $C^{\infty}_c(X,E)$ the set of smooth sections of $E$ with compact support in $X$ and $G$ acts on $C^{\infty}_c(X,E)$ by $(g\cdot f)(x)=g(f(g^{-1}x)),$ for all $g\in G, f\in C_c^{\infty}(X,E).$To each amplitude $p(x,y,\xi)$, we may construct a \emph{pseudo-differential operator} $P_0: C^{\infty}_c(X, E_0)\rightarrow C^{\infty}(X,E_1)$ by 
\begin{equation}\label{constrop}
P_0u(x)=\int_{X\times T_x^{\ast}X}e^{i\Phi(x,y,\xi)}p(x, y, \xi)u(y)\dif y\dif\xi_x,
\end{equation}
where $\Phi(x,y,\xi)=<\exp^{-1}_x(y), \xi_x>$ is the phase function.
The \emph{Schwartz kernel} $K_{P_0}(x,y)\in \Hom({E_0}_y,{E_1}_x)$ of $P_0$, that is,  
$$\displaystyle P_0u(x)=\int_X K_{P_0}(x,y)u(y)\dif y\text{ for all }u(x)\in C^{\infty}_c(X,E_0),$$ is expressed in the following distributional sense,
\begin{equation}K_P(x,y)(w)=\int_{X\times T^{\ast}X}e^{i\Phi(x,y,\xi)}p(x,y,\xi)w(x,y)\dif x\dif y\dif\xi,\quad w\in C^{\infty}_c(X\times X).\end{equation}

We assume $P_0$ to be {\em $G$-invariant}, that is, $$\displaystyle P_0(gf)=gP_0(f), f\in C_c^{\infty}(X, E_0),\text{ for all }g\in G.$$
Clearly, the Schwartz kernel of a $G$-invariant operator $P_0$ satisfies that
\begin{equation}K_{P_0}(x,y)=K_{P_0}(gx,gy) \text{ for all } x,y\in X, g\in G.\label{kernel}\end{equation} 
In addition, assume $P_0$ to be \emph{properly supported}, that is, for any compact subset $K\subset X$, the subsets $\supp K_P\cap(K\times X)$ and $\supp K_P\cap(X\times K)$ in $X\times X$ are compact. 
Proper supportness of $P_0$ in particular implies that $P_0$ maps $C^{\infty}_c(X, E_1)$ to itself.

Choose a $G$-invariant Hermitian structure on $E$ and let $L^2(X,E)$ be the completion of $C_c(X,E)$ under inner product, $\displaystyle <f, g>_{L^2}=\int_X<f(x), g(x)>_{E_x}\dif x.$ 
Let $P$ be an essentially self-adjoint operator on $L^2(X,E)$ with odd grading, in the form of 
$$P=\begin{pmatrix}0 & P_0^{\ast} \\ P_0 & 0\end{pmatrix}.$$
Without loss of generality, $P$ is assumed to be of order $0$ and then $P$ extends to be a bounded self-adjoint operator on $L^2(X,E).$

We shall use the following notations and we omit $E, F$ or $X$ when it is clear in the context.
\begin{itemize}
\item $\Psi^n(X;E,F)$: the set of order $n$ pseudo-differential operators from $C_c^{\infty}(X,E)$ to $C^{\infty}(X,F)$;
\item $\Psi^n_{G}(X; E,F)$: the subset of $G$-invariant elements in $\Psi^n(X;E,F)$;
\item $\Psi^n_{G,p}(X;E,F)$: the subset of properly supported elements in $\Psi^n_{G}(X; E,F)$;
\item $\Psi^n_c(X;E,F)$: the subset of $\Psi^n(X;E,F)$ having compactly supported Schwartz kernels. 
\end{itemize}

The symbol of an operator $P\in\Psi^{\ast}_{G,p}$ is $G$-invariant.
Conversely, if $\sigma(x,\xi)$ is a $G$-invariant symbol, then there is an operator in $\Psi^{\ast}_{G,p}$ with symbol $\sigma(x,\xi)$. To do this we construct $P$ using (\ref{constrop}) and use the averaging operation from \cite{Connes:1982}:
\begin{equation}\mathrm{Av}_G: \Psi^{\ast}_c\rightarrow\Psi^{\ast}_{G,p}: P\mapsto\int_G gPg^{-1}\dif g. \end{equation}
Then $Av_G(cP)\in\Psi^{\ast}_{G,p}$, where $c$ is a cutoff function for $X$, has the symbol $\sigma(x,\xi)$. 

\begin{definition}\label{defell} \cite{Kasparov:2008}
A pseudo-differntial operator $P\in\Psi^{m}(X;E,F)$ is \emph{elliptic} if there exists $Q\in\Psi^{-m}(X; F, E)$ so that
\begin{equation}\|\sigma_P(x,\xi)\sigma_Q(x,\xi)-I\|\to0\text{ and }\|\sigma_Q(x,\xi)\sigma_P(x,\xi)-I\|\to0 \label{elli}\end{equation} 
uniformly in $x\in K$  as $\xi\to\infty$ in $T_{x}^{\ast}X$ for any compact subset $K$ in $X$.
Without loss of generality, we will consider order-$0$ elliptic pseudo-differential operators  $P_0\in\Psi^0_{G,p}(X; E_0, E_1)$ with the condition (\ref{elli}) replaced by
\begin{equation}\|\sigma_{P_0}(x,\xi)\sigma_{P_0^{\ast}}(x,\xi)-I\|\to0\text{ and }\|\sigma_{P_0^{\ast}}(x,\xi)\sigma_{P_0}(x,\xi)-I\|\to0.\label{ell}\end{equation}
\end{definition}

\begin{proposition} \label{PDOprop}
\begin{enumerate}
\item If $P\in\Psi_c^n$, then $\mathrm{Av}_G(P)\in\Psi_{G,p}^n$.
\item If $P\in\Psi^n_{G,p}(X)$ is elliptic, then there exists a parametrix $Q\in\Psi^{-n}_{G,p}(X)$ such that 
\begin{equation}1-PQ=S_1\in\Psi^{-\infty}_{G,p}(X), 1-QP=S_2\in\Psi^{-\infty}_{G,p}(X),\end{equation}
where $\Psi^{-\infty}_{G,p}(X)=\cap_{n\in\R}\Psi^n_{G,p}(X)$ is the set of smoothing operators. 
\item If $S\in\Psi^{-\infty}_{G,p}(X)$, then $K_S(\cdot,\cdot)$ is smooth and properly supported.
\end{enumerate}
\end{proposition}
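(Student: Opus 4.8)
The plan is to prove the three assertions in turn, in each case transplanting a classical step of the pseudo-differential calculus to the $G$-invariant, properly supported setting and invoking properness of the action wherever a support region must be seen to be compact. Part (1) is the foundational one and the only one requiring real care. On the level of Schwartz kernels, $gPg^{-1}$ has kernel $(x,y)\mapsto g\cdot K_P(g^{-1}x,g^{-1}y)\cdot g^{-1}$, so $\supp K_{gPg^{-1}}=g\cdot\supp K_P$; since $\supp K_P$ is compact and $G$ acts properly on $X\times X$, only a compact set of group elements contributes near any fixed compactum, so $\mathrm{Av}_G(P)=\int_G gPg^{-1}\,\dif g$ is a locally finite superposition and converges. $G$-invariance is immediate from bi-invariance of the Haar measure, and $\supp K_{\mathrm{Av}_G(P)}\subset G\cdot\supp K_P$ meets every $K\times X$ and $X\times K$ in a compact set by properness, so $\mathrm{Av}_G(P)$ is properly supported. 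For the order: each $g$ being an isometry, conjugation by $g$ carries an order-$n$ amplitude built from (\ref{constrop}) to an order-$n$ amplitude with the same symbol-estimate constants on the $G$-translate $gK$ as on $K$; integrating over the compact set of contributing $g$ preserves these estimates uniformly on compacta, giving $\mathrm{Av}_G(P)\in\Psi^n_{G,p}$.

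For (2) I would run the usual formal-Neumann-series parametrix construction, checking at each stage that one stays inside $\Psi^{\ast}_{G,p}$. Ellipticity and $G$-invariance make $\sigma_P$ a $G$-invariant symbol invertible for $\|\xi\|$ large; multiplying $\sigma_P^{-1}$ by a $G$-invariant cutoff that vanishes near the zero section and equals $1$ near infinity yields a $G$-invariant order-$(-n)$ symbol, hence (by (\ref{constrop}) followed by $\mathrm{Av}_G(c\,\cdot)$ for a cutoff function $c$, using part (1)) an operator $Q_0\in\Psi^{-n}_{G,p}$ with that symbol. Then $1-PQ_0$ and $1-Q_0P$ lie in $\Psi^{-1}_{G,p}$, composition of properly supported $G$-invariant operators being again of that type. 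An asymptotic sum $Q\sim Q_0\sum_{k\ge0}(1-PQ_0)^k$, realized so as to remain $G$-invariant and properly supported (each partial sum already is, and the remainder at stage $k$ has order $\le -k$), then satisfies $Q\in\Psi^{-n}_{G,p}$ and $1-PQ,\,1-QP\in\Psi^{-\infty}_{G,p}$; a left parametrix and a right parametrix agree modulo smoothing operators, so one $Q$ serves both sides.

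For (3) I would argue directly that order $-\infty$ forces a smooth kernel. Expressing $K_S$ as the oscillatory integral of the Preliminaries with amplitude $p_S(x,y,\xi)$ satisfying the defining symbol estimates for every real $m$, one may differentiate under the integral sign arbitrarily often in $x$ and $y$: each derivative brings down at worst polynomial growth in $\xi$ from the phase $\Phi$ and from $p_S$, but taking $m$ negative enough keeps the integrand absolutely integrable over $T_x^{\ast}X$, so $K_S\in C^{\infty}$; proper support is part of the hypothesis $S\in\Psi^{-\infty}_{G,p}$.

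The main obstacle is the order-preservation claim inside (1) — that the symbol class $S^n(X;E_0,E_1)$ is stable under group averaging. This is exactly where the hypotheses on the action are genuinely used: the isometry hypothesis keeps the symbol estimates $g$-invariant under conjugation, while properness and cocompactness make the set of contributing $g$ compact and the constants $C_{a,b,K}$ uniform on a $G$-invariant neighbourhood of each compactum, so that the integration over $G$ does not spoil the estimates. Once (1) and the explicit constructions recalled in the Preliminaries are available, (2) and (3) are routine.
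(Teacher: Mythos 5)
Your proposal is correct. Part (1) is essentially the paper's argument: the paper also averages the amplitude, computes it as $\int_G p(g^{-1}x,g^{-1}y,\xi_{g^{-1}x})\,\dif g$, and observes that for $(x,y)$ in a fixed compactum only a compact set of $g$ contributes, so the order-$n$ estimates survive; your kernel-level phrasing and the appeal to isometry-invariance of the seminorms is the same mechanism.

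For parts (2) and (3) you take a genuinely different, more classical route. In (2) the paper does not invert the symbol globally and run a Neumann series; instead it covers $\supp(c)$ by finitely many coordinate balls $U_i$, takes classical local parametrices $Q_i\in\Psi_c^{-n}(U_i)$ with $Q_iP-a_i$ smoothing, forms $c\sum_i Q_i$, and then applies the averaging operator of part (1) to produce $Q\in\Psi^{-n}_{G,p}$ directly, with $QP-I=\int_G g(c)\,g(\sum_i R_{2,i})\,\dif g$ smoothing; a separate right parametrix is built the same way and identified with $Q$ modulo $\Psi^{-\infty}_{G,p}$, exactly as you do. Your global construction buys a cleaner statement but requires two ingredients the paper's route avoids: a $G$-invariant realization of an asymptotic sum inside $\Psi^{\ast}_{G,p}$ (doable by the usual Borel-type cutoffs, all of which can be chosen $G$-invariantly, but it should be said), and the composition rule $\Psi^a_{G,p}\circ\Psi^b_{G,p}\subset\Psi^{a+b}_{G,p}$ with the attendant symbol expansion; the paper's "localize, use the compact-support calculus, then average" pattern delegates all symbolic work to the classical local theory and only uses part (1). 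Similarly in (3) you differentiate the oscillatory integral directly, whereas the paper cuts off ($cS\in\Psi_c^{-\infty}$ has smooth compactly supported kernel by the classical equivalence) and recovers $K_S$ as $\int_G K_{cS}(g^{-1}x,g^{-1}y)\,\dif g$, again a compactly supported integral in $g$. Both are valid; note only that, as in the paper's own proof of (2), your parametrix construction implicitly uses the classical quantitative form of ellipticity ($|\sigma_P|\ge C(1+|\xi|)^n$ for large $\xi$ on compacta) rather than the literal condition of Definition \ref{defell}, which by itself only gives errors vanishing at infinity in $\xi$ rather than of order $-1$.
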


\begin{proof}
(1) Clearly, $\mathrm{Av}_G(P)\in\Psi_{G,p}^{\ast}(X).$ 
If $p(x,y,\xi)\in S^m(X\times T^{\ast}X)$ is the amplitude then $P\in\Psi^n_c$ implies that $K=\{(x,y)\in X\times X| p(x,y,\xi)\neq0\}$ is compact. 
Using the fact that the Riemannian metric on $T^{\ast}X$ is $G$-invariant and the measure on $X$ is $G$-invariant, we calculate the amplitude for $\mathrm{Av}_G(P)$ as $$\int_G p(g^{-1}x,g^{-1}y,\xi_{g^{-1}x})\dif g$$ which is of order $n$ because the integral is taken over a set $\displaystyle\{g\in G| (g^{-1}x, g^{-1}y)\in K \}$ which is compact.

(2) Let $P\in\Psi_{G,p}^n(X)$ be elliptic and $c\in C_c^{\infty}(X)$ be a cutoff function for $X$. Cover $X$ by finitely many bounded open balls $\{U_i\}_{i=1}^N$ such that  $\supp(c)\subset \cup_{i=1}^N U_i.$  Let $\{a_i\}_{i=1}^N$ be a partition of unity subordinate to the finite cover. 
Since $P$ is elliptic, which implies that for any compact $K\subset X$, there exists a constant $C_K$ such that $|\sigma_P|\ge C(1+|\xi|)^n$ uniformly for all $|\xi|\ge C_K$, 
then there exist $Q_i\in\Psi_c^{-n}(U_i), 1\le i\le N$ so that 
$$PQ_i-a_i=R_{1,i}, Q_iP-a_i=R_{2,i}$$ 
are elements in $\Psi_c^{-\infty}(U_i)$.
Extend the elements in $\Psi_c^{\ast}(U_i)$ to $\Psi_c^{\ast}(X)$ and then 
$$c\sum_{i=1}^NQ_iP-c=c\sum_{i=1}^NR_{2,i}.$$
Since $\displaystyle\sum_{i=1}^NQ_i\in\Psi_c^{-n}(X)$, $\displaystyle\sum_{i=1}^NR_{2,i}\in\Psi_c^{-\infty}(X)$, 
we set $\displaystyle Q=\int_Gg(c\sum_{i=1}^N Q_i)\dif g\in\Psi_{G,p}^{-n}(X)\text{ and }S=\int_Gg(c\sum_{i=1}^N R_{2,i})\dif g\in\Psi_{G,p}^{-\infty}(X).$
Then 
\begin{align*} 
QP&=\int_Gg(c\sum_{i=1}^NQ_i)P\dif g=\int_G g(c)g(\sum_{i=1}^NQ_iP)\dif g\\
&=\int_G g(c) \dif g+\int_G g(c)g(\sum_{i=1}^nR_{2,i})\dif g=I+S.
\end{align*}
Similarly, there is a $\displaystyle Q'=\int_Gg(\sum_{i=1}^N Q_ic)\dif g\in\Psi_{G,p}^{-n}(X)$ and $S'\in\Psi_{G,p}^{-\infty}(X)$ so that $PQ'-I=S'.$ 
Since $Q'+SQ'-Q=(1+S)Q'-Q=Q(PQ'-1)=QS'$, then $Q'-Q\in\Psi^{-\infty}_{G,p}(X)$. 
Hence there are $S_1, S_2=S\in\Psi^{-\infty}_{G,p}(X)$ such that $PQ=1+S_1, QP=1+S_2.$

(3) If $S\in\Psi^{-\infty}_{G,p}(X)$, then $cS\in\Psi^{-\infty}_c(X)$. 

We know that $cS\in\Psi^{-\infty}_c(X)$ is equivalent to the fact that $K_{cS}(x,y)$ is smooth and compactly supported in $X\times X$. Therefore the statement follows from the fact that $$\displaystyle K_S(x,y)=K_{\mathrm{Av}_G(cS)}(x,y)=\int_GK_{cS}(g^{-1}x,g^{-1}y)\dif g$$ and the fact that the integral vanishes outside a compact set in $G$.
\end{proof}

\section{The $G$-trace and the $L^2$-index.}
 
When $X$ is compact and when $G$ is trivial, the dimensions of $\Ker P_0$ and $\Ker P_0^{\ast}$ are finite and their difference defines the index of $P$. 
In our case we measure the size of $\Ker P_0$ or $\Ker P_0^{\ast}$ by a real number in terms of von Neumann dimension. 
An $L^2$-index of $P$, analogous to the Fredholm index is defined, motivated by the $L^2$-index defined by Atiyah \cite{Atiyah:1976} and modified upon \cite{Connes:1982}.

\subsection{The $G$-trace of operators on $X$.}

Recall that a bounded operator $T$ on a Hilbert space $H$ is of {\em trace class} if $\displaystyle\sum_{i=1}^{\infty}|<|T|e_i, e_i>|<\infty$, where $\{e_i\}_{i=1}^{\infty}$ is an orthonormal basis of the Hilbert space and its {\em trace} calculated by 
$$\displaystyle\tr(T)=\sum_{i=1}^{\infty}<Te_i, e_i>$$ 
is independent of the orthonormal basis.

\begin{definition}\label{Gtrace}
A bounded operator $S: L^2(X, E)\rightarrow L^2(X,E)$, which commutes with the action of $G$, is of {\em $G$-trace class} if $\phi |S|\psi$ is of trace class for all $\phi, \psi\in C^{\infty}_c(X)$. 

If $S$ is a $G$-trace class operator, we calculate the $G$-trace by the formula 
\begin{equation}\tr_G(S)=\tr(c_1Sc_2),\label{deftrace}\end{equation} where $c_1, c_2\in C_c^{\infty}(X)$ are nonnegative, satisfying $c_1c_2=c$ for some cutoff function $c$ on $X.$ 
\end{definition}

\begin{remark}
When $G$ is discrete, Definition \ref{Gtrace} is essentially the definition of the $G$-trace class operator appearing in \cite{Atiyah:1976}. 
Similarly to Lemma 4.9 of \cite{Atiyah:1976}, we prove in the following proposition that $\tr_G$ is well defined, that is, $\tr_G$ is independent of the choice of $c_1, c_2$ and $c.$ 
\end{remark}

\begin{proposition}
Let $S$ (bounded, $G$-invariant and positive) be a $G$-trace class operator and $c_1, c_2, d_1, d_2\in C^{\infty}_c(X)$ be nonnegative functions satisfying $\displaystyle\int_G c_1(g^{-1}x)c_2(g^{-1}x) \dif g=1$ and $\displaystyle\int_G d_1(g^{-1}x)d_2(g^{-1}x) \dif g=1$, which means that $c=c_1c_2$ and $d=d_1d_2$ are cutoff functions on $X.$ Then $\tr(c_1Sc_2)=\tr(d_1Sd_2).$
\end{proposition}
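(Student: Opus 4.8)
The plan is to show that the trace $\tr(c_1Sc_2)$ does not change under two elementary moves, from which the general case follows by chaining them. The first move is to exploit the cyclic and $G$-invariance properties of the ordinary trace: since $S$ commutes with $G$ and the Haar measure is bi-invariant, for any nonnegative $a,b\in C_c^\infty(X)$ one has $\tr(aSb)=\tr(g a g^{-1} \cdot S \cdot g b g^{-1})$ for each $g\in G$, where $gag^{-1}$ denotes the translated function $x\mapsto a(g^{-1}x)$; this uses unitarity of the $G$-action on $L^2(X,E)$ together with $gSg^{-1}=S$. Moreover, because $\phi S\psi$ is genuinely trace class for compactly supported $\phi,\psi$, we may commute multiplication operators under the trace, e.g.\ $\tr(aSb)=\tr(\sqrt{ab}\, S\sqrt{ab})$ when $a,b\geq 0$ (cyclicity applied to the bounded multiplication operators $\sqrt{a/b}$ etc., handled carefully on the support). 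So without loss of generality it suffices to compare $\tr(\sqrt{c}\,S\sqrt{c})$ and $\tr(\sqrt{d}\,S\sqrt{d})$ for two cutoff functions $c,d$.

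The second, main, step is the averaging argument in the spirit of Atiyah's Lemma 4.9. Write $c, d$ for the two cutoff functions and consider the function $F(g,x) = c(x)\, d(g^{-1}x)$ on $G\times X$, which is compactly supported in $G$ for each $x$ (properness) and integrates over $G$ to $c(x)$ since $d$ is a cutoff. The key computation is to expand $\tr(\sqrt c\, S \sqrt c)$ using the defining property $\int_G d(g^{-1}x)\,\dif g \equiv 1$ inserted in the middle, obtaining a double expression over $G$, and then to move the $G$-translation off of $d$ and onto $S$ and the two copies of $\sqrt c$, using $G$-invariance of $S$ and of the measure. After the change of variables $g\mapsto g^{-1}$ (legitimate since $\dif(g^{-1})=\dif g$) one recognizes the mirror-image expression with the roles of $c$ and $d$ interchanged, giving $\tr(\sqrt c\, S\sqrt c)=\tr(\sqrt d\, S\sqrt d)$. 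Throughout, one must justify interchanging $\int_G$ with $\tr$: this is where positivity of $S$ is used, via Tonelli/the monotone convergence theorem applied to the positive integrand $\langle g^{-1}\sqrt{d(g^{-1}\cdot)}\sqrt{S} e_i, \ldots\rangle$-type terms, so that everything may be reordered freely.

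I expect the main obstacle to be precisely this Fubini-type interchange of the Haar integral with the operator trace, since $S$ is only assumed bounded (not smoothing), so one cannot appeal to a continuous Schwartz kernel. The resolution is to factor $S = \sqrt{S}\sqrt{S}$ (using $S\geq 0$), write each trace as an $L^2$-norm-squared of Hilbert--Schmidt-type operators $\sqrt{S}\sqrt{c}$ and $\sqrt{S}\sqrt{d}$ — which are Hilbert--Schmidt because $\sqrt c\, S\sqrt c$ and $\sqrt d\, S \sqrt d$ are trace class by the $G$-trace-class hypothesis applied with $\phi=\psi=\sqrt c$ — and then carry out the averaging at the level of these Hilbert--Schmidt operators where the relevant sums/integrals are of nonnegative terms and Tonelli applies cleanly. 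A secondary technical point is the reduction $\tr(aSb)=\tr(\sqrt{ab}\,S\sqrt{ab})$: here one writes $aSb = (\sqrt a\, u)\, S\, (v\sqrt b)$ with bounded $u,v$ supported where $a,b$ do not vanish, and uses that $S$ maps into the relevant subspaces so the cyclic permutation is valid; alternatively one can avoid this by arguing directly with the original $c_1,c_2,d_1,d_2$ and the double average over $G$, at the cost of slightly messier bookkeeping. Either way the heart of the matter is the symmetry between $c$ and $d$ exposed by inserting each cutoff's defining integral into the other's trace expression.
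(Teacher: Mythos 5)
Your proposal is correct and follows essentially the same route as the paper's proof (which is modeled on Atiyah's Lemma 4.9): insert the identity $\int_G [g\cdot d]\,\dif g=1$ into the trace, interchange trace and Haar integral, and use cyclicity of $\tr$, the $G$-invariance of $S$, and the invariance of $\dif g$ under $g\mapsto g^{-1}$ to transfer the translation from $d$ onto $c$. The paper works directly with $c_1,c_2,d_1,d_2$ (so the preliminary reduction to $\tr(\sqrt{ab}\,S\sqrt{ab})$ is unnecessary) and justifies the interchange of $\tr$ with $\int_G$ by restricting to the compact set $K=\{g:\supp(g\cdot d)\cap\supp c\neq\emptyset\}$ rather than by your positivity/Tonelli argument, but these are minor variations on the same argument.
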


\begin{proof}
Let $K=\{g\in G|\supp(g\cdot (d_1d_2))\cap\supp c\neq\emptyset\}$ and then $K$ is compact by the properness of the group action. Hence,
\begin{align*}
\tr(c_1Sc_2)&=\tr(\int_G[g\cdot(d_1d_2)]c_1Sc_2\dif g)=\tr(\int_K[g\cdot(d_1d_2)]c_1Sc_2\dif g)\\
&=\int_K\tr([g\cdot d_1] [g\cdot d_2] c_1 Sc_2)\dif g=\int_K\tr(c_1 [g\cdot d_1]D [g\cdot d_2] c_2)\dif g\\
&=\int_K\tr([g^{-1}\cdot c_1] d_1 Sd_2 [g^{-1}\cdot c_2])\dif g=\tr([\int_Gg(c_1c_2)\dif g] d_1Sd_2)=\tr(d_1Dd_2).
\end{align*}
\end{proof}

Using the fact that $\tr$ is a well-defined trace on compactly supported operators on $X$, it is easy to see that $\tr_G$ is linear, faithful, normal and semi-finite. The tracial property of $\tr_G$ is proved in the following proposition together with some other properties of $\tr_G$.

\begin{proposition}\label{propositions of trace}
\begin{enumerate}
\item A properly supported smoothing operator $A\in\Psi_{G,p}^{-\infty}$ is of $G$-trace class. If $K_A: X\times X\rightarrow \Hom E$ is the kernel of $A$, then its $G$-trace is calculated by 
\begin{equation}\tr_G(A)=\int_X c(x)\Tr K_A(x,x)\dif x, \end{equation}
where $c$ is a cutoff function and $\Tr$ is the matrix trace of $\Hom E$. In fact, this formula holds for all $G$-invariant operators having smooth integral kernel.
\item If $A\in\Psi^{\ast}_{G}$ is of $G$-trace class, so is $A^{\ast}.$
\item If $A\in\Psi^{\ast}_{G}$ is of $G$-trace class and $B\in\Psi^{\ast}_{G}$ is bounded, then $AB$ and $BA$ are of $G$-trace class. 
\item If $AB$ and $BA$ are of $G$-trace class, then $\tr_G(AB)=\tr_G(BA)$ 
\end{enumerate}
\end{proposition}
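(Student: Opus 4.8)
The plan is to verify each of the four claims by reducing to the ordinary trace on compactly supported operators, where the analogous facts are classical.

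For part (1), I would start from the observation in Proposition~\ref{PDOprop}(3) that a properly supported smoothing operator $A$ has a smooth, properly supported Schwartz kernel $K_A(x,y)$. Pick nonnegative $c_1,c_2\in C_c^\infty(X)$ with $c_1c_2=c$ a cutoff function; then $c_1Ac_2$ has kernel $c_1(x)K_A(x,y)c_2(y)$, which is smooth and compactly supported, hence trace class, so $A$ is of $G$-trace class. To get the integral formula, I would first reduce to the case where $A$ itself has compactly supported kernel (using $\mathrm{Av}_G(cA)$ and the $G$-invariance $K_A(x,y)=K_A(gx,gy)$), apply the standard fact that the trace of a trace-class operator with continuous kernel is $\int \Tr K(x,x)\,\dif x$, and then unfold:
\begin{align*}
\tr_G(A)&=\tr(c_1Ac_2)=\int_X c_1(x)\,\Tr\!\big(K_A(x,x)\big)\,c_2(x)\,\dif x\\
&=\int_X c(x)\,\Tr K_A(x,x)\,\dif x.
\end{align*}
The same computation works verbatim for any $G$-invariant operator with smooth integral kernel, since smoothness plus proper support of the cutoffs is all that was used. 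For the independence of the cutoff, I would cite the preceding proposition.

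For part (2), since $\phi|S|\psi$ being trace class for all $\phi,\psi\in C_c^\infty(X)$ is the defining condition, and $|S^\ast|=|S^\ast|$ relates to $|S|$ via the polar decomposition $S=U|S|$ with $S^\ast = |S|U^\ast$ and $|S^\ast| = U|S|U^\ast$, I would observe that $\phi|S^\ast|\psi = (\phi U)|S|(U^\ast\psi)$; one then uses that $\phi U$ and $U^\ast\psi$ are bounded and that trace class is a two-sided ideal to conclude. (Here $U$ commutes with $G$ because $S$ does, but that is not even needed for this ideal argument.) For part (3), I would again use that trace class operators form a two-sided ideal in the bounded operators on $L^2(X,E)$: writing, for $\phi,\psi\in C_c^\infty(X)$, the product $\phi(AB)\psi$ and inserting a third cutoff $\chi\in C_c^\infty(X)$ that is $1$ on a large enough compact set, I would massage $\phi AB\psi$ into a product of a trace-class operator (coming from $A$) with bounded operators (coming from $B$ and the cutoffs); the only subtlety is that $B$ need not be properly supported, so I must be careful to localize $A$'s trace-class estimate before multiplying. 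Concretely, since $A$ is $G$-trace class, $\phi|A|\chi$ is trace class; then $\phi A\chi$, $\chi A\psi$, etc., are trace class, and I combine these with $\chi B\psi$ bounded; the $G$-invariance of $A$ and $B$ lets me arrange the cutoffs compatibly.

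Part (4) is the main obstacle, the tracial identity $\tr_G(AB)=\tr_G(BA)$. The strategy is to exploit the cyclicity of the ordinary trace together with the averaging trick used in the proof that $\tr_G$ is well defined. Choose a cutoff $c$ and write $\tr_G(AB)=\tr(c_1ABc_2)$ with $c_1c_2=c$. The difficulty is that $A$ and $B$ are not compactly supported, so $c_1ABc_2$ is not literally in the trace-class algebra where cyclicity is immediate; I would insert a partition of the identity built from translates of a cutoff function. Specifically, using $\int_G g\cdot(d_1d_2)\,\dif g = 1$ and the properness of the action (so that only finitely many — or a compact set of — translates meet any fixed compact set), I would write $\tr(c_1ABc_2)$ as an integral over $g\in G$ of $\tr$ of a compactly supported operator, then apply ordinary cyclicity to each such term, then use $G$-invariance of $A$ and $B$ to shift the $g$ back out, and finally reassemble to get $\tr(d_1'BAd_2')$ for a new cutoff — exactly the bookkeeping appearing in the proof of well-definedness above. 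The key facts making this go through are: $G$-trace class is preserved under the relevant multiplications (part (3)), the kernels involved are supported near the diagonal once enough cutoffs are inserted, and $\tr_G$ is independent of the cutoff. The careful point I expect to have to nail down is an interchange of the $G$-integral with the trace, which is justified because after cutting down by compactly supported functions everything is trace class with uniformly controlled trace norm over the compact set of relevant $g$'s.
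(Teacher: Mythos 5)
Your parts (1) and (3) follow the paper's route closely (for (3) the paper likewise localizes using the proper support of one factor together with the ideal property of trace-class operators, and then splits a general $A\in\Psi^{\ast}_G$ as $A_1+A_2$ with $A_1$ properly supported and $A_2$ having smooth kernel). Two issues remain, one minor and one substantive. The minor one concerns (2) (and the step ``$\phi|A|\chi$ trace class $\Rightarrow\phi A\chi$ trace class'' inside your (3)): writing $\phi|S^{\ast}|\psi=(\phi U)\,|S|\,(U^{\ast}\psi)$ and invoking the two-sided ideal property does not work, because the middle factor $|S|$ is not itself trace class --- only $\phi'|S|\psi'$ with multiplication operators directly adjacent to $|S|$ is, and $\phi U$ is not a multiplication operator. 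The paper avoids the polar decomposition entirely: $\phi A^{\ast}\psi=(\bar\psi A\bar\phi)^{\ast}$, and the adjoint of a trace-class operator is trace class.

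The substantive gap is in (4). Your interchange of $\tr$ with $\int_G$ rests on the set of relevant group elements being compact, i.e.\ on $\{g\in G:\ c_1A\,(g\cdot d)\neq0\}$ being compact. That holds precisely when $A$ (or $B$) is properly supported, so that the kernel of $c_1A$ has compact support in the second variable; it fails for a general $G$-trace class operator, whose kernel need not vanish (or decay in any controlled way) off the diagonal, so the ``compact set of relevant $g$'s'' on which you want uniform trace-norm control is all of $G$. The case you are missing is exactly the one the index theory needs: operators such as $P_{\Ker P_0}$ are $G$-invariant with smooth kernel but are \emph{not} properly supported. The paper treats this case by a separate, direct computation: using the slice decomposition $X=\cup_i\, G\times_{K_i}S_i$ and a $G$-invariant partition of unity $\{\alpha_i^2\}$, it writes $\tr_G(AB)=\int_X\int_X c(x)\Tr(K_A(x,y)K_B(y,x))\,\dif y\,\dif x$ and manipulates this double integral directly, using $K(gx,gy)=K(x,y)$ and unimodularity to transfer the cutoff from the $x$-variable to the $y$-variable (the continuous analogue of Atiyah's block-matrix argument over a fundamental domain). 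The general statement then follows by writing $A=A_1+A_2$, $B=B_1+B_2$ with properly supported and smooth-kernel parts and combining the two special cases. Without this second mechanism your proof of (4) establishes cyclicity only in the properly supported case.
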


\begin{proof}
Let $\phi,\psi\in C^{\infty}_c(X)$ and let $\{\alpha_i^2\}_{i=1}^N$ be the $G$-invariant partition of unity in Proposition \ref{PDOprop}.

(1) Proposition \ref{PDOprop} (3) shows that $A\in\Psi_{G,p}^{-\infty}$ has smooth kernel.
Then $K_{\phi A\psi}(x,y)=\phi(x)K_A(x,y)\psi(y),$ is smooth and compactly supported, which means that $\phi A\psi$ is of trace class.
The integral formula for smoothing operators is classical. A proof may be found at \cite{Shubin} Section 2.21. 

(2) Because $\bar{\psi}A\bar{\phi}$ has finite trace by definition, then $\phi A^{\ast}\psi=(\bar{\psi}A\bar{\phi})^{\ast}$ is of trace class.

(3) Assume we have a $G$-trace class operator $A\in\Psi^{\ast}_{G,p}$. Since $\supp\psi$ is compact then as $A$ is properly supported, there is a compact set $K$ so that $\supp A\psi\subset K$. Choose $\eta, \zeta\in C_c^{\infty}(X)$ with $K\subset\supp\eta$ and $\eta\zeta=\eta$. 
Then $\eta A\psi=A\psi$ and for a bounded $B\in\Psi^{\ast}_G$, we have that $\phi BA\psi=\phi B\zeta\eta A\psi=(\phi B\zeta)(\eta A\psi)$. 
Since $\phi B\zeta$ is bounded operator with compact support and $\eta A\psi$ is trace class operator then their product is also a trace-class operator. So $BA$ is of $G$-trace class. $AB$ is of $G$-trace class because $B^{\ast}A^{\ast}$ is of $G$-trace class.

If $A\in\Psi^{\ast}_G$, then we have $A=A_1+A_2$ so that $A_1\in\Psi^{\ast}_{G,p}$ and $A_2$ has smooth kernel (which follows from a classical statement saying that the Schwartz kernel is smooth off the diagonal). 
Then the statement follows from the fact that $\phi A_1\psi$ has  smooth, compactly supported Schwartz kernel.

(4) We first prove a special case when $AB$ and $BA$ have smooth integral kernels.
Use the slice theorem (\ref{local structure}) to get $\displaystyle\{G\times_{K_i}S_i=G(S_i)\}_{i=1}^N$, $G$-invariant tubular open sets covering $X$. Then there exist $G$-invariant maps $\alpha_i: X\rightarrow [0,1]$ with $\supp\alpha_i\subset G(S_i)$ such that $\displaystyle\sum_{i=1}^N\alpha_i^2=1$. In fact, let $\tilde{\alpha_i}^2$ be a partition of unity of $X/G$ subordinate to the open sets $G(S_i)/G$. Lift $\tilde{\alpha_i}$ to $\alpha_i$ on $X$, then $\{\alpha_i^2\}$ is a $G$-invariant partition of unity of $X.$
Then:
  \begin{align*}  &\tr_G(AB) =\int_X\int_X c(x)\Tr(K_A(x,y)K_B(y,x))\mathrm{d}y\mathrm{d}x \\
      =& \sum_{i,j}\int_{G\times_{K_i}S_i}\int_{G\times_{K_j}S_j}\alpha_i^2(x)\alpha_j^2(y)c(x)\Tr(K_A(x, y)K_B(y, x))\mathrm{d}y\mathrm{d}x\\
      =&\sum_{i,j}\frac{1}{\mu(K_i)\mu(K_j)}\int_{S_i}\int_{S_j}\alpha_i^2(\bar{s})\alpha_j^2(\bar{t})\int_G\int_Gc(\overline{ht})\Tr(K_A(\overline{gs},\overline{ht})K_B(\overline{ht},\overline{gs}))\mathrm{d}g\mathrm{d}s\mathrm{d}h\mathrm{d}t\\
      =&\sum_{i,j}\frac{1}{\mu(K_i)\mu(K_j)}\int_{S_i}\int_{S_j}\alpha_i^2(\bar{s})\alpha_j^2(\bar{t})\int_{G}\Tr(K_B(\overline{ht},\bar{s})K_A(\bar{s},\overline{ht}))\mathrm{d}g\mathrm{d}s\mathrm{d}h\mathrm{d}t=\tr_G(BA).
  \end{align*}
Note that
in the third equality, $\bar{gs}\doteq (g,s)K_i=x\in G\times_{K_i}S_i$ and $\bar{ht}\doteq(h,t)K_j=y\in G\times_{K_j}S_j$ and by definition $\alpha_i(\bar{s})=\alpha_i(\bar{gs}), \alpha_j(\bar{t})=\alpha_j(\bar{ht})$. 
Also, we have used (\ref{kernel}), $\mathrm{d}h^{-1}=\mathrm{d}h, \mathrm{d}(h^{-1}g)=\mathrm{d}g,$ and change of variable in the fourth equality.

If either $A$ or $B$ are properly supported, (say $A$), then $\displaystyle\tr_G(AB)=\tr(c_1ABc_2)=\tr(\int_G c_1A g\cdot(c_1c_2) Bc_2)$. So the set $\{g\in G | c_1Ag\cdot c_1\neq0\}$ is compact in $K$, which allows us to interchange $\tr$ and $\displaystyle\int_K$, and to use tracial property of $\tr$ and $G$-invariance of $A$ and $B$ to prove $\tr_G(AB)=\tr_G(BA)$

In general let $A=A_1+A_2$ and $B=B_1+B_2$ where $A_1, B_1$ are properly supported and $A_2, B_2$ are bounded and have smooth kernel. 
Then $\tr_G(AB)=\tr_G(BA)$ using the special cases discussed above. 
\end{proof}

\begin{remark}
Let $S$ be a bounded $G$-invariant operator with smooth integral kernel and define $S_i\doteq\alpha_i S\alpha_i\in\Psi_c^{-\infty}(X;E,E)$. 
Then $\alpha_i^2S$ is of $G$-trace class by Proposition \ref{propositions of trace} (3). 
We may calculate $\tr_G(S)$ as follows,
\begin{align*}
\tr_G(S)=&\tr_G(\sum_{i=1}^N\alpha_i^2S)=\sum_{i=1}^N\int_{G\times_{K_i}S_i}\alpha_i(x) c(x)\Tr K_S(x,x)\alpha_i(x)\mathrm{d}x \\
=&\sum_{i=1}^N\int_{G\times_{K_i}S_i}c(x)\Tr K_{S_i}(x,x)\mathrm{d}x\\
=&\sum_{i=1}^N\mu(K_i)^{-1}\int_{G\times S_i}c((g,s))\Tr K_{S_i}((g,s),(g,s))\mathrm{d}g\mathrm{d}s \\
=&\sum_{i=1}^N\mu(K_i)^{-1}\int_{G\times S_i}c((g,s))\Tr K_{S_i}((e,s),(e,s))\mathrm{d}g\mathrm{d}s\\
=&\sum_{i=1}^N\mu(K_i)^{-1}\int_{S_i}\Tr K_{S_i}(s,s)\mathrm{d}s.
\end{align*}

The above trace formula  coincides with the trace formulas in the special cases. 
\begin{enumerate}
\item If the action is free and cocompact, then $X=G\times U$, and for a bounded positive self-adjoint operator $S$ with smooth kernel, we obtain $$\displaystyle\tr_G(S)=\int_U \Tr K_S(x,x)\mathrm{d}x.$$ 
\item For a homogeneous space of a Lie group $X=G/H$, and for $S\in\Psi^{-\infty}_{G,p}(X)$, we have $\tr_G(S)=K_S(e,e),$ where $e$ is the group identity. 
\end{enumerate}
\end{remark}

\begin{proposition}\label{fin ker}
If $P_0\in\Psi_{G,p}^m$ is an elliptic operator, then $P_{\Ker P_0}\in\Psi_G^{-\infty}$ is of $G$-trace class.\end{proposition}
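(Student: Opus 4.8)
The plan is to show that $P_{\Ker P_0}$, the orthogonal projection onto $\Ker P_0$, is itself a $G$-invariant smoothing operator which is properly supported, and then invoke Proposition \ref{propositions of trace}(1) to conclude it is of $G$-trace class. The $G$-invariance of $P_{\Ker P_0}$ is immediate: since $P_0$ commutes with the $G$-action, $\Ker P_0$ is a $G$-invariant closed subspace of $L^2(X,E_0)$, hence the projection onto it commutes with the unitary $G$-action. So the real content is the regularity statement $P_{\Ker P_0}\in\Psi^{-\infty}_{G,p}$.

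First I would use ellipticity (Definition \ref{defell}) and Proposition \ref{PDOprop}(2) to produce a parametrix $Q\in\Psi^{-m}_{G,p}$ with $QP_0-I=S_2\in\Psi^{-\infty}_{G,p}$. Restricting to $\Ker P_0$: for $u\in\Ker P_0$ we have $u=-S_2u$, so $P_{\Ker P_0}=-S_2P_{\Ker P_0}$, which already shows $P_{\Ker P_0}$ has smooth Schwartz kernel (composing a smoothing operator with a bounded operator smooths the kernel in the first variable; a symmetric argument using $P_0^{\ast}$'s parametrix, or using that $P_{\Ker P_0}=P_{\Ker P_0}^{\ast}=-P_{\Ker P_0}S_2^{\ast}$, handles the second variable, so $K_{P_{\Ker P_0}}(x,y)$ is jointly smooth). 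This is the standard elliptic-regularity argument that kernels of elliptic operators consist of smooth sections. The key point to make carefully is that $S_2$ being properly supported forces $P_{\Ker P_0}$ to be properly supported as well: from $P_{\Ker P_0}=-S_2P_{\Ker P_0}=-S_2 P_{\Ker P_0} S_2^{\ast}$ (using self-adjointness and idempotence), the support of the kernel is contained in $\supp K_{S_2}\circ(\text{anything})\circ\supp K_{S_2^{\ast}}$; since $\supp K_{S_2}\cap(K\times X)$ and $\supp K_{S_2}\cap(X\times K)$ are compact for every compact $K$, and composition of proper relations is proper, $P_{\Ker P_0}$ inherits proper support. One should double-check that the a priori bounded operator $P_{\Ker P_0}$ can legitimately be composed with $S_2$ and $S_2^{\ast}$ and that the resulting kernel identity holds in the distributional sense — this is routine since $S_2$ maps $L^2$ into smooth sections.

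Once $P_{\Ker P_0}\in\Psi^{-\infty}_{G,p}$ is established, Proposition \ref{propositions of trace}(1) applies verbatim: a properly supported $G$-invariant smoothing operator is of $G$-trace class, with $\tr_G(P_{\Ker P_0})=\int_X c(x)\Tr K_{P_{\Ker P_0}}(x,x)\,\dif x$ for a cutoff function $c$. (The same argument applied to $P_0^{\ast}$ shows $P_{\Ker P_0^{\ast}}$ is of $G$-trace class, so the $L^2$-index $\ind P=\tr_G P_{\Ker P_0}-\tr_G P_{\Ker P_0^{\ast}}$ is a well-defined real number, which is presumably the point of stating this proposition.)

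I expect the main obstacle to be the bookkeeping around proper support of $P_{\Ker P_0}$: one must argue that an idempotent sandwiched between a properly supported smoothing operator and its adjoint is again properly supported, which requires knowing that the composition of two properly supported operators (or of a properly supported operator with an arbitrary bounded one, on the correct side) is properly supported — a fact that is intuitively clear from the compactness of $\supp K_{S_2}\cap(K\times X)$ but should be stated. The smoothness part is entirely standard elliptic regularity and should be dispatched quickly by citing the parametrix construction in Proposition \ref{PDOprop}(2).
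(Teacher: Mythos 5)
Your opening move is exactly the paper's: take a parametrix $Q$ with $1-QP_0=S\in\Psi^{-\infty}_{G,p}$ and apply it to the projection to get $P_{\Ker P_0}=SP_{\Ker P_0}$ (up to your sign convention), which gives smoothness of the kernel. But your proof then hinges on a claim that is false in general: that $P_{\Ker P_0}$ is \emph{properly supported}, so that Proposition \ref{propositions of trace}(1) applies to it directly. The factorization $P_{\Ker P_0}=S\,P_{\Ker P_0}\,S^{\ast}$ does not force proper support, because the middle factor $P_{\Ker P_0}$ carries no support constraint whatsoever: the support of the composed kernel is contained in $\supp K_{S}\circ\supp K_{P_{\Ker P_0}}\circ\supp K_{S^{\ast}}$, and if the middle support is all of $X\times X$ this set is again essentially all of $X\times X$. ``Composition of proper relations is proper'' only helps if all three factors are proper relations, which is precisely what you do not know. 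And indeed the conclusion is false: for $X=\mathbb{H}^2$ with a cocompact Fuchsian group and $P_0=\bar\partial$ on a line bundle, $P_{\Ker P_0}$ is the Bergman projection, whose kernel is real-analytic and nowhere vanishing off the diagonal, hence not properly supported. This is why the proposition itself only asserts $P_{\Ker P_0}\in\Psi^{-\infty}_G$ and not $\Psi^{-\infty}_{G,p}$.

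The repair is short and is what the paper does: you do not need $P_{\Ker P_0}$ itself to satisfy the hypotheses of Proposition \ref{propositions of trace}(1). Instead, $S\in\Psi^{-\infty}_{G,p}$ is of $G$-trace class by part (1), and $P_{\Ker P_0}$ is a bounded $G$-invariant operator, so the product $SP_{\Ker P_0}=P_{\Ker P_0}$ is of $G$-trace class by part (3) (``$A$ of $G$-trace class, $B$ bounded $\Rightarrow AB,\,BA$ of $G$-trace class''). With that substitution your argument closes; the detour through proper support should be deleted. (Minor aside: with your convention $S_2=QP_0-I$ the sandwich identity reads $P_{\Ker P_0}=+S_2P_{\Ker P_0}S_2^{\ast}$, not $-S_2P_{\Ker P_0}S_2^{\ast}$, though this does not affect the discussion above.)
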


\begin{proof}
By Proposition \ref{PDOprop}, there is a $Q\in\Psi_{G,p}^{-m}$ so that $1-QP_0=S\in\Psi^{-\infty}_{G,p}.$ 
Then apply it to $P_{\Ker P_0}$ and get $P_{\Ker P_0}=SP_{\Ker P_0}\in\Psi^{-\infty}_G$.
The statement is proved using (1) and (3) of Proposition \ref{propositions of trace}.
\end{proof}

\begin{remark} \label{trace intuition}
Let $\{\alpha_i^2\}_{i=1}^N$ be the $G$-invariant partition of unity in the proof of Proposition \ref{propositions of trace} (4). 
Then by the same property and for any bounded operator $T\in \Psi^{-\infty}_G$, we have
$$\tr_GT=\sum_{i=1}^N\tr_G(\alpha_iT\alpha_i)$$ 
where every summand $\alpha_iT\alpha_i$ is $G$-invariant and restricts to a slice 
$G\times_{K_i} S_i$ in $X.$ 

The action of $G$ on the vector bundle $E$ is induced by the action of its subgroup $K_i$ on $V\doteq E|_{S_i}$, the restriction of the bundle $E$ over a subset $\{(e,s)K_i| s\in S_i\}$ of $X.$
 \[ \begin{CD}
V=E|_{S_i} @>k\in K_i>> V=E|_{S_i} \\
@VVV              @VVV\\
\{e\}\times S_i       @>k\in K_i>>  \{e\}\times S_i
\end{CD} \]
Then we have the identification of the Hilbert spaces $L^2(G\times_{K_i} S_i, E)=(L^2(G)\otimes L^2(S_i, V))^{K_i}$, which consists of the elements of $L^2(G)\otimes L^2(S, V)$ invariant under the action of $K_i$, where $k\in K_i$ acts by 
$$k(f(g), h(s))=(f(gk^{-1}), k\cdot h(s)), g\in G, s\in S_i, f\in L^2(G), h\in L^2(S_i,V).$$
The $G$-invariance of $\ker P_0$ implies that $\alpha_iP_{\ker P_0}\alpha_i$ is an element of 
\begin{equation}\mathcal{R}(L^2(G))\otimes\mathcal{B}(L^2(S_i,V)),\label{decom}\end{equation}
and this element commutes with the action of the group $K_i$ on $\mathcal{R}(L^2(G))\otimes\mathcal{B}(L^2(S_i,V)).$  
Here $\mathcal{R}(L^2(G))$ is the weak closure of the right regular representation of $G$ ($L^1(G)$ more precisely) represented on $L^2(G).$
On this set there is a natural von Neumann trace determined by 
$$\tau(R(f)^{\ast}R(f))=\int_G|f(g)|^2\mathrm{d}g,$$ 
where 
$f\in L^2(G)\cap L^1(G)$
and 
$\displaystyle R(f)=\int_Gf(g)R(g)\mathrm{d}g$. 
Here $R(g)$ is the right regular representation of $g\in G$ on $L^2(G)$.
Also $\mathcal{B}(L^2(S_i,V))$ also has a subset where an operator trace $\tr$ can be defined.
There is a natural normal, semi-finite and faithful trace defined on $\mathcal{R}(L^2(G))\otimes\mathcal{B}(L^2(S_i,V))$ given by $\tau\otimes\tr$ on algebraic tensors. Refer to \cite{Perez:2008} Section 2 for a detailed description.

This trace coincides with the $G$-trace in Definition \ref{Gtrace} on the set of bounded $G$-invariant operators with smooth kernel. 
In fact, by a partition of unity argument, such an operator is finite sum of operators of form $S=A\otimes B\in \mathcal{R}(L^2(G))\otimes\mathcal{B}(L^2(S_i,V))$, which commutes with the action of $K_i$, where $A$ and $B$ have smooth kernel.
In \cite{Connes:1982}, it has been shown that $\tau(A)=K_A(e,e)$. Let $d\in C^{\infty}_c(G)$ be any cutoff function for $G.$ Then $\displaystyle\tau(A)=\int_G d(g)K_A(g,g)\mathrm{d}g.$ 
Hence, 
\begin{align*} 
\tau(A)\tr(B)=&\int_Gd(g)K_A(g,g)\mathrm{d}g\int_{S_i}\mathrm{Tr}K_B(s,s)\mathrm{d}s\\
                     &=\int_{G\times S_i}\frac{1}{\mu(K_i)}c((g,s))\mathrm{Tr}K_S((g,s),(g,s))\mathrm{d}g\mathrm{d}s\\
                     &=\int_{G\times_{K_i}S_i}c(x)\mathrm{Tr}K_S(x,x)\mathrm{d}x.
\end{align*}
Therefore we have proved the following proposition.
\end{remark}

\begin{proposition}\label{vndim}
On $\Psi^{-\infty}_{G,p}(X; E, E)$, the $G$-trace equals the natural von Neumann trace on the von Neumann algebra $\mathcal{R}(L^2(X,E))$, the weak closure of all the natural bounded operators on $L^2(X,E)$ which commute with the action of $G$.  
The $L^2$-index is the difference of the von Neumann trace of $P_{\Ker P_0}$ and $P_{\Ker P_0^{\ast}}.$
\end{proposition}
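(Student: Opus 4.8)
The plan is to reduce the statement to the slice-wise description already set up in Remark~\ref{trace intuition}, and then to assemble the global trace from its restrictions to the slices by means of a partition of unity. First I would fix the $G$-invariant partition of unity $\{\alpha_i^2\}_{i=1}^N$ subordinate to the tubular cover $X=\cup_{i=1}^N G(S_i)$, $G(S_i)\cong G\times_{K_i}S_i$, together with the unitary identifications $L^2(G(S_i),E)\cong(L^2(G)\otimes L^2(S_i,V))^{K_i}$, $V\doteq E|_{S_i}$. The von Neumann algebra $\mathcal{R}(L^2(X,E))$ is the weak closure of the bounded $G$-invariant operators, and its natural trace is the one which, on the corner cut out by $\alpha_i$ and viewed inside the $K_i$-fixed points of $\mathcal{R}(L^2(G))\otimes\mathcal{B}(L^2(S_i,V))$, agrees with the tensor-product trace $\tau\otimes\tr$, where $\tau$ is the canonical (right) group von Neumann trace; that $\tau\otimes\tr$ is normal, faithful and semi-finite follows from the corresponding properties of $\tau$ and $\tr$, as in \cite{Perez:2008} Section~2. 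Additivity of the von Neumann trace over this partition of unity is then immediate from $\sum_i\alpha_i^2=1$ together with the trace property.

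Second I would check that, for $A\in\Psi^{-\infty}_{G,p}(X;E,E)$, each corner $\alpha_i A\alpha_i$ is a smoothing operator which on the slice is a finite combination $\sum_k B_k\otimes C_k$ with $B_k$ in the group von Neumann algebra and $C_k\in\mathcal{B}(L^2(S_i,V))$, all with smooth kernels, so that $\tau\otimes\tr$ is computed term by term. Using the Connes--Moscovici identity $\tau(B)=K_B(e,e)$ for such $B$ \cite{Connes:1982}, rewritten as $\tau(B)=\int_G d(g)K_B(g,g)\,\dif g$ for any cutoff $d$ on $G$, together with the change of variables relating the $G$-invariant measure on $G(S_i)$ to $\mu(K_i)^{-1}\,\dif g\,\dif s$, I would reproduce the displayed computation of Remark~\ref{trace intuition}, namely $(\tau\otimes\tr)(\alpha_i A\alpha_i)=\int_{G(S_i)}c(x)\,\Tr K_A(x,x)\,\dif x$. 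Summing over $i$ yields that the von Neumann trace of $A$ equals $\int_X c(x)\,\Tr K_A(x,x)\,\dif x=\tr_G(A)$ by Proposition~\ref{propositions of trace}~(1). The splitting $A=A_1+A_2$ used in Proposition~\ref{propositions of trace}~(3) then extends the identity to all $G$-invariant smoothing operators.

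For the $L^2$-index claim, when $P_0\in\Psi^m_{G,p}$ is elliptic Proposition~\ref{fin ker} gives $P_{\Ker P_0},\,P_{\Ker P_0^{\ast}}\in\Psi^{-\infty}_G$ of $G$-trace class; being bounded and $G$-invariant they lie in $\mathcal{R}(L^2(X,E))$, so each has finite von Neumann trace, and by the first part this trace equals its $G$-trace. Hence $\ind P=\tr_G P_{\Ker P_0}-\tr_G P_{\Ker P_0^{\ast}}$ is exactly the difference of the two von Neumann traces, as asserted.

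The main obstacle is not any individual computation --- those are essentially carried out in Remark~\ref{trace intuition} --- but the structural bookkeeping: one must verify that the globally defined trace on $\mathcal{R}(L^2(X,E))$ genuinely decomposes, through the partition of unity, into the sum of the slice-wise traces, i.e.\ that the corners $\alpha_i\mathcal{R}(L^2(X,E))\alpha_i$ embed compatibly (and tracially) into $(\mathcal{R}(L^2(G))\otimes\mathcal{B}(L^2(S_i,V)))^{K_i}$, and that normality and semi-finiteness are preserved under this identification. Once that is in place, matching the two traces on smoothing operators is routine.
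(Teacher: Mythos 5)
Your proposal is correct and follows essentially the same route as the paper: the paper's ``proof'' of Proposition~\ref{vndim} is precisely the content of Remark~\ref{trace intuition} (slice decomposition via the $G$-invariant partition of unity $\{\alpha_i^2\}$, the identification $L^2(G\times_{K_i}S_i,E)=(L^2(G)\otimes L^2(S_i,V))^{K_i}$, and the computation $\tau(A)\tr(B)=\int_{G(S_i)}c(x)\Tr K_S(x,x)\,\dif x$ using $\tau(A)=K_A(e,e)$), which you reproduce and organize more carefully. The ``structural bookkeeping'' you flag at the end is indeed left implicit in the paper as well, so your write-up is if anything slightly more complete.
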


\begin{example}
When $G$ is a discrete group acting on itself by left translations, define $$c(g)=\begin{cases}1 & g=e \\ 0 & g\neq e \end{cases}, \text{ then}$$ 
$$\tr cT=\sum_{g\in G}<cT\delta_g,\delta_g>=<\sum_{g\in G}g^{-1}(cT)g\delta_e, \delta_e>=<\mathrm{Av}(cT)\delta_e, \delta_e>=\tr_G \mathrm{Av}(cT).$$
In general, $\mathrm{Av(c\cdot)}: \mathcal{B}(L^2(G))\rightarrow \mathcal{R}(L^2(G))$ extends the map 
$\Psi^{\ast}_c\rightarrow\Psi^{\ast}_{G,p}: cT\rightarrow \mathrm{Av}(cT)$ which preserves the corresponding trace. When $T$ is $G$-invariant, $T=\mathrm{Av}(cT)$ and then $\tr_G T=\tr_G\mathrm{Av}(cT)=\tr cT$ motivates the $\tr_G$ formula.
\end{example}

\subsection{The $L^2$-index of elliptic operators on $X$.}

According to Proposition \ref{fin ker}, we define a real valued \emph{$G$-dimension} of $K$, a closed $G$-invariant subspace of $L^2(X, E)$, by $$\dim_G K=\tr_G P_K$$ where $P_K$ is the projection from $L^2(X,E)$ onto $K$, and is $G$-invariant. 
\begin{definition}
The \emph{$L^2$-index} of the elliptic operator $P\in\Psi^{\ast}_{G,p}$ is 
\begin{equation}\ind P=\dim_G \Ker P_0-\dim_G \Ker P_0^{\ast}.\label{L2 ind}\end{equation}
\end{definition}
An immediate computation of the $L^2$-index is given by the following proposition,

\begin{proposition}\label{prMS}
Let $P\in\Psi_{G,p}^m$ be elliptic and $Q$ be an operator so that $1-QP_0=S_1, 1-P_0Q=S_2$ are of $G$-trace class, then $$\ind P=\tr_G S_1-\tr_G S_2.$$
\end{proposition}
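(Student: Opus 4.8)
The plan is to mimic the classical algebraic proof of the index formula for Fredholm operators (Calderón's formula / the ``trace of the error terms'' argument), but carried out entirely inside the $G$-trace calculus developed above. The statement is really the assertion that $\tr_G$ is a $K$-theoretic trace: the quantity $\tr_G S_1 - \tr_G S_2$ depends only on the homotopy/stable class of $P$, and it computes the virtual $G$-dimension of the kernel minus cokernel. First I would recall that, since $P_0$ is elliptic, Proposition \ref{PDOprop}(2) supplies a parametrix, and we are free to \emph{replace} the given $Q$ by a parametrix with $S_1 = 1 - QP_0$, $S_2 = 1 - P_0 Q$ both lying in $\Psi^{-\infty}_{G,p}$; indeed if $Q'$ is another such operator with error terms $S_1', S_2'$ of $G$-trace class, then $Q - Q' = QS_1' - S_2 Q' + (\text{smoothing})$ up to $G$-trace class corrections, and a direct computation using the tracial property (Proposition \ref{propositions of trace}(4)) and linearity shows $\tr_G S_1 - \tr_G S_2 = \tr_G S_1' - \tr_G S_2'$. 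So the right-hand side is well defined, and it suffices to prove the identity for one convenient choice of $Q$, namely a properly supported parametrix, so that $S_1, S_2 \in \Psi^{-\infty}_{G,p}$ are genuinely $G$-trace class by Proposition \ref{propositions of trace}(1).

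Next I would set up the standard $2\times 2$ block computation. On $L^2(X,E_0)\oplus L^2(X,E_1)$ write
\begin{equation*}
L = \begin{pmatrix} S_1 & -(1+S_1)Q \\ P_0 & S_2' \end{pmatrix}, \qquad
L^{-1} = \begin{pmatrix} S_1' & (1+S_1')Q \\ -P_0 & S_2 \end{pmatrix},
\end{equation*}
or more simply invoke the elementary fact that for $a = 1 - S_1 = QP_0$ and $b = 1 - S_2 = P_0 Q$ one has, for any sufficiently high power $k$, the operator identities $1 - a^k = R_1 \in \Psi^{-\infty}_{G,p}$, $1 - b^k = R_2 \in \Psi^{-\infty}_{G,p}$ with $\tr_G R_1 - \tr_G R_2 = \tr_G S_1 - \tr_G S_2$ (because $\tr_G(1 - a^k) = \tr_G\big((1-a)(1 + a + \cdots + a^{k-1})\big)$ and $\tr_G$ is tracial, so each cross term cancels between the $a$- and $b$-sides exactly as in the Fredholm case). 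The point of passing to $a^k, b^k$ is to gain smoothness/decay if needed, but for order-$0$ operators one can work directly with $S_1, S_2$. Then one decomposes $S_i = P_{\Ker} + S_i''$ with respect to the orthogonal splitting $L^2(X,E_0) = \Ker P_0 \oplus (\Ker P_0)^\perp$: on $\Ker P_0$ we have $QP_0 = 0$ so $S_1$ restricts to the identity, i.e. $S_1$ contains $P_{\Ker P_0}$ as a block, and on $(\Ker P_0)^\perp$ the operator $P_0$ is injective with closed range $= (\Ker P_0^{\ast})^\perp$. Using that $P_{\Ker P_0}, P_{\Ker P_0^{\ast}} \in \Psi^{-\infty}_G$ are of $G$-trace class (Proposition \ref{fin ker}) and that $\tr_G$ is tracial, the complementary pieces of $S_1$ and $S_2$ are unitarily/conjugately identified via $P_0$ and contribute equal $G$-traces, so they cancel; what survives is exactly $\tr_G P_{\Ker P_0} - \tr_G P_{\Ker P_0^\ast} = \dim_G \Ker P_0 - \dim_G \Ker P_0^\ast = \ind P$.

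The main obstacle is the last cancellation step: I must show that the restriction of $S_1$ to $(\Ker P_0)^\perp$ and of $S_2$ to $(\Ker P_0^\ast)^\perp$ have equal $G$-traces, and that these restrictions (which are cut down by the $G$-invariant projections $1 - P_{\Ker P_0}$, $1 - P_{\Ker P_0^\ast}$) are again in a class for which $\tr_G$ is defined and tracial. The clean way is not to restrict at all but to argue purely with the trace identity: from $1 - S_1 = QP_0$ and $1 - S_2 = P_0 Q$ one gets $S_1 - S_1^2 = S_1(1 - S_1) = S_1 Q P_0$ and similarly for $S_2$, and since $S_1, S_2$ are $G$-trace class while $Q, P_0$ are bounded $G$-invariant, all the relevant products are $G$-trace class by Proposition \ref{propositions of trace}(3); then $\tr_G(S_1 Q P_0) = \tr_G(P_0 S_1 Q) = \tr_G((1 - S_2) \cdot \text{(stuff)})$, and chasing these through with Proposition \ref{propositions of trace}(4) reduces $\tr_G S_1 - \tr_G S_2$ to $\tr_G(S_1^N) - \tr_G(S_2^N)$ for all $N$, whence to $\tr_G$ of the spectral projections onto the (finite $G$-dimensional) generalized eigenspaces at eigenvalue $1$ — but $1 - S_1 = QP_0$ has $\Ker P_0$ inside its kernel and, after composing with a parametrix, the generalized $1$-eigenspace of $S_1$ is precisely $\Ker P_0$ (and of $S_2$ is $\Ker P_0^\ast$). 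I would present this via the clean telescoping identity
\begin{equation*}
\tr_G S_1 - \tr_G S_2 = \tr_G\big( (1 - P_0 Q)^n \big) - \tr_G\big( (1 - Q P_0)^n \big) + \big[\tr_G S_1 - \tr_G(S_1^n)\big] - \big[\tr_G S_2 - \tr_G(S_2^n)\big]
\end{equation*}
where the bracketed differences vanish by traciality term-by-term, and the first two terms are $\tr_G P_{\Ker P_0^\ast}$ and $\tr_G P_{\Ker P_0}$ for $n$ large, giving $\ind P$. The technical care needed is exactly in justifying each product is $G$-trace class so that Proposition \ref{propositions of trace}(4) applies — which it does, because in every product at least one factor is a smoothing properly supported operator.
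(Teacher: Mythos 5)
Your overall strategy---exploit $P_0S_1=S_2P_0$, the traciality of $\tr_G$, and the fact that $S_1$ acts as the identity on $\Ker P_0$---is the right one, and your opening reduction (that $\tr_G S_1-\tr_G S_2$ is independent of the choice of $Q$, via $\tr_G\bigl((Q'-Q)P_0\bigr)=\tr_G\bigl(P_0(Q'-Q)\bigr)$) is sound and potentially useful. But the concrete closing argument you commit to has a genuine gap. The telescoping identity $\tr_G S_1-\tr_G S_2=\tr_G(S_1^N)-\tr_G(S_2^N)$ is correct (each step being $\tr_G\bigl(S_1^k(1-S_1)\bigr)=\tr_G(S_1^kQP_0)=\tr_G(P_0S_1^kQ)=\tr_G\bigl(S_2^k(1-S_2)\bigr)$, justified by Proposition~\ref{propositions of trace}), but the assertion that for $n$ large these powers give $\tr_G P_{\Ker P_0}$ and $\tr_G P_{\Ker P_0^{\ast}}$ is unjustified and false in general: in this type II setting $S_1$ is not nilpotent off $\Ker P_0$, the powers $S_1^N$ need not converge (nor is $S_1$ self-adjoint), and even granting convergence you would need continuity of $\tr_G$ along a non-monotone sequence. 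Relatedly, your claim that $P_0$ restricted to $(\Ker P_0)^{\perp}$ has closed range equal to $(\Ker P_0^{\ast})^{\perp}$ is exactly what fails for non-Fredholm operators---the range is only dense, $0$ need not be isolated in $\mathrm{sp}(P_0^{\ast}P_0)$, and this is precisely why the powers do not stabilize. (There are also two sign/index slips: $(1-P_0Q)^n=S_2^n$, not $S_1^n$, and $Q-Q'=QS_2'-S_1Q'$.)

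The paper closes the argument by a different, direct device, which is the piece your second and third paragraphs are missing: it introduces a generalized pseudo-inverse $R$ of $P_0$ by Borel functional calculus, so that $RP_0=1-P_{\Ker P_0}$ and $P_0R=1-P_{\Ker P_0^{\ast}}$. Combining $S_1P_{\Ker P_0}=P_{\Ker P_0}$, $P_{\Ker P_0^{\ast}}S_2=P_{\Ker P_0^{\ast}}$ and $P_0S_1=S_2P_0$ with a single application of Proposition~\ref{propositions of trace}~(4) then gives $\tr_GS_1-\tr_GP_{\Ker P_0}=\tr_G(S_1RP_0)=\tr_G(P_0S_1R)=\tr_G(S_2P_0R)=\tr_GS_2-\tr_GP_{\Ker P_0^{\ast}}$, which is the claim. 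In other words, the ``conjugate identification via $P_0$'' you allude to must be implemented by an explicit operator $R$ whose products with $P_0$ are the co-kernel projections, not by passing to high powers of the remainders. If you prefer to keep your independence-of-$Q$ reduction, the same $R$ finishes the proof at once by taking $Q=R$, since then $S_1=P_{\Ker P_0}$ and $S_2=P_{\Ker P_0^{\ast}}$ are of $G$-trace class by Proposition~\ref{fin ker}---though one must still address the fact that $R$ is in general unbounded when invoking traciality.
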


\begin{proof}
The proof is similar to the one in \cite{Atiyah:1976}. We have 
$$S_1P_{\Ker P_0}=P_{\Ker P_0}\text{ and }P_{\Ker P_0^{\ast}}S_2=P_{\Ker P_0^{\ast}}$$ 
by composing $QP_0=1-S_1$ with $P_{\Ker P_0}$ and by composing $P_{\Ker P_0^{\ast}}$ with $1-S_2=P_0Q$ respectively.
Also, $P_0(QP_0)=(P_0Q)P_0$ implies that $P_0S_1=S_2P_0.$
Set $R=\delta_0(P_0^{\ast}P_0)P_0^{\ast}$ where $\delta_0(0)=1, \delta_0(x)=0\text{ for }x\ne0$, so 
$$RP_0=1-P_{\Ker P_0}, P_0R=1-P_{\Ker P_0^{\ast}}.$$
On one hand $\tr_G S_1-\tr_G P_{\Ker P_0}=\tr_GS_1(1-P_{\Ker P_0})=\tr_G(S_1RP_0)$. On the other hand $\tr_G S_2-\tr_G P_{\Ker P_0^{\ast}}=\tr_GS_2(1-P_{\Ker P_0^{\ast}})=\tr_G(S_2P_0R)=\tr_G(P_0S_1R)$.
Therefore $\tr_G S_1-\tr_G S_2=\tr_G P_{\Ker P_0}-\tr_G P_{\Ker P_0^{\ast}}$ by Proposition \ref{propositions of trace}.
\end{proof}

From the last proposition we derive the following \emph{McKean-Singer formula}.

\begin{corollary}\label{MS}
If $D=\begin{pmatrix}0 & D_0^{\ast} \\ D_0 & 0 \end{pmatrix}\in\Psi_G^1(X; E, E)$ is a first order essentially self-adjoint elliptic differential operator, then 
 \begin{equation}\ind D=\tr_G(e^{-tD_0^{\ast}D_0})-\tr_G(e^{-tD_0D_0^{\ast}})\text{ for all } t>0,\label{MS}\end{equation} 
 which in particular means that $\ind D$ is independent of $t>0.$
\end{corollary}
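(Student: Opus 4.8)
The plan is to deduce the McKean--Singer formula (\ref{MS}) from Proposition \ref{prMS} by an appropriate choice of parametrix. The heat operators $e^{-tD_0^{\ast}D_0}$ and $e^{-tD_0D_0^{\ast}}$ are themselves $G$-invariant smoothing operators (this follows from finite propagation speed and local parabolic estimates for the $G$-invariant operator $D$, which descends to an elliptic operator on the compact quotient $X/G$; alternatively, the heat kernel is obtained from the local parametrix construction and then $G$-averaged, so it lies in $\Psi^{-\infty}_{G,p}$ after multiplying by a cutoff). Hence, by Proposition \ref{propositions of trace}(1), both are of $G$-trace class and the right-hand side of (\ref{MS}) is well defined for every $t>0$.

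The key algebraic identity is that, for fixed $t>0$, the operator $Q_t\doteq D_0^{\ast}\,h_t(D_0^{\ast}D_0)$, where $h_t(\lambda)=\frac{1-e^{-t\lambda}}{\lambda}$ (a smooth function of $\lambda\ge0$, so $Q_t$ is again a $G$-invariant pseudodifferential-type operator of the allowed class after composing with a cutoff), serves as a parametrix for $D_0$: indeed, by the functional calculus for the essentially self-adjoint operator $D$,
\begin{equation*}
1-Q_tD_0=1-D_0^{\ast}D_0\,h_t(D_0^{\ast}D_0)=e^{-tD_0^{\ast}D_0}=:S_1,\qquad 1-D_0Q_t=e^{-tD_0D_0^{\ast}}=:S_2.
\end{equation*}
Here one uses that $D_0^{\ast}D_0$ and $D_0D_0^{\ast}$ are intertwined by $D_0$ on the orthogonal complements of their kernels, so $D_0\,h_t(D_0^{\ast}D_0)=h_t(D_0D_0^{\ast})\,D_0$. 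Since $S_1$ and $S_2$ are $G$-trace class by the previous paragraph, Proposition \ref{prMS} applies verbatim and gives $\ind D=\tr_G S_1-\tr_G S_2=\tr_G(e^{-tD_0^{\ast}D_0})-\tr_G(e^{-tD_0D_0^{\ast}})$. Because the left-hand side does not depend on $t$, neither does the right-hand side, which is the final assertion.

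The main technical obstacle is justifying that the heat semigroup operators $e^{-tD_0^{\ast}D_0}$, $e^{-tD_0D_0^{\ast}}$ (and the auxiliary $Q_t$) genuinely belong to the classes for which Propositions \ref{prMS} and \ref{propositions of trace} were proved — i.e., that they are $G$-invariant operators with smooth, properly supported Schwartz kernels. The $G$-invariance is immediate from the spectral theorem since $D$ commutes with $G$. Smoothness of the heat kernel off and on the diagonal is the standard parabolic regularity statement, and proper support is obtained either from unit-propagation-speed estimates for the wave operator $\cos(sD)$ (writing the heat kernel as a Gaussian-weighted integral of the wave kernel, which is properly supported) or by noting that modulo a smoothing properly-supported error the heat kernel can be built from a compactly supported local parametrix and then averaged over $G$ via $\mathrm{Av}_G$, using that $G$ acts properly and cocompactly. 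Once these membership facts are in place, the rest is the short functional-calculus computation above.
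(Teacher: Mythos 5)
Your argument is essentially the paper's own: your $Q_t=h_t(D_0^{\ast}D_0)D_0^{\ast}$ is exactly the parametrix $\int_0^t e^{-sD_0^{\ast}D_0}D_0^{\ast}\,\dif s$ used in the paper, and the reduction to Proposition \ref{prMS} combined with the $G$-trace-class property of the heat operators (the paper's Lemma \ref{exp tr}, proved there via a resolvent contour integral and local symbolic approximation) is the same route. One small correction: the heat operators are not properly supported, so they do not belong to $\Psi^{-\infty}_{G,p}$ as you assert; this is harmless, however, because Proposition \ref{prMS} and Definition \ref{Gtrace} only require $G$-trace class, i.e.\ that $\phi\, e^{-tD_0^{\ast}D_0}\psi$ be trace class for compactly supported $\phi,\psi$, which is precisely what Lemma \ref{exp tr} establishes.
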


To prove (\ref{MS}) we need the following lemma.

\begin{lemma}\label{exp tr}
Let $D_0$ be as above, then $e^{-tD_0D_0^{\ast}}$ and $e^{-tD_0^{\ast}D_0}$ are of $G$-trace class. 
\end{lemma}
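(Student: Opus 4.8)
The plan is to show that $e^{-tD_0D_0^\ast}$ and $e^{-tD_0^\ast D_0}$ are of $G$-trace class by exhibiting each of them, modulo a properly supported smoothing operator, as a product of a $G$-trace class operator with a bounded $G$-invariant operator, and then invoke Proposition \ref{propositions of trace}. The heat operator $e^{-tD_0^\ast D_0}$ acts on $L^2(X,E_0)$ and commutes with the $G$-action since $D$ is $G$-invariant; by finite propagation speed for the wave equation of the essentially self-adjoint first order operator $D$ on the complete manifold $X$, the Schwartz kernel of $e^{-sD^2}$ is smooth (parabolic regularity) and, although not compactly supported, decays rapidly off the diagonal. Thus $e^{-tD_0^\ast D_0}$ is a $G$-invariant operator with smooth integral kernel, and the last sentence of Proposition \ref{propositions of trace}(1) already asserts that such operators are of $G$-trace class, with $G$-trace $\int_X c(x)\Tr K(x,x)\,\dif x$.

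To make this rigorous within the framework of the excerpt, I would argue as follows. First, $D_0^\ast D_0$ is an essentially self-adjoint second order elliptic $G$-invariant differential operator, so for each $t>0$ the operator $e^{-tD_0^\ast D_0}$ is a bounded $G$-invariant operator on $L^2(X,E_0)$. Next, using the pseudodifferential parametrix from Proposition \ref{PDOprop}(2) applied to the elliptic operator $1+D_0^\ast D_0$ (or to a suitable power), write $(1+D_0^\ast D_0)^{-k} = Q_k + S$ where $Q_k\in\Psi^{-2k}_{G,p}(X)$ can be chosen to have low enough order that it is of $G$-trace class once $2k>n=\dim X$, and $S\in\Psi^{-\infty}_{G,p}(X)$ is of $G$-trace class by Proposition \ref{propositions of trace}(1). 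Then factor
$$e^{-tD_0^\ast D_0} = (1+D_0^\ast D_0)^{-k}\cdot\bigl[(1+D_0^\ast D_0)^{k}e^{-tD_0^\ast D_0}\bigr],$$
where the bracketed factor is a bounded $G$-invariant operator by the spectral theorem (for fixed $t>0$ the function $\lambda\mapsto(1+\lambda)^k e^{-t\lambda}$ is bounded on $[0,\infty)$). By Proposition \ref{propositions of trace}(3), the product of a $G$-trace class operator with a bounded $G$-invariant operator is of $G$-trace class, so $e^{-tD_0^\ast D_0}$ is of $G$-trace class; the argument for $e^{-tD_0D_0^\ast}$ is identical with the roles of $E_0$ and $E_1$ interchanged.

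The main obstacle is justifying that an order $-2k$ $G$-invariant properly supported pseudodifferential operator is of $G$-trace class when $2k>n$; this is the analogue, in the proper cocompact setting, of the classical fact that $\Psi^{-n-1}$ operators are trace class on a compact manifold. One clean way around it is to increase the number of parametrix steps until one lands directly in $\Psi^{-\infty}_{G,p}(X)$: choose $k$ large, apply Proposition \ref{PDOprop}(2) to get $(1+D_0^\ast D_0)^{-k}=Q_k+S_k$ with a remainder term, and then iterate, $(1+D_0^\ast D_0)^{-k}-Q_k=S_k\in\Psi^{-\infty}_{G,p}$, so that in fact $(1+D_0^\ast D_0)^{-k}$ differs from a properly supported smoothing operator by an operator that is itself bounded of low order; a second application to $Q_k$ then writes $Q_k$ as (smoothing) $+$ (bounded), and one concludes as above. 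Alternatively, and perhaps most transparently, one simply cites the standard fact (e.g. via the local heat kernel asymptotic expansion and a partition of unity, as in \cite{BGV, Roe:1998ad}) that $e^{-tD^2}$ has a smooth kernel on $X\times X$, so that the final clause of Proposition \ref{propositions of trace}(1) applies verbatim. I would present the parametrix argument as the primary proof since it stays within the pseudodifferential language already set up, and remark that the heat kernel smoothness gives an independent verification.
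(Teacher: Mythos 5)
Your primary argument has a genuine gap at exactly the point you identify as ``the main obstacle,'' and neither of your proposed fixes closes it. The factorization $e^{-tD_0^{\ast}D_0}=(1+D_0^{\ast}D_0)^{-k}\cdot\bigl[(1+D_0^{\ast}D_0)^{k}e^{-tD_0^{\ast}D_0}\bigr]$ is fine, and the bracketed factor is indeed bounded by the spectral theorem; but you then need $(1+D_0^{\ast}D_0)^{-k}$, equivalently its properly supported parametrix $Q_k\in\Psi^{-2k}_{G,p}$, to be of $G$-trace class. Nothing in the paper's toolkit gives this: Proposition \ref{propositions of trace}(1) covers only $\Psi^{-\infty}_{G,p}$ and operators with smooth kernel, and a finite-negative-order operator has neither property automatically. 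Your ``iterate the parametrix'' repair does not work: each application of Proposition \ref{PDOprop}(2) only ever produces a decomposition (properly supported smoothing) $+$ (bounded finite-order), and \emph{bounded} is not \emph{trace class} --- the leftover $Q_k$ is already bounded, so writing it again as smoothing plus bounded is vacuous and the finite-order piece never disappears. The true statement you need (a properly supported $G$-invariant operator of order $<-\dim X$ is of $G$-trace class) is provable, e.g.\ by factoring into Hilbert--Schmidt pieces or by kernel regularity estimates, but that is an additional lemma requiring essentially the same analytic work you are trying to avoid, and you have not supplied it.

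Your fallback --- cite smoothness of the heat kernel and apply the last clause of Proposition \ref{propositions of trace}(1) --- is in substance what the paper does, except that the paper does not merely cite it: it proves that $\phi e^{-D_0^{\ast}D_0}\psi$ has a smooth, compactly supported Schwartz kernel by writing $e^{-D_0^{\ast}D_0}$ as a contour integral of the resolvent, approximating the localized resolvent $\alpha_i\phi(\lambda I-D_0^{\ast}D_0)^{-1}\psi$ by finite pieces of its symbolic asymptotic expansion with estimates uniform in $\lambda$, integrating over the contour, and concluding via the Sobolev embedding theorem. On a complete noncompact manifold this smoothness is not an off-the-shelf quotation from the compact-manifold references, so some such argument is needed. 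Also, as a minor point, finite propagation speed gives off-diagonal support control for the wave group, not smoothness of the heat kernel; smoothness comes from elliptic/parabolic regularity, which is what the paper's resolvent estimates establish.
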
 
 
\begin{proof}[Proof of lemma \ref{exp tr}]
It is sufficient to prove the case when $t=1.$ The proof is based on the ideas in \cite{Gilkey:1974, Connes:1982}. Also refer to the heat kernel estimate for a Riemannian manifold in \cite{Lesch:2009} Appendix B. 

If $\lambda\in\C-[0,\infty)$, then $\lambda I-D_0^{\ast}D_0$ is invertible. 
Let $L=\{\lambda\in\C| d(\lambda, \R_+)=1\}$ be clock-wise oriented. 
Then 

$$e^{-D_0^{\ast}D_0}=\frac{1}{2\pi i}\int_L\frac{e^{-\lambda}}{\lambda I-D_0^{\ast}D_0}\dif\lambda.$$

Let $\phi, \psi\in C^{\infty}_c(X)$ be supported in a compact set $K\subset X$ and let $\{\alpha_i\}_{i=1}^N$ be a partition of unity subordinated to an open cover of $K$ of local coordinate charts. 
We approximate $\phi e^{-D_0^{\ast}D_0}\psi$ by an operator in $\Psi^{-\infty}_c$ (with smooth and compactly supported Schwartz kernel) by inverting $\lambda I-D_0^{\ast}D_0$ ``locally". 

Let $p_i$ be the full symbol of $\alpha_i\phi(\lambda I-D_0^{\ast}D_0)^{-1}\psi$, having the asymptotic sum 
\begin{equation}p_i\sim\sum_{j=2}^{\infty} a_{-j}\text{ on a local coordinate, 
that is, }\mathrm{Op}(p_i-\sum_{j=2}^{m}a_{-j})\in\Psi_c^{-m-1},\text{ } \forall m>1,\label{asysum}\end{equation} where $\mathrm{Op}$ means the operator corresponding to the local symbol.

For any $l>0$ and $n>0$, choose a large enough $M$ and set the operator approximating $\alpha_i\phi(\lambda I-D_0^{\ast}D_0)^{-1}\psi$ to be 
\begin{equation}P_i(\lambda)=\mathrm{Op}(\sum_{j=2}^{M}a_{-j}),\label{psum}\end{equation} 
in the sense that $P_i(\lambda)$ is analytic in $\lambda$ and for any fixed $u\in L^2(X, E),$
\begin{equation}
\|(P_k(\lambda)-\alpha_i\phi(\lambda I-D_0^{\ast}D_0)^{-1}\psi)u\|_l \le C(1+|\lambda|)^{-n},\label{estimate}
\end{equation} where the norm is the Soblev $l$-norm $\|\cdot\|_l.$
The estimate (\ref{estimate}) is made possible by the asymptotic sum (\ref{asysum}).
In fact, let $r(x,\xi)$ be the symbol of $R\doteq P_i(\lambda)-\alpha_i\phi(\lambda I-D_0^{\ast}D_0)^{-1}\psi$ which is in $S^{-M-1}$ and then the left hand side of (\ref{estimate}) is $\|Ru\|_l=\int(1+|\xi|^2)^l|\widehat{Ru}(\xi)|\dif\xi$, where 
$Ru(x)=\int e^{<x-y,\xi>}r(x,\xi)u(y)\dif y\dif\xi$ can be controlled by the right hand side of (\ref{estimate}) when $M>>2l+2n.$ This is because by the definition of $r(x,\xi)$ there is a constant $C$ so that $|r(x,\xi)|<C(1+|\xi|)^{-M-1}$.

Set 
\begin{equation}E(\lambda)=\sum_{i=1}^N E_i(\lambda)=\sum_{i=1}^N\frac{1}{2\pi i}\int_L e^{-\lambda} P_i(\lambda)\dif\lambda.\label{E}\end{equation}
Then the following two observations prove that $\phi e^{-D_0^{\ast}D_0}\psi$ is of trace class. 

\begin{enumerate}
\item The operator $E(\lambda)$ is a compactly supported operator with smooth Schwartz kernel.
\begin{proof}[Proof of claim]
We need to show that the Schwartz kernel of $E_k(\lambda)$ is smooth.
In view of (\ref{psum}) and (\ref{E}), it is sufficient to show that $\mathrm{Op}(a_j), j\le -2$ has smooth kernel and $\displaystyle\int_Le^{-\lambda}\partial^{\beta}(\mathrm{Op}(a_j)u)\dif\lambda$ is integrable for all $\beta$.
This claim can be proved by the symbolic calculus (\cite{Gilkey:1974}). 
The crucial part in the argument is that by the symbolic calculus, all $a_j, j\le-2$ contain the factor $e^{-\sigma_2(D_0^{\ast}D_0)}$
and the fact that $e^{-t\sigma_2(D_0^{\ast}D_0)}$ is rapidly decreasing in $\xi$.
\end{proof}
\item The function $(E(\lambda)-\phi e^{-D_0^{\ast}D_0}\psi)u$ is in $H^l$ for any fixed $u\in L^2.$
\begin{proof}[Proof of claim]
Using (\ref{estimate}), and fixing a $u\in L^2(X,E)$

\begin{align*}\label{}
\|(E(\lambda)-\phi e^{-D_0^{\ast}D_0}\psi)u\|_l &\le\frac{1}{2\pi}\sum_{i=1}^N\int_L e^{-\lambda} \|(P_i(\lambda)-\alpha_i\phi(D_0^{\ast}D_0-\lambda I)^{-1}\psi)u\|_l\dif\lambda \\
&\le C\int_L e^{-\lambda}(1+|\lambda|)^{-n}\dif\lambda \to0 \text{ as } n\to\infty.
\end{align*}
\end{proof}
\end{enumerate}

Note that $E(\lambda)$ depends on the number $M$, which is chosen based on $l,n$, and it has a compactly supported smooth kernel by the first claim and hence $E(\lambda)u\in C_c^{\infty}\subset H^l.$ 
The second claim shows that $\phi e^{-D_0^{\ast}D_0}\psi$ is in $H^l$. 
(When $n\to\infty$, there is a sequence of $E(\lambda)\in H^l$ approaching $\phi e^{-D_0^{\ast}D_0}\psi$ in $\|\cdot\|_l$ norm.)

Let $l\to\infty$, then by the Sobolev Embedding Theorem $(\phi e^{-D_0^{\ast}D_0}\psi)u$ is smooth for all $u\in L^2$.
Therefore $\phi e^{-D_0^{\ast}D_0}\psi$ has a compactly supported smooth kernel and is a trace-class operator.
\end{proof}

\begin{proof}[Proof of Corollary \ref{MS}]
Let
$\displaystyle Q=\int_0^te^{-sD_0^{\ast}D_0}D_0^{\ast}\dif s$, 
which is the parametrix of $D_0$ because 
$$1-QD_0=e^{-tD_0^{\ast}D_0}, I-D_0Q=e^{-tD_0D_0^{\ast}}$$ which is of $G$-trace class by the lemma. 
The statement follows from Proposition \ref{prMS}.
The independence of $t$ can be carried out by a modification of the second proof of \cite{BGV} Theorem 3.50.
\end{proof}

\section{The connection of the $L^2$-index to the $K$-theoretic index.}\label{L^2K}

Let $f\in C_0(X)$ be identified as an operator on $L^2(X, E)$ by point-wise multiplication. 
Let $A\in\Psi^0_{p}(X; E, E)$ be elliptic in the sense of Definition \ref{defell}. 
Using the Rellich lemma one may check that $A_0: L^2(X, E_0)\rightarrow L^2(X, E_1)$ satisfies the following conditions: 
\begin{itemize}
\item $(A_0A_0^{\ast}-I)f\in \mathcal{K}(L^2(X,E_1)),$ $(A_0^{\ast}A_0-I)f\in \mathcal{K}(L^2(X,E_0)),$; 
\item $Af-fA \in \mathcal{K}(L^2(X,E))$;
\item $A_0-g\cdot A_0\in\mathcal{K}(L^2(X, E_1), L^2(X, E_2))$ for all $g\in G.$
\end{itemize}
Hence $A$ represents an element in the $K$-homology group $K^0_G(C_0(X))$.

Topologically, the \emph{$K$-theoretic index} of $[A]\in K^0_G(C_0(X))$, according to \cite{Kasparov:1983}, is defined by 
$$\mathrm{Ind}_t A\doteq[p]\otimes_{C^{\ast}(G,C_0(X))} j^G([A])\in K_0(C^{\ast}(G)),$$ 
which is the image of $[A]$ under the descent map 
$$j^G: KK^G(\C, C_0(X))\rightarrow KK(C^{\ast}(G), C^{\ast}(G, C_0(X)))$$ 
composed with the intersection product with $[p]\in KK(\C, C^{\ast}(G, C_0(X))),$
$$[p]\otimes_{C^{\ast}(G,C_0(X))}: KK(C^{\ast}(G,C_0(X)), C^{\ast}(G))\rightarrow KK(\C, C^{\ast}(G)).$$ 
Here $p\doteq(c\cdot g(c))^{\frac{1}{2}}$ an idempotent in $C_c(G, C_0(X))$,being the image of $1$ under the $\ast$-homomorphism $\C\rightarrow C^{\ast}(G, C_0(X))$ and defining an element in $K_0(C^{\ast}(G, C_0(X))).$

Analytically, the $K$-theoretic index of $A$ is constructed explicitly as follows \cite{Kasparov:1988dw}.
First of all, embed $C_c(X,E)$ in a larger Hilbert $C^{\ast}(G)$-module $C^{\ast}(G, L^2(X,E))$ and after completion under the norm of the Hilbert module, we obtain a $C^{\ast}(G)$-module $\mathcal{E}$ containing $C_c(X,E)$ as a dense subalgebra. Note that $\mathcal{E}$ is a direct summand of $C^{\ast}(G, L^2(X,E))$ and is obtained by compressing the $C^{\ast}(G)$-module $C^{\ast}(G, L^2(X,E))$ with the idempotent $p$.

Then the operator $$\bar A\doteq\mathrm{Av}(cA): C_c(X,E)\rightarrow C_c(X,E)$$ in $\Psi_{G,p}^0(X;E,E)$ extends to two bounded maps $\bar A: L^2(X,E)\rightarrow L^2(X,E)$ and $\bar A: \mathcal{E}\rightarrow\mathcal{E}$ with $\|\bar A\|_{\mathcal{E}}\le\|\bar A\|_{L^2(X,E)}.$ 
Denote by $\mathcal{B}(\mathcal{E})$ the $C^{\ast}$-algebra of all bounded operators on $\mathcal{E}$ having an adjoint and being $C^{\ast}(G)$-module maps.
Then $\bar A:\mathcal{E}\rightarrow\mathcal{E}$ defines an element in $\mathcal{B}(\mathcal{E})$ according to \cite{Kasparov:2008}.
On the Hilbert $C^{\ast}(G)$ module $\mathcal{E}$,  for $e, e_1, e_2\in C_c(X,E),$ a {\em rank one} operator is defined by 
$$\theta_{e_1,e_2}(e)(x)=e_1(e_2,e)(x)=\int_X(\int_G\theta_{g(e_1)(x),g(e_2)(y)}\dif g)e(y)\dif y, \forall x\in X.$$ 
The closure of the the linear combinations of the rank one operators under the norm of $\mathcal{B}(\mathcal{E})$ is the set of {\em compact operators}, denoted by $\mathcal{K}(\mathcal{E}).$ The elements of $\mathcal{K}(\mathcal{E})$ can be identified with the integral operators with $G$-invariant continuous kernel and with proper support.
The following proposition indicates some features of elements from $\mathcal{B}(\mathcal{E}), \mathcal{K}(\mathcal{E}).$
 
\begin{proposition}\cite{Kasparov:2008}\label{kascomp}
{If the symbol of the $G$-invariant properly supported operator $P$ of order $0$ is bounded in the cotangent direction by a constant, then the norm of $P$ in $\mathcal{B}(\mathcal{E})/\mathcal{K}(\mathcal{E})$ does not exceed that constant. The operator $P$ is compact i.e. $P\in\mathcal{K}(\mathcal{E})$, if the symbol of $P$ is $0$ at infinity (in the cotangent direction).}
\end{proposition}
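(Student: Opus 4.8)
The plan is to reduce both assertions to one fact — that $\mathcal{K}(\mathcal{E})$ contains every properly supported $G$-invariant smoothing operator, and more generally every $G$-invariant integral operator with continuous kernel that is either properly supported or rapidly decaying off the diagonal. This is immediate from the description of $\mathcal{K}(\mathcal{E})$ recalled above: such a kernel is a $\mathcal{B}(\mathcal{E})$-limit of kernels $\int_{G}\theta_{g(e_{1})(x),g(e_{2})(y)}\,\dif g$ with $e_{i}\in C_{c}(X,E)$, where in the rapidly decaying case one first truncates the kernel on the $G$-invariant function $(x,y)\mapsto d(x,y)$ to make it properly supported. The symbolic calculus of Proposition~\ref{PDOprop} then does the rest in three steps.

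First (Step~1) I would establish the Rellich-type inclusion $\Psi^{-1}_{G,p}(X;E,E)\subseteq\mathcal{K}(\mathcal{E})$. Fix a positive $G$-invariant elliptic differential operator $\Delta$ of order $2$ (e.g.\ $\nabla^{*}\nabla$ for a $G$-invariant connection), which is properly supported and, $X$ being complete, essentially self-adjoint. The heat operators $e^{-t(1+\Delta)}$, $t>0$, have smooth $G$-invariant kernels with Gaussian off-diagonal decay — here the bounded geometry coming from cocompactness is used — hence lie in $\mathcal{K}(\mathcal{E})$; since $\|e^{-t(1+\Delta)}\|_{\mathcal{B}(\mathcal{E})}\le\|e^{-t(1+\Delta)}\|_{\mathcal{B}(L^{2}(X,E))}\le e^{-t}$, the integral $(1+\Delta)^{-1}=\int_{0}^{\infty}e^{-t(1+\Delta)}\,\dif t$ converges in $\mathcal{B}(\mathcal{E})$, so $(1+\Delta)^{-1}\in\mathcal{K}(\mathcal{E})$, and since $\mathcal{K}(\mathcal{E})$ is a closed ideal of the $C^{*}$-algebra $\mathcal{B}(\mathcal{E})$ its positive square root $(1+\Delta)^{-1/2}$ is there too. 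For $R\in\Psi^{-1}_{G,p}$ I would then write $R=(1+\Delta)^{-1/2}\bigl[(1+\Delta)^{1/2}R\bigr]$ and observe that the bracket is bounded on $\mathcal{E}$: it equals $(1+\Delta)^{-1/2}\bigl[(1+\Delta)R\bigr]$, and $(1+\Delta)R\in\Psi^{1}_{G,p}$ carries $\mathcal{E}$ into the order-$(-1)$ Sobolev $C^{*}(G)$-module while $(1+\Delta)^{-1/2}$ carries that back into $\mathcal{E}$. A compact operator precomposed with a bounded one is compact, so $R\in\mathcal{K}(\mathcal{E})$.

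Secondly (Step~2), for the norm estimate let $C$ bound $\|\sigma_{P}(x,\xi)\|$ in the cotangent direction; if this holds only as $\xi\to\infty$ I first subtract a smoothing (hence compact) operator so that the bound holds for all $(x,\xi)$ — clip the singular values of a representative symbol above by $C$ using a smooth function of $\sigma_{P}^{*}\sigma_{P}$, a modification supported in a bounded $\xi$-region and hence in $S^{-1}$. For $\eta>0$ the section $\tau_{\eta}:=\bigl((C^{2}+\eta)I-\sigma_{P}^{*}\sigma_{P}\bigr)^{1/2}$ is a smooth $G$-invariant symbol of order $0$ (bounded below by $\sqrt{\eta}$, so the square root is smooth), and the averaging construction produces $Q\in\Psi^{0}_{G,p}$ with symbol $\tau_{\eta}$; then $Q^{*}Q+P^{*}P-(C^{2}+\eta)I$ has vanishing symbol, hence lies in $\Psi^{-1}_{G,p}\subseteq\mathcal{K}(\mathcal{E})$ by Step~1. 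In $\mathcal{B}(\mathcal{E})/\mathcal{K}(\mathcal{E})$, writing $\dot T$ for the class of $T$, this reads $\dot Q^{*}\dot Q+\dot P^{*}\dot P=(C^{2}+\eta)\dot I$; positivity of $\dot Q^{*}\dot Q$ gives $\dot P^{*}\dot P\le(C^{2}+\eta)\dot I$, whence $\|P\|_{\mathcal{B}(\mathcal{E})/\mathcal{K}(\mathcal{E})}^{2}\le C^{2}+\eta$, and $\eta\to0$ finishes it. Finally (Step~3), for the compactness claim suppose $\sigma_{P}(x,\xi)\to0$ as $\xi\to\infty$ (uniformly on $\supp c$, hence everywhere by $G$-invariance); given $\e>0$ pick $R$ with $\|\sigma_{P}(x,\xi)\|<\e$ for $|\xi|\ge R$ and a cotangent cutoff $\chi$ with $\chi\equiv1$ on $\{|\xi|\le R\}$ and $\supp\chi\subseteq\{|\xi|\le2R\}$. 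Then $\mathrm{Op}(\chi\sigma_{P})$ has a $\xi$-compactly supported symbol, hence a smooth Schwartz kernel, so $c\cdot\mathrm{Op}(\chi\sigma_{P})\in\Psi^{-\infty}_{c}$ and $\mathrm{Av}_{G}\bigl(c\cdot\mathrm{Op}(\chi\sigma_{P})\bigr)\in\Psi^{-\infty}_{G,p}\subseteq\mathcal{K}(\mathcal{E})$, while $P-\mathrm{Av}_{G}\bigl(c\cdot\mathrm{Op}(\sigma_{P})\bigr)\in\Psi^{-1}_{G,p}\subseteq\mathcal{K}(\mathcal{E})$ by Step~1; hence $P\equiv\mathrm{Av}_{G}\bigl(c\cdot\mathrm{Op}((1-\chi)\sigma_{P})\bigr)\bmod\mathcal{K}(\mathcal{E})$, and the latter has symbol $(1-\chi)\sigma_{P}$ of supremum norm $\le\e$, so $\mathrm{dist}\bigl(P,\mathcal{K}(\mathcal{E})\bigr)\le\e$ by Step~2; as $\e$ is arbitrary and $\mathcal{K}(\mathcal{E})$ is closed, $P\in\mathcal{K}(\mathcal{E})$.

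I expect the main obstacle to be the Rellich lemma: a genuinely negative-order operator — let alone one whose symbol merely decays at infinity — is not an integral operator with an obviously nice kernel, so its compactness as a Hilbert-module operator is a true Rellich phenomenon and must be extracted from honest analysis on $X$ (the decay of the heat kernel of a $G$-invariant Laplacian, together with the mapping properties of the Sobolev $C^{*}(G)$-modules) rather than from symbol bookkeeping alone; once it is in hand, the norm estimate and the compactness claim are formal. Minor points to check in passing are the inequality $\|T\|_{\mathcal{B}(\mathcal{E})}\le\|T\|_{\mathcal{B}(L^{2}(X,E))}$ for the operators used and that the clipping of a symbol representative preserves smoothness, $G$-invariance and order $0$.
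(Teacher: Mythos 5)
The paper offers no proof of this proposition at all --- it is quoted from \cite{Kasparov:2008} as a black box --- so there is no internal argument to compare against; what you have written is, in substance, a reconstruction of Kasparov's own proof, and its architecture is sound. The three-step structure (Rellich inclusion $\Psi^{-1}_{G,p}\subseteq\mathcal{K}(\mathcal{E})$, then the square-root symbol trick $\tau_\eta=((C^2+\eta)I-\sigma_P^*\sigma_P)^{1/2}$ giving $\dot P^*\dot P\le (C^2+\eta)\dot I$ in the quotient, then cutting off the symbol in $\xi$ and letting $\e\to0$) is exactly the standard route, and Steps 2 and 3 are formal once Step 1 is in hand, as you say. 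Two points in Step 1 deserve to be promoted from "minor points to check" to actual lemmas, since they are where the content lives. First, the inequality $\|T\|_{\mathcal{B}(\mathcal{E})}\le\|T\|_{\mathcal{B}(L^2(X,E))}$ is asserted in the paper only for properly supported operators $\mathrm{Av}(cA)$; you need it for the heat semigroup $e^{-t(1+\Delta)}$, which is not properly supported. It does hold for any $G$-invariant contraction with sufficiently decaying kernel, via positivity of the $C^{\ast}(G)$-valued inner product: $\langle e,e\rangle-\langle Te,Te\rangle=\langle Se,Se\rangle\ge0$ with $S=(1-T^*T)^{1/2}$ also $G$-invariant --- but one must check that $g\mapsto\langle Te,g(Te)\rangle$ still defines an element of the crossed product, which is where the Gaussian decay is genuinely used. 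Second, the factorization $R=(1+\Delta)^{-1/2}\bigl[(1+\Delta)^{1/2}R\bigr]$ only finishes the argument once you have shown that $(1+\Delta)^{1/2}R$ (an operator defined by functional calculus composed with a properly supported operator, hence not itself in the properly supported calculus) is bounded on $\mathcal{E}$; this forces you to actually construct the Sobolev $C^{\ast}(G)$-modules $\mathcal{E}^s$ and prove the mapping properties you invoke, which is the real analytic work behind the proposition. Neither point is a gap in the sense of a wrong step --- you have correctly located the load-bearing wall --- but a complete write-up would have to supply them rather than defer them.
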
 

Since $\bar A$ is elliptic, which means that $\|\sigma_{\bar{A}}(x,\xi)^2-1\|\to 0 \text{ as } \xi\to 0, x\in K$
uniformly for any compact set $K\subset X$, 
then according to Proposition \ref{kascomp} we have that $\bar A^2-Id\in\mathcal{K}(\mathcal{E})$.
Let us set $\bar A=\begin{pmatrix}0&\bar{A_0}^{\ast}\\\bar{A_0}&0\end{pmatrix}$, then $[\bar{A_0}]\in K_1(\mathcal{B}(\mathcal{E})/\mathcal{K}(\mathcal{E}))$. 
The analytical \emph{$K$-theoretic index}, $\mathrm{Ind}_a A$, is image of this class in the $K$-theory of the quotient algebra under the boundary map 
$\partial: K_{\ast}(\mathcal{B}(\mathcal{E})/\mathcal{K}(\mathcal{E}))\rightarrow K_{\ast+1}(\mathcal{K}(\mathcal{E}))$ of the six term exact sequence associated to the short exact sequence $0\rightarrow\mathcal{K}(\mathcal{E})\rightarrow\mathcal{B}(\mathcal{E})\rightarrow\mathcal{B}(\mathcal{E})/\mathcal{K}(\mathcal{E})\rightarrow 0.$

\begin{remark}
The set of finite rank $\mathcal{K}(\mathcal{E})$-valued projections forms a finite generated projective $C^{\ast}(G)$-module. Then Theroem 3 of section 6 in \cite{Kasparov:1981} implies that $K_{\ast}(\mathcal{K}(\mathcal{E}))\simeq K_{\ast}(C^{\ast}(G))$. Hence, $\mathrm{Ind}_a P\in K_0(C^{\ast}(G)).$ 
\end{remark}

As a generalization of the Atiyah-Singer index theorem, Kasparov proved that $\mathrm{Ind}_a$ and $\mathrm{Ind}_t$ coincide \cite{Kasparov:1983, Kasparov:2008}. We will simply use $\mathrm{Ind}$ to denote the $K$-theoretic index. 
In summary, the $K$-theoretic index under the homomorphism 
$\mathrm{Ind}: K^0_G(C_0(X))\rightarrow KK(\C, C^{\ast}(G))\simeq K_0(\mathcal{K}(\mathcal{E}))$ 
is calculated by 
\begin{equation}[(L^2(X,E), A)]\mapsto [(\mathcal{E}, \bar A)]\mapsto [\begin{pmatrix} \bar A_0\bar A_0^{\ast} & \bar A_0\sqrt{1-\bar A_0^{\ast}\bar A_0} \\ \sqrt{1-\bar A_0^{\ast}\bar A_0}\bar A_0^{\ast} & 1-\bar A_0^{\ast}\bar A_0 \end{pmatrix}]-[\begin{pmatrix} 1 & 0\\ 0 &0 \end{pmatrix}].\label{KInd}\end{equation}
Note that the second arrow is the Fredholm picture of $KK(\C, C^{\ast}(G))$ via boundary map.
 
Given the $K$-theoretic index $\mathrm{Ind} A\in K_0(\mathcal{K}(\mathcal{E}))$, we will define the a homomorphism $K_0(\mathcal{K}(\mathcal{E}))\rightarrow\R.$ To do this we find a dense subalgebra $\mathcal{S}(\mathcal{E})$ of $\mathcal{K}(\mathcal{E})$ on which a ``trace" can be defined and which is closed under holomorphic functional calculus. 
Since $\mathcal{K}(\mathcal{E})$ is generated by $G$-invariant operators with continuous and properly supported kernel, we define $\mathcal{S}(\mathcal{E})$ to be the subset of the bounded $G$-invariant operators with smooth kernels.
Let $S: C_c^{\infty}(X,E)\rightarrow C_c^{\infty}(X,E)$ be a $G$-invariant smoothing operator. 
Extend $S$ to an operator $\bar{S}\in\mathcal{B}(\mathcal{E})$ and then $\bar S\in\mathcal{S}(\mathcal{E}).$ 
Define the trace on $\bar S \in\mathcal{S}(\mathcal{E})$ by $\tr_G(S)$ and still denote by $\tr_G$.
The trace is well defined for all the elements of $\mathcal{S}(\mathcal{E})$. An element of $\mathcal{S}(\mathcal{E})$ is viewed as matrices with $C^{\ast}(G)$-entries. The trace on such a matrix is the Breuer von Neumann trace \cite{Breuer:1968} on the image of the following map $\mathcal{S}(\mathcal{E})\mapsto\mathcal{S}(\mathcal{E}\otimes_{C^{\ast}(G)}\mathcal{R}(L^2(G)))\subset\mathcal{R}(L^2(X,E))$. Here $\mathcal{S}(\mathcal{E}\otimes_{C^{\ast}(G)}\mathcal{R}(L^2(G)))$ is a subset of all $G$-trace class operators and its elements are represented as matrices with $\mathcal{R}(L^2(G))$-entries.
Recall (Remark \ref{trace intuition}) that $\tr_G$ is defined on a dense subset of the $G$-invariant operators on $L^2(X, E)$,which can be represented as elements of
$$\mathcal{R}(L^2(G))\otimes(\oplus_{i,j}\mathcal{B}(L^2(U_i, E), L^2(U_j, E)),$$ 
and an element of this set can be expressed in terms of a $\mathcal{R}(L^2(G))$-valued matrix. 

\begin{proposition} 
\begin{itemize}
\item We have a canonical isomorphism $K_0(\mathcal{K}(\mathcal{E}))\simeq K_0(\mathcal{S}(\mathcal{E}))$.
\item The $G$-trace $\tr_G$ on $\mathcal{S}$ defines a group homomorphism $${\tr_G}_{\ast}: K_0(\mathcal{K}(\mathcal{E}))\rightarrow\mathbb{R}.$$
\end{itemize}
\end{proposition}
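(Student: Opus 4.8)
The plan is to treat the two assertions in turn. For the first I would show that $\mathcal{S}(\mathcal{E})$ is a dense $*$-subalgebra of $\mathcal{K}(\mathcal{E})$ that is closed under holomorphic functional calculus, together with the same property for all matrix amplifications, and then invoke the standard stability of $K$-theory under passage to such subalgebras; for the second I would run the usual construction of the homomorphism on $K_0$ attached to a finite trace. By construction $\mathcal{S}(\mathcal{E})$ consists of the bounded $G$-invariant operators with smooth, properly supported Schwartz kernel. It is a $*$-algebra: the kernel of $S^{\ast}$ is $K_S(y,x)^{\ast}$, and the composite of two such operators is again of this form because, by properness, only a compact set of group elements contributes to the relevant convolution (as in Proposition \ref{PDOprop}). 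It is dense in $\mathcal{K}(\mathcal{E})$ because the latter is the norm closure of the rank-one operators $\theta_{e_1,e_2}$ with $e_1,e_2\in C_c(X,E)$, and for $e_1,e_2\in C_c^{\infty}(X,E)$ the averaged kernel $\int_G\theta_{g(e_1)(x),g(e_2)(y)}\dif g$ is smooth, so these particular rank-one operators lie in $\mathcal{S}(\mathcal{E})$ and approximate the general one in $\mathcal{B}(\mathcal{E})$-norm. By Proposition \ref{propositions of trace}(1) every element of $\mathcal{S}(\mathcal{E})$ is of $G$-trace class, which is what makes $\tr_G$ available on it.

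\emph{Holomorphic closure and the $K$-isomorphism.} The key point is that if $a\in\mathcal{S}(\mathcal{E})$ and $\mathrm{Id}+a$ is invertible in the unitization $\mathcal{K}(\mathcal{E})^{+}$, then $(\mathrm{Id}+a)^{-1}-\mathrm{Id}\in\mathcal{S}(\mathcal{E})$. To prove this I would work on the scale of $G$-equivariant Sobolev Hilbert $C^{\ast}(G)$-modules $\mathcal{E}_s$ (completions of $C_c(X,E)$ in the module Sobolev norms, with $\mathcal{E}_0=\mathcal{E}$), using that a $G$-invariant properly supported smoothing operator maps $\mathcal{E}_s$ continuously into $\mathcal{E}_{s'}$ for all $s,s'$ and that $\bigcap_s\mathcal{E}_s$ consists of smooth sections. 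Invertibility of $\mathrm{Id}+a$ on $\mathcal{E}_0$ propagates to every $\mathcal{E}_s$: it is injective on $\mathcal{E}_s$ since $(\mathrm{Id}+a)u=0$ with $u\in\mathcal{E}_s$ forces $u=-au\in\mathcal{E}_0$ and hence $u=0$; and for $s>0$ the bounded inverse on $\mathcal{E}_0$ actually maps $\mathcal{E}_s$ into itself boundedly, because $v\doteq(\mathrm{Id}+a)^{-1}f$ satisfies $v=f-av$ with $av\in\bigcap_{s'}\mathcal{E}_{s'}$, so $v\in\mathcal{E}_s$, and the assignment $f\mapsto v=f-a(\mathrm{Id}+a)^{-1}f$ is visibly bounded $\mathcal{E}_s\to\mathcal{E}_s$; the case $s<0$ follows by applying this to $a^{\ast}$ on $\mathcal{E}_{-s}$ and dualising. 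Therefore $b\doteq(\mathrm{Id}+a)^{-1}-\mathrm{Id}=-a(\mathrm{Id}+a)^{-1}$ maps every $\mathcal{E}_s$ into every $\mathcal{E}_{s'}$, is $G$-invariant, and is properly supported (by the proper cocompact hypothesis), so it has a smooth kernel, i.e.\ $b\in\mathcal{S}(\mathcal{E})$. This says $\mathcal{S}(\mathcal{E})$ is holomorphically closed in $\mathcal{K}(\mathcal{E})$; running the same argument with $E$ replaced by $E^{\oplus n}$ shows $M_n(\mathcal{S}(\mathcal{E}))$ is spectrally invariant in $M_n(\mathcal{K}(\mathcal{E}))$ for every $n$. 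The standard stability theorem for $K$-theory under dense, holomorphically closed subalgebras then gives $K_{\ast}(\mathcal{S}(\mathcal{E}))\cong K_{\ast}(\mathcal{K}(\mathcal{E}))$, in particular in degree $0$.

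\emph{The trace homomorphism.} The functional $\tr_G$ on $\mathcal{S}(\mathcal{E})$ is finite, positive and tracial by Proposition \ref{propositions of trace}. Extend it to $M_n(\mathcal{S}(\mathcal{E}))$, viewed as $G$-invariant smoothing operators on $E^{\oplus n}$, by $\tr_G\otimes\Tr$, and then to $M_n(\mathcal{S}(\mathcal{E})^{+})$ by adding the ordinary matrix trace of the scalar part. For a projection $e\in M_n(\mathcal{S}(\mathcal{E})^{+})$ with scalar part $q\in M_n(\C)$, the real number $(\tr_G\otimes\Tr)(e-q)+\Tr(q)$ is unchanged under conjugating $e$ by unitaries of $M_n(\mathcal{S}(\mathcal{E})^{+})$, hence under the homotopy and stabilisation relations defining $K_0$; this is the usual recipe producing the $K_0$-map attached to a finite trace. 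It yields a homomorphism ${\tr_G}_{\ast}\colon K_0(\mathcal{S}(\mathcal{E}))\to\R$, and precomposing with the inverse of the isomorphism of the previous paragraph gives ${\tr_G}_{\ast}\colon K_0(\mathcal{K}(\mathcal{E}))\to\R$.

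\emph{Main obstacle.} Once one knows $\tr_G$ is a faithful finite trace on $\mathcal{S}(\mathcal{E})$ — already established — the $K_0$-construction is formal and density of $\mathcal{S}(\mathcal{E})$ is routine; the real work is the Sobolev-scale bootstrap underlying holomorphic closure, namely that $(\mathrm{Id}+a)^{-1}-\mathrm{Id}$ again has a smooth, properly supported, $G$-invariant kernel. It is important to carry this out uniformly in $n$, so as not to rely on a Fréchet-algebra structure on $\mathcal{S}(\mathcal{E})$: spectral invariance of every $M_n(\mathcal{S}(\mathcal{E}))$ in $M_n(\mathcal{K}(\mathcal{E}))$ is precisely the hypothesis needed for the $K$-theory stability theorem.
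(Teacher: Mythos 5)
Your proposal follows the same overall architecture as the paper's proof: exhibit $\mathcal{S}(\mathcal{E})$ as a dense subalgebra of $\mathcal{K}(\mathcal{E})$ closed under holomorphic functional calculus, invoke the standard $K$-theoretic invariance for such subalgebras, and then push the finite trace $\tr_G$ through projection matrices over the unitization; the trace half of your argument is essentially identical to the paper's. Where you genuinely diverge is in the proof of holomorphic closure. The paper argues purely operator-algebraically: given $a\in\tilde{J_0}$ invertible in $\tilde J=\mathcal{B}(\mathcal{E})$, it approximates $a^{-1}$ in norm by $\lambda I+s$ with $s\in J_0=\mathcal{S}(\mathcal{E})$, observes that $x=a(\lambda I+s)$ satisfies $\|1-x\|<1$, and expands $x^{-1}$ as the Neumann series $\sum_{i\ge0}(1-x)^i$, whose terms lie in $\tilde{J_0}$. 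You instead use the resolvent identity $(\mathrm{Id}+a)^{-1}-\mathrm{Id}=-a(\mathrm{Id}+a)^{-1}$ together with a bootstrap on a scale of Sobolev modules to conclude that the inverse differs from the identity by a smoothing operator. Your route is arguably the more robust one: the paper's Neumann series converges only in the $C^{\ast}$-norm, so the assertion that its limit lies in the non-closed subalgebra $\tilde{J_0}$ needs further justification, whereas your argument extracts smoothness of the kernel of $-a(\mathrm{Id}+a)^{-1}$ directly from the smoothing property of $a$ (equivalently, from $-a(\mathrm{Id}+a)^{-1}=-a+a(\mathrm{Id}+a)^{-1}a$, which sandwiches a bounded operator between two smoothing ones, in the spirit of Proposition \ref{propositions of trace}(3)). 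You are also more careful than the paper on two points the stability theorem genuinely requires: spectral invariance of all matrix amplifications $M_n(\mathcal{S}(\mathcal{E}))$, and the conjugation- and homotopy-invariance of the trace on projections. Two steps in your sketch still deserve to be nailed down: the regularity lemma in the Hilbert-module setting (that a $G$-invariant operator mapping every $\mathcal{E}_s$ boundedly into every $\mathcal{E}_{s'}$ has a smooth kernel, which is cleanest after localizing $\mathcal{E}$ to $L^2(X,E)$), and the proper-support claim for $(\mathrm{Id}+a)^{-1}-\mathrm{Id}$, which does not follow merely from properness of the action; note, however, that the paper's definition of $\mathcal{S}(\mathcal{E})$ and the trace formula of Proposition \ref{propositions of trace}(1) require only a smooth $G$-invariant kernel, not a properly supported one, so this last point can be dropped from your definition and the argument goes through.
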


\begin{proof}
Proposition \ref{propositions of trace} (4) shows that $\mathcal{S}(\mathcal{E})$ is an ideal of $\mathcal{B}(\mathcal E).$ 
Since $\mathcal{S}(\mathcal{E})$ contains the rank one operators, then $\mathcal{K}(\mathcal{E})$ is the $C^{\ast}$-closure of $\mathcal{S}(\mathcal{E}).$ 
Let $J=\mathcal{K}(\mathcal{E}), \mathcal{J}=\mathcal{S}(\mathcal{E})$ and let $\tilde J, \tilde{J_0}$ be obtained by adjoining a unit. Note that $\tilde{J}=\mathcal{B}(\mathcal E).$ We claim that ${J_0}$ is stable under holomorphic functional calculus. To show the claim we essentially need to prove that if $a\in\tilde{J_0}$ is invertible in $\tilde J$, then $a^{-1}\in\tilde{J_0}$ \cite{Connes:1994zh}.  
Let $a^{-1}=\lambda I+r$, where $\lambda\in\C$, $I$ is the unit and $r\in J.$
Choose an $s\in J_0$ so that $\|a^{-1}-\lambda I-s\|<\min\{\frac{1}{\|a\|},1\}$. Then $\|1-\lambda a-as\|<1$ implies that $a(\lambda I+s)$ is invertible. So $\lambda I+s$ is also invertible and $s^{-1}\in\tilde J$, then $a^{-1}=(\lambda I+s)[a(\lambda I+s)]^{-1}$. Since $J_0$ is an ideal of $\tilde{J}$ we only need to show that $a(\lambda I+s)^{-1}\in\tilde{J_0}$. Let $x=a(\lambda I+s)\in J_0$, then $\|1-x\|<1$, then $\displaystyle x^{-1}=[1-(1-x)]^{-1}=\sum_{i=0}^{\infty}(1-x)^i\in\tilde{J_0}.$ The claim is proved.
Hence $\mathcal{S}(\mathcal{E})$ is a dense subalgebra of $\mathcal{K}(\mathcal{E})$ closed under holomorphic functional calculus, which implies that $K_{\ast}(\mathcal{K}(\mathcal{E}))=K_{\ast}(\mathcal{S}(\mathcal{E})).$

An element of $K_0(\mathcal{S}(\mathcal{E}))$ is represented by projection matrix with entries in $\mathcal{S}(\mathcal{E})$, on which there is a natural trace consisting of the composition of the matrix trace with $\tau$ on $\mathcal{S}(\mathcal{E})$.
Note that if the element was represented by the difference of two classes of matrices with entries in $\mathcal{S}(\mathcal{E})^+$, the algebra defined by adding a unit, then we define the trace of this extra unit to be $0.$
Hence we obtain a homomorphism ${\tr_G}_{\ast}: K_{\ast}(\mathcal{S}(\mathcal{E}))\rightarrow\R$ by the properties of the trace $\tau$.
\end{proof}

Composing with the $K$-theoretic index, $P$ has a numerical index given by the image of the map
$$K^0_G(C_0(X))\xrightarrow[]{{{\text{{K-theoretic index}}}}}
    {{K_0(\mathcal{S})}}
    \xrightarrow{{{{\tr_G}_{\ast}}}}\R$$
and this number depends only on the symbol class and the manifold according to Kasparov's $K$-theoretic index formula (Theorem \ref{k-homological}). We show that this number is in fact the $L^2$-index.

 \begin{proposition}\label{same ind}
 Let $P\in\Psi^0_{G,p}(X;E,E)$ be elliptic, then its $L^2$-index coincides with the trace of its $K$-theoretic index, i.e. $\ind P={\tr_G}_{\ast}(\mathrm{Ind}[P]).$
 \end{proposition}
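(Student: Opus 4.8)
The plan is to replace the representative of $\mathrm{Ind}[P]$ coming from (\ref{KInd}) by one all of whose entries lie in $\mathcal{S}(\mathcal{E})$, so that ${\tr_G}_{\ast}$ becomes literally the matrix trace followed by the $G$-trace of Section~3, and to arrange that the outcome visibly equals $\dim_G\Ker P_0-\dim_G\Ker P_0^{\ast}$. This detour is needed because the entries $1-P_0^{\ast}P_0$, $1-P_0P_0^{\ast}$ in (\ref{KInd}) lie in $\mathcal{K}(\mathcal{E})$ by Proposition~\ref{kascomp} but are typically pseudodifferential operators of negative order, hence not of $G$-trace class, so $\tr_G$ cannot be evaluated on them directly. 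Since $P$ is $G$-invariant, $\bar P=\mathrm{Av}(cP)=P$; so after the harmless normalization $\|P_0\|\le1$ by functional calculus (which alters neither the $G$-equivariant $K$-homology class of $P$ nor its $L^2$-index), $\mathrm{Ind}[P]$ is the boundary $\partial[P_0]\in K_0(\mathcal{K}(\mathcal{E}))$ of $[P_0]\in K_1(\mathcal{B}(\mathcal{E})/\mathcal{K}(\mathcal{E}))$.

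Next I would invoke the standard description of this boundary by a parametrix idempotent. Fix a parametrix $Q\in\Psi^0_{G,p}(X;E,E)$ for $P_0$ as in Proposition~\ref{PDOprop}(2), with $S_1=1-QP_0$ and $S_2=1-P_0Q$ in $\Psi^{-\infty}_{G,p}$; then, in $K_0(\mathcal{K}(\mathcal{E}))$,
$$\mathrm{Ind}[P]=[e_Q]-\Big[\begin{pmatrix}0&0\\ 0&1\end{pmatrix}\Big],\qquad e_Q=\begin{pmatrix}S_1^2&(S_1+S_1^2)Q\\ S_2P_0&1-S_2^2\end{pmatrix}.$$
That $e_Q$ is idempotent, and that $[e_Q]-[\mathrm{diag}(0,1)]$ agrees with the graph-projection class (\ref{KInd}), is a routine homotopy of idempotents that does not involve $X$ or $G$. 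The point of this particular $Q$ is that $S_1,S_2\in\Psi^{-\infty}_{G,p}\subset\mathcal{S}(\mathcal{E})$ (Proposition~\ref{PDOprop}(3)) and $Q\in\mathcal{B}(\mathcal{E})$, so the only entry of $e_Q$ outside $\mathcal{S}(\mathcal{E})$ is the adjoined unit occurring in $1-S_2^2$ --- precisely the setting in which ${\tr_G}_{\ast}$ is defined, the adjoined unit being assigned trace $0$.

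The trace computation is then short. By the definition of ${\tr_G}_{\ast}$ (matrix trace composed with $\tr_G$, adjoined unit $\mapsto0$),
$${\tr_G}_{\ast}(\mathrm{Ind}[P])=\tr_G(S_1^2)-\tr_G(S_2^2).$$
Because $P_0S_1=P_0(1-QP_0)=(1-P_0Q)P_0=S_2P_0$ and $\tr_G$ is tracial on products of smoothing and bounded operators (Proposition~\ref{propositions of trace}(4)),
$$\tr_G(S_1)-\tr_G(S_1^2)=\tr_G\big(S_1(1-S_1)\big)=\tr_G(S_1QP_0)=\tr_G(P_0S_1Q)=\tr_G(S_2P_0Q)=\tr_G\big(S_2(1-S_2)\big)=\tr_G(S_2)-\tr_G(S_2^2),$$
whence $\tr_G(S_1^2)-\tr_G(S_2^2)=\tr_G(S_1)-\tr_G(S_2)$. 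By Proposition~\ref{prMS} this last quantity is $\ind P$, and by Proposition~\ref{vndim} the $\tr_G$ appearing here is the same von Neumann trace that defines $\dim_G$, so indeed ${\tr_G}_{\ast}(\mathrm{Ind}[P])=\ind P$.

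I expect the one genuinely delicate point to be the identification in the second paragraph: checking in the Hilbert $C^{\ast}(G)$-module setting that the graph-projection formula (\ref{KInd}) for $\partial[P_0]$ coincides with the parametrix-idempotent formula, together with the bookkeeping that realizes the pseudodifferential parametrix $Q$ as an adjointable operator on $\mathcal{E}$ and licenses the idempotent manipulations; everything after that is formal. A more conceptual variant, formally heavier, replaces $Q$ by the Moore--Penrose pseudo-inverse $(P_0^{\ast}P_0)^{\dagger}P_0^{\ast}$, for which $S_1=P_{\Ker P_0}$ and $S_2=P_{\Ker P_0^{\ast}}$ (of $G$-trace class by Proposition~\ref{fin ker}); then the off-diagonal entries of $e_Q$ vanish, $e_Q=\mathrm{diag}(P_{\Ker P_0},\,1-P_{\Ker P_0^{\ast}})$, so $\mathrm{Ind}[P]=[P_{\Ker P_0}]-[P_{\Ker P_0^{\ast}}]$ and ${\tr_G}_{\ast}(\mathrm{Ind}[P])=\dim_G\Ker P_0-\dim_G\Ker P_0^{\ast}=\ind P$ with no further computation.
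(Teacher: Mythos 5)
Your argument is correct and essentially identical to the paper's: both replace the graph-projection representative of $\mathrm{Ind}[P]$ by the parametrix idempotent $e_Q$ whose entries (apart from the adjoined unit) are smoothing, evaluate ${\tr_G}_{\ast}$ to obtain $\tr_G(S_1^2)-\tr_G(S_2^2)$, and reduce to Proposition \ref{prMS} --- the paper by passing to the improved parametrix $Q'=2Q-QP_0Q$ with $1-Q'P_0=S_1^2$ and $1-P_0Q'=S_2^2$, you by the equivalent cyclic-trace identity $\tr_G(S_1^2)-\tr_G(S_2^2)=\tr_G(S_1)-\tr_G(S_2)$. The only caution concerns your closing aside: the Moore--Penrose pseudo-inverse $(P_0^{\ast}P_0)^{\dagger}P_0^{\ast}$ is in general not an adjointable operator on the Hilbert module $\mathcal{E}$ and the kernel projections need not lie in $\mathcal{K}(\mathcal{E})$, so that variant cannot be carried out at the level of $K_0(\mathcal{K}(\mathcal{E}))$; your main argument, however, does not depend on it.
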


\begin{proof}
Let $P=A$ and then $P=\bar A=\mathrm{Av}(cA)$ in \ref{KInd}. Then 
$$\mathrm{Ind} P=[\begin{pmatrix} P_0 P_0^{\ast} & P_0\sqrt{1-P_0^{\ast}P_0} \\ \sqrt{1-P_0^{\ast}P_0}P_0^{\ast} & 1-P_0^{\ast}P_0 \end{pmatrix}]-[\begin{pmatrix} 1 & 0\\ 0 &0 \end{pmatrix}].$$ 
We shall alter the matrix representatives without changing the equivalence class, so that we may apply $\tr_G$ to the $2\times2$-matrices.

Given $P_0\in\Psi^0_{G,p}(X;E_0,E_1)$ and using Proposition \ref{PDOprop}, there is a $Q\in\Psi^0_{G,p}$ so that $1-QP=S_0, 1-PQ=S_1.$ According to the boundary map construction in \cite{Connes:1990ht} section 2, we lift $\begin{pmatrix}0&-Q\\P&0\end{pmatrix}$ which is invertible in $M_2(\mathcal{B}(\mathcal{E})/\mathcal{S}(\mathcal{E}))$ to an invertible element $u=\begin{pmatrix}S_0 & -(1+S_0)Q \\ P & S_1\end{pmatrix}$ in $M_2(\mathcal{B}(\mathcal{E}))$ and then $$\mathrm{Ind} P\doteq[u\begin{pmatrix}1&0\\0&0\end{pmatrix}u^{-1}]-[\begin{pmatrix}0&0\\0&1\end{pmatrix}]=[\begin{pmatrix}S_0^2&S_0(1+S_0)Q\\P_0S_1&1-S_1^2\end{pmatrix}]-[\begin{pmatrix}0&0\\0&1\end{pmatrix}].$$
Therefore, ${\tr_G}_{\ast}(\mathrm{Ind} P)=\tr_G(S_0^2)+\tr_G(1-S_1^2)-\tau(1)=\tr_G(S_0^2)-\tr_G(S_1^2).$
Choose another $Q'\doteq2Q-QPQ$, then $1-Q'P_0=S_0^2, 1-PQ'=S_1^2$ with $S_0^2, S_1^2$ being smoothing operators. 
Then using Proposition \ref{prMS}, we conclude that $\tr_G(S_0^2)-\tr_G(S_1^2)=\ind P.$ Hence ${\tr_G}_{\ast}(\mathrm{Ind} P)=\ind P$. 
\end{proof}

\begin{remark}
Let $X=G/H$ be a homogeneous space of a unimodular Lie group $G$ (where $H$ is a compact subgroup).
In  \cite{Connes:1982} section 3, it was shown directly that the $L^2$-index depends only on the symbol class $[\sigma_P]$ of $P$ in $K^G_0(C_0(T^{\ast}X))$. 
Plus, there exists a homomorphism $i:K^G_0(C_0(T^{\ast}X))\rightarrow\R$ so that $i[\sigma_P]=\ind P.$
Note that the Poincar\'e duality between K-homology and K-theory gives rise to $K^G_0(C_0(T^{\ast}X))\simeq K_G^0(C_0(X)).$
So $L^2$-index essentially gives a homomorphism:
\begin{equation}\ind: K_G^0(C_0(X))\rightarrow\R.\end{equation}
\end{remark}

\begin{remark}
In this section we work on the cycles in $K_G^0(C_0(X))$ determined by odd self-adjoint elliptic pseudo-differential operators on $X$. 
If $Y$ is another proper cocompact $G$-manifold and if $E$ is a $G$-bundle where $L^2(Y,E)$ admits a $C_0(X)$-representation, so that $[(L^2(Y, E), Q)]\in K^0_G(C_0(X))$ with $Q\in\Psi^0_{G,p}(Y; E, E)$, we may carry out similar constructions to those in the section easily and there is no problem to define the $L^2$-index of $Q$. However, it is not clear how to define $L^2$-index for an arbitrary representative $(A,F)$ in a general representing cycle $[(A, F)]\in  K^0_G(C_0(X))$, where $A$ is a $C_0(X)$-algebra and $F$ is a general elliptic operator. Because we do not know the way to define pseudo-differential calculus for the $C^{\ast}$-algebra $A$ and we do not have Proposition \ref{PDOprop} for $F$, using which we calculate the $L^2$-index. But it should be possible to find a proper cocompact $G$-manifold $Y$ and pseudo-differential operator $Q$ on $Y$ so that $[(A, F)]=[(L^2(Y, E), Q)]\in K^0_G(C_0(X)).$ 
\end{remark}

\section{Reduction to the $L^2$-index of a Dirac type operator.}\label{5}

We shall show in this section that for any elliptic operator $P\in\Psi^0_{G,p}(X;E,E)$, there is a Dirac type operator $\tilde D$ satisfying $\ind P=\ind\tilde D.$
To do this, we show that $P$ and $\tilde D$ have the same $K$-theoretic index and then apply Proposition \ref{same ind}.

\begin{theorem}\cite{Kasparov:1983, Kasparov:2008}\label{k-homological}
{Let $X$ be a complete Riemannian manifold and let $G$ be a locally compact group acting on $X$ properly and isometrically. Let $P$ be a $G$-invariant elliptic operator on $X$ of order $0.$ Then 
\begin{equation}[P]=[\sigma_P]\otimes_{C_0(T^{\ast}X)}[D]\in K^{\ast}_G(C_0(X)),\label{k-homological formula}\end{equation} 
where $[D]$ is the equivalence class defined by the Dolbeault operator on $T^{\ast}X.$}
\end{theorem}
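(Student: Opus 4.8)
The plan is to deduce Theorem~\ref{k-homological} from the general $KK$-theoretic index theorem of Kasparov \cite{Kasparov:1983, Kasparov:2008}, which asserts exactly a factorization of an elliptic $G$-operator through its symbol and a Dolbeault (Dirac) element, and then to explain why our hypotheses (properness, but not necessarily cocompactness here, order zero, essential self-adjointness) put us in the situation covered by that theorem. Concretely, the symbol $\sigma_P$ of a $G$-invariant order-zero elliptic operator defines, via the clutching construction, an element $[\sigma_P]\in K^*_G(C_0(T^*X))$; Kasparov duality identifies $K^*_G(C_0(T^*X))$ with $K_*^G(C_0(X))$ through cap product with the Dolbeault class $[D]$ of the almost-complex manifold $T^*X$ (its tangent bundle being canonically complex after the standard identification $T(T^*X)\cong \pi^*(TX\otimes\mathbb C)$). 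So the content of the statement is that under this duality isomorphism the class $[\sigma_P]\mapsto [P]$, i.e. $[P]=[\sigma_P]\otimes_{C_0(T^*X)}[D]$.

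First I would recall the construction of the Kasparov element $[P]\in KK^G(C_0(X),\mathbb C)$ attached to an order-zero $G$-invariant elliptic operator $P$ on $L^2(X,E)$: one checks, as already indicated in Section~\ref{L^2K} using the Rellich lemma, that $P$ is properly supported, that $[P,f]\in\mathcal K$ for $f\in C_0(X)$, that $P^2-1$ acts compactly after multiplication by $C_0(X)$, and that $P-gPg^{-1}\in\mathcal K$ for all $g\in G$; this data gives an equivariant Kasparov cycle. Second I would recall the Dolbeault cycle $[D]\in KK^G(C_0(T^*X),\mathbb C)$ built from the de Rham--Dolbeault operator of the almost-complex manifold $T^*X$, together with its $KK$-theoretic inverse (the ``Dirac'' class) realizing the Thom/duality isomorphism $KK^G(\mathbb C, C_0(X))\cong KK^G(C_0(T^*X),\mathbb C)$ — this is Kasparov's first Poincar\'e duality, valid for any $G$-manifold with a $G$-invariant almost-complex (or $\mathrm{spin}^c$) structure on $T^*X$, which always exists. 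Third, and this is the crux, I would invoke Kasparov's index theorem: the analytic index map $K^*_G(C_0(X))\to KK^G(C_0(T^*X),\mathbb C)^{\vee}$ defined by taking the symbol agrees with the duality isomorphism, which is precisely the asserted identity once one unwinds that the product $[\sigma_P]\otimes_{C_0(T^*X)}[D]$ computes the dual of $[\sigma_P]$ under the pairing. The verification amounts to: (i) producing an explicit unbounded or bounded representative for the Kasparov product $[\sigma_P]\otimes_{C_0(T^*X)}[D]$ — for a pseudodifferential operator this is standard, the product is represented again by a $0$-th order operator whose symbol is, up to lower order, a clutching of $\sigma_P$ over $\sigma_D$; and (ii) showing this operator is operator-homotopic (through Kasparov cycles) to the original $P$, which follows because two order-zero $G$-invariant elliptic operators with homotopic symbols define the same class in $K^*_G(C_0(X))$.

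The main obstacle I expect is item (ii): matching the operator produced by the Kasparov product with the operator $P$ itself. This is the content of the ``multiplicativity of the symbol'' in the elliptic calculus, and equivariantly it requires care that all the homotopies and parametrices can be chosen $G$-invariant and properly supported — which is exactly what Proposition~\ref{PDOprop} and the averaging operator $\mathrm{Av}_G$ are for. In practice, since this theorem is quoted verbatim from \cite{Kasparov:1983, Kasparov:2008}, I would keep the proof at the level of a careful citation: state that Kasparov's index theorem applies to any $G$-invariant order-zero elliptic operator on a complete Riemannian $G$-manifold with proper isometric action (cocompactness is \emph{not} needed for this $K$-homological statement, only for the later trace computations), that the relevant Poincar\'e duality is the one identifying $K^*_G(C_0(X))$ with $K^G_*(C_0(T^*X))$ via $[D]$, and that under this identification the class of $P$ corresponds to the class of its symbol; hence \eqref{k-homological formula}. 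Any honest self-contained argument would reproduce Kasparov's construction of the Dolbeault element and the product, so I would not attempt that here.
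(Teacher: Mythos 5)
Your proposal is correct and matches the paper's treatment: Theorem~\ref{k-homological} is quoted from \cite{Kasparov:1983, Kasparov:2008} with no proof supplied in the paper itself (only a remark unpacking the cycles $[\sigma_P]$ and $[D]$), and you likewise defer to Kasparov while accurately identifying the ingredients a self-contained argument would need, including the correct observation that cocompactness is not required here. One small imprecision: for the product $[\sigma_P]\otimes_{C_0(T^{\ast}X)}[D]$ to land in $K^{\ast}_G(C_0(X))$ rather than in $KK^G(\C,\C)$, the symbol class must be taken in $KK^G(C_0(X), C_0(T^{\ast}X))$ (retaining the $C_0(X)$-action by pointwise multiplication), not merely in the $K$-theory of $C_0(T^{\ast}X)$ as your phrase ``$[\sigma_P]\in K^{\ast}_G(C_0(T^{\ast}X))$'' suggests — this is exactly what the paper's remark following the theorem makes explicit.
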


\begin{remark}
In (\ref{k-homological formula}), the ellipticity of $P=\begin{pmatrix}0 & P_0^{\ast} \\ P_0 & 0 \end{pmatrix}\in\Psi^0_{G,p}(X;E,E)$ (Definition \ref{defell}) implies that the symbol $\sigma_P=\begin{pmatrix}0 & \sigma_{P_0}\\ \sigma_{P_0} & 0 \end{pmatrix}$ defines an element of $KK^G(C_0(X), C_0(T^{\ast}X))$.
In fact, using the Hermitian structure on $E=E_0\oplus E_1$, we obtain $C_0(T^{\ast}X, \pi^{\ast}E)$, a Hilbert module over $C_0(T^{\ast}X)$, and the set of ``compact operators" is $C_0(T^{\ast}X, \Hom(\pi^{\ast}E, \pi^{\ast}E))$. Also $C_0(X)$ acts on $C_0(T^{\ast}X, \pi^{\ast}E_0\oplus\pi^{\ast}E_1)$ by pointwise multiplication.
Hence for all $f\in C_0(X)$, $(\sigma_P^2-I)f$ is compact by (\ref{ell}) and $[\sigma_P, f]=0$.
Therefore, the symbol $\sigma_P: \pi^{\ast}E\rightarrow\pi^{\ast}E$ defines the following element in $KK$-theory: 
$$[(C_0(T^{\ast}X, \pi^{\ast}E_0\oplus\pi^{\ast}E_1),\begin{pmatrix}0&\sigma_{P_0}^{\ast}\\ \sigma_{P_0}&0\end{pmatrix})]\in KK^G(C_0(X), C_0(T^{\ast}X)).$$ 

In (\ref{k-homological formula}), the {\em Dolbeault operator $D$} is a first order differential operator 
$D=\sqrt2(\bar\partial+\bar\partial^{\ast})$ acting on smooth sections of $\Lambda^{0,\ast}(T^{\ast}(T^{\ast}X))$, where  $\displaystyle\bar\partial=\frac{\partial}{\partial\bar z}=\frac12(\frac{\partial}{\partial\xi}+i\frac{\partial}{\partial x}).$
Denote by $H$ the Hilbert space of $L^2$-forms of bi-degree $(0,\ast)$ on $T^{\ast}X$ graded by the odd and even forms. 
Then $D$ is an order $1$ essentially self-adjoint operator on $H$. 
The $C^{\ast}$-algebra $C_0(T^{\ast}X)$ acts on $H$ by point-wise multiplication. 
\emph{The Dolbeault element} is the $K$-homological cycle given by
$$[(H,\frac{D}{\sqrt{1+D^2}})]\in K^0_G(C_0(T^{\ast}X))=KK^G(C_0(T^{\ast}X),\mathbb{C}).$$
\end{remark}

\begin{remark}\label{KAS}
Theorem \ref{k-homological} says that $[P]$ is given by the index pairing of the symbol with some fundamental (Dolbeult) operator on $T^{\ast}X$. This is the essence of the Atiyah-Singer index theorem.
When $X$ is compact with trivial group action, apply the map $$C^{\ast}: K^0(C(X))\rightarrow K^0(\C)$$ induced by the constant map $C: C(X)\rightarrow\C: f\mapsto f(\mathrm{pt})$ to both sides of (\ref{k-homological formula}). The left hand side of (\ref{k-homological formula}) is then the Fredholm index of $P$ and the right hand side is the intersection product of $[\sigma_P]\in K_0(C_0(T^{\ast}X))$ with $[D]\in K^0(C_0(T^{\ast}X))$. It is classical fact that $[\sigma_P]$ is viewed as some equivalence class of vector bundle $V$. Then the intersection product is the well-known Fredholm index of the Dirac operator $D$ with coefficients in $V$. 
\end{remark}  

The following $K$-theoretic index formula serves as an important corollary to Theorem \ref{k-homological}.

\begin{theorem}\cite{Kasparov:1983}\label{Kasparov}
{Let $X$ be a complete Riemannian manifold, on which a locally compact group $G$ acts properly and isometrically with compact quotient. Let $P$ be a properly supported $G$-invariant elliptic operator on $X$ of order $0.$ 
Then
\begin{align*}\mathrm{Ind} P&=[p]\otimes_{C^{\ast}(G,C_0(X))}j^G([P])\\
&=[p]\otimes_{C^{\ast}(G,C_0(X))}j^G([\sigma_P])\otimes_{C^{\ast}(G,C_0(T^{\ast}X))}j^G([D])\in K_{\ast}(C^{\ast}(G)).\end{align*} Where $p$ is the idempotent in $C^{\ast}(G,L^2(X,E))$ defined by $p=(c\cdot g(c))^{\frac{1}{2}}$ and $[D]$ is the Dolbeault element. }
\end{theorem}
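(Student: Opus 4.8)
The statement is an immediate corollary of Theorem \ref{k-homological} together with the functoriality of Kasparov's descent homomorphism $j^G$ and the compatibility of $j^G$ with Kasparov products; so the plan is essentially bookkeeping with $KK$-theoretic machinery rather than hard analysis. First I would recall that by definition (see the discussion preceding the statement, following \cite{Kasparov:1983, Kasparov:2008}) the $K$-theoretic index of $P$ is
$$\mathrm{Ind} P = [p]\otimes_{C^{\ast}(G,C_0(X))} j^G([P]) \in K_0(C^{\ast}(G)),$$
where $[p]\in KK(\C, C^{\ast}(G,C_0(X)))$ is the class of the idempotent $p=(c\cdot g(c))^{1/2}$. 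This is the first displayed equality and requires no argument beyond unwinding definitions.

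For the second equality, I would apply Theorem \ref{k-homological}, which gives the factorization $[P]=[\sigma_P]\otimes_{C_0(T^{\ast}X)}[D]$ in $KK^G(C_0(X),\C)$, with $[\sigma_P]\in KK^G(C_0(X),C_0(T^{\ast}X))$ the symbol class and $[D]\in KK^G(C_0(T^{\ast}X),\C)$ the Dolbeault element. The descent functor $j^G\colon KK^G(A,B)\to KK(C^{\ast}(G,A),C^{\ast}(G,B))$ is known to be compatible with Kasparov products: $j^G(x\otimes_B y)=j^G(x)\otimes_{C^{\ast}(G,B)} j^G(y)$ (this is part of the package established in \cite{Kasparov:1988dw}, and is also exactly the form in which Kasparov states the index theorem in \cite{Kasparov:1983}). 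Applying this to $[P]=[\sigma_P]\otimes[D]$ yields
$$j^G([P]) = j^G([\sigma_P])\otimes_{C^{\ast}(G,C_0(T^{\ast}X))} j^G([D])$$
in $KK(C^{\ast}(G,C_0(X)), C^{\ast}(G))$. Substituting this into the defining formula for $\mathrm{Ind} P$ and using the associativity of the Kasparov product over the intermediate algebra $C^{\ast}(G,C_0(X))$ gives the second displayed identity. I would also note here that the $K$-homology class $[P]$ is legitimately an element of $KK^G(C_0(X),\C)$: this was checked in Section \ref{L^2K} via the Rellich lemma, and the properness and isometry hypotheses (together with cocompactness, which makes $[p]$ well defined) are exactly what is needed to invoke both the descent construction and Theorem \ref{k-homological}.

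The only genuinely subtle point—and the one I would treat with some care—is the identification of the two descriptions of $\mathrm{Ind}$ in play: the ``topological'' one via $[p]\otimes j^G(\cdot)$ used in the statement, and the ``analytic'' one via the boundary map in the six-term sequence for $0\to\mathcal K(\mathcal E)\to\mathcal B(\mathcal E)\to\mathcal B(\mathcal E)/\mathcal K(\mathcal E)\to 0$ that was set up earlier and shown to compute the $L^2$-index in Proposition \ref{same ind}. That these agree is precisely the content of Kasparov's theorem that $\mathrm{Ind}_a=\mathrm{Ind}_t$, already cited in the excerpt; so strictly speaking no new work is required, but I would make the citation explicit at this juncture so that the chain $\ind P = {\tr_G}_{\ast}(\mathrm{Ind}[P])$ (Proposition \ref{same ind}) connects cleanly to the right-hand side of the present theorem. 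In short, the main obstacle is not a technical estimate but simply ensuring that the several incarnations of the index—$\mathrm{Ind}_a$, $\mathrm{Ind}_t$, and the product formula—are invoked consistently; modulo that, the proof is a one-line application of multiplicativity of descent to Theorem \ref{k-homological}.
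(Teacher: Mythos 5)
Your proposal is correct and follows exactly the route the paper intends: the paper states this result without proof, citing \cite{Kasparov:1983} and describing it as ``an important corollary to Theorem \ref{k-homological},'' and your derivation (first equality by definition of $\mathrm{Ind}_t$, second by applying the multiplicativity of the descent map $j^G$ to the factorization $[P]=[\sigma_P]\otimes_{C_0(T^{\ast}X)}[D]$ and invoking associativity of the Kasparov product) is precisely that corollary spelled out. Your additional remark on reconciling $\mathrm{Ind}_a$ with $\mathrm{Ind}_t$ is consistent with the paper's earlier discussion and introduces nothing at odds with it.
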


Analogous to the vector bundle construction mentioned in Remark \ref{KAS} (See also \cite{ABP:1973} section 7), we define a $G$-bundle $V(\sigma_P)$ using the symbol $\sigma_P$ as follows. 
Let $B(X)\subset T^{\ast}X$ be the unit ball bundle with its boundary, that is, the sphere bundle $S(X)\subset T^{\ast}X$. 
A new manifold $\Sigma X$ is obtained by gluing two copies of $B(X)$ along their boundaries:
\begin{equation} \Sigma X=B(X)\cup_{S(X)} B(X). \end{equation} 
The action of $G$ on $T^{\ast}X$ extends naturally to $\Sigma X$ because $G$ acts on $X$ isometrically. 
The ellipticity of $P$ implies the invertibility of $\sigma_P|_{S(X)}$, the symbol restricted to $S(X)$. 
Define a $G$-vector bundle over $\Sigma X$ by the gluing map $\sigma_P$ on the boundary, that is, 
\begin{equation}\label{VSigmaP}
V(\sigma_P)=\pi^{\ast}E|_{B(X)}\cup_{\sigma_{P}|_{S(X)}}\pi^{\ast}E|_{B(X)}.
\end{equation}
Here $V(\sigma_P)$ defines an element in the representable $KK$-theory $RKK^0_G(X; C_0(X), C_0(\Sigma X))$. 
$V(\sigma_P)$ is $\Z/2\Z$-graded and is the direct sum of two bundles: 
$V(\sigma_{P_0})=\pi^{\ast}E_0|_{B(X)}\cup_{\sigma_{P_0}|_{S(X)}}\pi^{\ast}E_1|_{B(X)}$ 
and 
$V(\sigma_{P_0^{\ast}})=\pi^{\ast}E_1|_{B(X)}\cup_{\sigma_{P_0^{\ast}}|_{S(X)}}\pi^{\ast}E_0|_{B(X)}$.
There is a natural homomorphism 
$RKK^0_G(X; C_0(X), C_0(\Sigma X))\rightarrow KK^G(C_0(X), C_0(\Sigma X))$. Denote by $[V(\sigma_P)]$ as the equivalence class of $V(\sigma_P)$ either in $RKK^0_G(X; C_0(X), C_0(\Sigma X))$ and or in $KK^G(C_0(X), C_0(\Sigma X))$. 
We shall not distinguish the notations when it is clear from the context. 
In the proof Proposition \ref{KKsymbol} we shall see that as a $KK$-cycle, 
$$[V(\sigma_P)]=[(C_0(\Sigma X,V(\sigma_P)), 0)]\text{  in  }KK^G(C_0(X), C_0(\Sigma X)).$$

\begin{remark}
When $X$ is compact and when $G=\{e\}$, the inclusion $\C\rightarrow C(X)$ further reduces $\sigma_P$ to an element of $KK(\C, C_0(T^{\ast}X))$ by ``forgetting" the action of $C(X)$ on the Hilbert-$C(X)$ module $C_0(T^{\ast}X)$. 
Therefore,
$[\sigma_P]\in KK(\C, C_0(T^{\ast}X)))\simeq K_0(C_0(T^{\ast}X))$ maps to a vector bundle, trivial at infinity in $T^{\ast}X$. The bundle is constructed by gluing $\pi^{\ast}E|_{B(X)}$ and $\pi^{\ast}E|_{T^{\ast}X-B(X)^{\circ}}$ along the boundaries using the invertible map $\sigma_P|_{S(X)}$ and is the restriction of $V(\sigma_P)$ to $T^{\ast}X.$
\end{remark}

\begin{proposition} \label{KKsymbol}
The homomorphism  
\begin{align*}KK^G(C_0(X), C_0(T^{\ast}X))&\rightarrow KK^G(C_0(X), C_0(\Sigma X))\\
 [(C_0(T^{\ast}X, \pi^{\ast}E), \sigma_P)]&\mapsto [(C_0(\Sigma X,V(\sigma_P)), 0)]\end{align*} is induced by the inclusion map $i: C_0(T^{\ast}X)\rightarrow C_0(\Sigma X)$. 
\end{proposition}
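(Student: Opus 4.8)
The plan is to make the coefficient-change map $i_\ast$ completely explicit on Kasparov cycles and then to recognize the transplanted symbol cycle as the classical clutching (difference) class of the bundle $V(\sigma_P)$. First I would pin down $i$: identifying $T^\ast X$ with the open unit ball bundle $B(X)^\circ$ by a $G$-equivariant fibrewise radial diffeomorphism realizes $T^\ast X$ as one of the two open halves of $\Sigma X=B(X)\cup_{S(X)}B(X)$, so that $i\colon C_0(T^\ast X)\to C_0(\Sigma X)$ is extension by zero and its image is a closed two-sided ideal. Because $i$ is the inclusion of an ideal, the internal tensor product $C_0(T^\ast X,\pi^\ast E)\otimes_{i}C_0(\Sigma X)$ is canonically isomorphic, as a Hilbert $C_0(\Sigma X)$-module, to $C_0(T^\ast X,\pi^\ast E)$ itself, with $C_0(\Sigma X)$ now acting by restriction of functions to $T^\ast X\subset\Sigma X$ and $C_0(X)$ still acting through $\pi^\ast$. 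Hence $i_\ast[(C_0(T^\ast X,\pi^\ast E),\sigma_P)]=[(C_0(T^\ast X,\pi^\ast E),\sigma_P)]$ with these enlarged module structures, and the task is to rewrite this class as $[(C_0(\Sigma X,V(\sigma_P)),0)]$.

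Next I would normalize the symbol. Using ellipticity (Definition \ref{defell}) and the homotopy invariance of $KK$, replace $\sigma_{P_0}$ by a $G$-invariant symbol that agrees on $\{\|\xi\|\ge\tfrac12\}$, and in particular on $S(X)$, with a unitary bundle isomorphism $u\colon\pi^\ast E_0\to\pi^\ast E_1$, so that there $\sigma_P=\left(\begin{smallmatrix}0 & u^\ast\\ u & 0\end{smallmatrix}\right)$ is a self-adjoint involution and $1-\sigma_P^2$ is a nonnegative endomorphism supported in $\{\|\xi\|<\tfrac12\}$. This homotopy alters neither side of the asserted identity: the left side is a $KK$-class, and $V(\sigma_P)$ depends only on the homotopy class of the clutching datum $u|_{S(X)}$ used in (\ref{VSigmaP}).

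With these reductions the two cycles live on the same module: over the open half $B(X)^\circ=T^\ast X$ the bundle $V(\sigma_P)$ is $\pi^\ast E$, and a section of $V(\sigma_P)$ over $\Sigma X=B_+\cup_{S(X)}B_-$ is a pair of sections of $\pi^\ast E$ over $B_\pm$ matched by $\sigma_P$ along the collar of $S(X)$. I would then exhibit an operator homotopy over $C_0(X)$ with coefficient algebra $C_0(\Sigma X\times[0,1])$ that slides the radial parameter in $B_+$ so as to deform $\sigma_P$ to $0$ in the interior (legitimate, since $1-\sigma_P^2\ge0$ is a compact perturbation there) while the clutching identification forces the operator to equal $\sigma_P$ on the collar of $S(X)$ throughout; this is the standard mechanism by which the $KK$-class of a symbol becomes, on the sphere bundle, the Atiyah--Singer difference class. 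Running the same deformation from the $RKK^0_G(X;C_0(X),C_0(\Sigma X))$-representative of $V(\sigma_P)$ establishes along the way the identity $[V(\sigma_P)]=[(C_0(\Sigma X,V(\sigma_P)),0)]$ announced just before the statement, and evaluating the homotopy at its endpoints gives the proposition.

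I expect the main difficulty to be this last step, for two reasons. First, every intermediate term of the family must be checked to be a genuine Kasparov cycle: the module maps must be adjointable and $C_0(\Sigma X)$-linear, and $(F^2-1)\phi(f)$ must remain compact with a bound uniform in the homotopy parameter. Second, and more subtly, one must keep careful track of the $\Z/2\Z$-grading and of the degenerate ``at infinity'' part of the cycle (the copies of $E_0$ and $E_1$ sitting over the two poles of $\Sigma X$): a naive identification produces the bare bundle $V(\sigma_P)$ rather than the symbol class, and the two differ by the pullback of $[E_1]-[E_0]$ from the base, so the normalization forcing both representatives to carry the same degenerate piece over the poles is exactly what makes the identification go through.
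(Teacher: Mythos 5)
Your overall strategy is the paper's: normalize $\sigma_P$ to a self-adjoint involution near $S(X)$, push forward along the ideal inclusion $i$, and finally contract the operator to $0$ over $\Sigma X$. Your explicit description of $i_\ast$ (extension by zero along an ideal, so the internal tensor product is canonically the original module with enlarged coefficients) is correct and a useful addition. The gap is the pivotal claim that after this push-forward ``the two cycles live on the same module.'' They do not: $i_\ast C_0(T^{\ast}X,\pi^{\ast}E)$ consists of sections supported in one open half $B(X)^{\circ}\subset\Sigma X$ and vanishing on $S(X)$, whereas $C_0(\Sigma X,V(\sigma_P))$ also contains every section over the second copy of $B(X)$, where $V(\sigma_P)\cong\pi^{\ast}E$ but with the grading reversed (by (\ref{VSigmaP}), $E_1$ sits in the even part $V(\sigma_{P_0})$ and $E_0$ in the odd part over that half). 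Paired with the zero operator this extra piece is a genuine, non-degenerate cycle contributing roughly $[\pi^{\ast}E_1]-[\pi^{\ast}E_0]$, which is nonzero in general. You correctly identify this as ``the main difficulty'' and assert that a suitable normalization ``is exactly what makes the identification go through,'' but that normalization is the entire content of the proposition and is never supplied. The paper closes precisely this gap by the standard stabilization: add the degenerate cycle $(C_0(T^{\ast}X,\pi^{\ast}E),1)$, conjugate $\mathrm{diag}(\sigma_P,1)$ by the unitary $u=\left(\begin{smallmatrix}\sigma_P & -\sqrt{1-\sigma_P^2}\\ \sqrt{1-\sigma_P^2}&\sigma_P\end{smallmatrix}\right)$, and compress by $Q=u\,\mathrm{diag}(1,0)\,u^{-1}$, whose range is exactly $C_0(T^{\ast}X,V(\sigma_P)|_{T^{\ast}X})$; the complementary summand $((1-Q)(\cdot),1-Q)$ carries the identity operator and is therefore degenerate. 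That explicit bookkeeping (or an equivalent difference-bundle argument) is what your sketch must contain for the identification to be a proof.

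A smaller but real problem: your operator homotopy is over-constrained. Once the cycle lives over $\Sigma X$, each fiber of $\Sigma X\rightarrow X$ is compact, so $\eta(f)(F^2-1)$ is compact for \emph{every} bounded adjointable $F$ commuting with $\eta(C_0(X))$ modulo compacts; the straight-line homotopy $t\mapsto tF$ is then a legitimate operator homotopy to the zero operator with no condition whatsoever on the collar of $S(X)$ --- this is the paper's last step. Insisting that the operator ``equal $\sigma_P$ on the collar throughout'' would in fact prevent the path from ever reaching $0$; the constraint should be dropped, not enforced.
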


\begin{proof}
First of all, the cycle $(C_0(\Sigma X,V(\sigma_P)), 0)$ defines an element of $KK^G(C_0(X), C_0(\Sigma X))$, 
because $f\cdot(0^2-\mathrm{Id}_{C_0(\Sigma X,V(\sigma_P))})$ is compact in the Hilbert-$C_0(\Sigma X)$-module $C_0(\Sigma X,V(\sigma_P)).$
Here, the compactness of the fiber of $\Sigma X$ over $X$ is important. The argument fails when replacing $\Sigma X$ by $T^{\ast}X$. For example, $(C_0(\Sigma X,V(\sigma_P)|_{T^{\ast}X}), 0)$ does not define an element in $KK^G(C_0(X), C_0(T^{\ast}X))$.

Without loss of generality, we may assume $\sigma_P$ satisfies that 
$$\sigma_P^2=1\text{ on }S(X)\text{ and }\|\sigma_P\|\le1.$$ 
Using the standard boundary map construction in the exact sequence of $K$-theory, we obtain the following projection $Q$ using the unitary 
$u=\begin{pmatrix}\sigma_P & -\sqrt{1-\sigma_P^2} \\ \sqrt{1-\sigma_P^2} & \sigma_P \end{pmatrix}\in M_2(C_0(T^{\ast}X, \pi^{\ast}E))$:  
$$Q\doteq u\begin{pmatrix}1 & 0 \\ 0 & 0\end{pmatrix}u^{-1}=\begin{pmatrix}\sigma_{P}^2 & \sigma_{P}\sqrt{1-\sigma_P^2} \\ \sqrt{1-\sigma_P^2}\sigma_P & 1-\sigma_P^2\end{pmatrix}.$$

Recall that the bundle $(V(\sigma_P)|_{T^{\ast}X}$ glued by $\sigma_P$ is given by the image of the projection $Q$ on $\pi^{\ast}E\oplus\pi^{\ast}E$, that is, 
$$C_0(T^{\ast}X,V(\sigma_P)|_{T^{\ast}X})=QC_0(T^{\ast}X, \pi^{\ast}E\oplus\pi^{\ast}E).$$
Let $w=u\begin{pmatrix} \sigma_P & 0 \\0 & 1\end{pmatrix}u^{\ast}.$ Then $QwQ=w, (1-Q)w(1-Q)=1-Q,$ and $Qw(1-Q)=(1-Q)wQ=0.$
Then 
\begin{align*}
  &[(C_0(T^{\ast}X, \pi^{\ast}E), \sigma_P)]=[(C_0(T^{\ast}X, \pi^{\ast}E), \sigma_P)]+[(C_0(T^{\ast}X, \pi^{\ast}E), 1)]\\
=&[(C_0(T^{\ast}X, \pi^{\ast}E\oplus\pi^{\ast}E), \begin{pmatrix}\sigma_P & 0 \\ 0 & 1\end{pmatrix})] =[(C_0(T^{\ast}X, \pi^{\ast}E\oplus\pi^{\ast}E), w)]  \\
=&[(QC_0(T^{\ast}X, \pi^{\ast}E\oplus\pi^{\ast}E), x)]+ [((1-Q)C_0(T^{\ast}X, \pi^{\ast}E\oplus\pi^{\ast}E), 1-Q)] \\
=&[(QC_0(T^{\ast}X, \pi^{\ast}E\oplus\pi^{\ast}E), x)]\\  
\rightarrow&[(C_0(\Sigma X,V(\sigma_P)), \tilde x)]=[(C_0(\Sigma X,V(\sigma_P)), 0)].
\end{align*}
Here, the arrow in the last line comes from the following fact. The Hilbert $C_0(T^{\ast}X)$-module $C_0(T^{\ast}X, V(\sigma_P)|_{T^{\ast}X})$ maps to the Hilbert $C_0(\Sigma X)$-module $C_0(\Sigma X, V(\sigma_P))$ under the map 
$i_{\ast}: KK^G(C_0(X), C_0(T^{\ast}X))\rightarrow KK^G(C_0(X), C_0(\Sigma X))$
induced from the inclusion $i$. 
The last equality follows from the operator homotopy $t\rightarrow t\tilde x$ and the observation that $(C_0(\Sigma X,V(\sigma_P)), t\tilde x)$ is a Kasparov $(C_0(X), C_0(\Sigma X))$ module for all $t\in[0,1]$.
The proof is complete.
\end{proof}

The Dolbeault operator on $T^{\ast}X$ extends to the proper cocompact $G$-manifold $\Sigma X$, which also has an almost complex structure.
We just glue two Dolbeault operators on $B(X)\subset T^{\ast}X$ along the boundary 
(the normal directions of $S(X)$ in $B(X)$ need to switch signs on different pieces). 
The new Dolbeault operator $\bar D$ is clearly $G$-invariant and defines an element 
\begin{equation}\label{barD}
[\bar D]=[(L^2(\Sigma X, \Lambda^{0,\ast}(T^{\ast}(\Sigma X))), \frac{\bar D}{\sqrt{1+\bar D^2}})]
\end{equation}
in $KK^G(C_0(\Sigma X), \C)$.
(In section 6 we shall not distinguish $[\bar D]$ and $[D].$) 
The following proposition is obvious. 

\begin{proposition}
The inclusion $i: C_0(T^{\ast}X)\rightarrow C_0(\Sigma X)$ induces the natural map 
\begin{equation}i^{\ast}: KK^G(C_0(\Sigma X), \C)\rightarrow KK^G(C_0(T^{\ast}X), \C):[\bar D]\mapsto[D].\end{equation}
\end{proposition}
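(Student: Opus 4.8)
The statement combines functoriality with a local identification, which is what makes it routine; the plan is to spell out both. First I would observe that under the identification $\Sigma X=B(X)\cup_{S(X)}B(X)$ the cotangent bundle $T^{\ast}X$ is the open, dense, $G$-invariant subset of $\Sigma X$ whose complement is the ``section at infinity'', a copy of $X$ of positive codimension, hence of measure zero. Thus $C_0(T^{\ast}X)$ is a $G$-invariant ideal of $C_0(\Sigma X)$ and $i$ is the corresponding extension-by-zero $\ast$-homomorphism, which is $G$-equivariant; since $KK^G(-,\C)$ is contravariant in its first variable, $i$ induces $i^{\ast}\colon KK^G(C_0(\Sigma X),\C)\to KK^G(C_0(T^{\ast}X),\C)$, $(H,\rho,F)\mapsto(H,\rho\circ i,F)$. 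That is the asserted natural map, so only $i^{\ast}[\bar D]=[D]$ remains.

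For the identification, I would use that the almost complex structure of $\Sigma X$ restricts on $T^{\ast}X$ to the one used to define $D$, and that the complement of $T^{\ast}X$ is null, so that $L^2(\Sigma X,\Lambda^{0,\ast}(T^{\ast}\Sigma X))$ and $L^2(T^{\ast}X,\Lambda^{0,\ast}(T^{\ast}(T^{\ast}X)))$ are canonically the same Hilbert space $H$, with the same $C_0(T^{\ast}X)$-action, and such that $\bar D$ and $D$ agree as first-order differential expressions on $C_c^{\infty}(T^{\ast}X,\Lambda^{0,\ast})$ — by construction $\bar D$ is glued from copies of the Dolbeault operator of $B(X)\subset T^{\ast}X$, and the choice of Riemannian metric does not affect the resulting $K$-homology classes. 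Hence $\bar D$ and $D$ are self-adjoint realisations of a common symmetric operator, and their bounded transforms $\bar F=\bar D(1+\bar D^2)^{-1/2}$, $F=D(1+D^2)^{-1/2}$ are $G$-invariant order-$0$ pseudodifferential operators on $T^{\ast}X$ with the same principal symbol (Clifford multiplication by $\xi/|\xi|$). A cutoff argument — excise a shrinking tube around the section at infinity, outside of which $\bar D=D$ by construction — then shows that $\rho(f)(\bar F-F)$ is compact on $H$ for every $f\in C_0(T^{\ast}X)$, so the straight-line homotopy $t\mapsto t\bar F+(1-t)F$ passes through Kasparov $(C_0(T^{\ast}X),\C)$-modules and $i^{\ast}[\bar D]=[(H,\rho,\bar F)]=[(H,\rho,F)]=[D]$.

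The only point where ``obvious'' conceals work is this last compactness claim: that replacing the realisation of the Dolbeault operator coming from the compactified fibres of $\Sigma X$ by the complete-manifold realisation on $T^{\ast}X$ costs only a $C_0(T^{\ast}X)$-compact perturbation of the normalised operator. I expect that to be the main obstacle. It is a standard consequence of elliptic analysis on complete manifolds (coinciding symbols, pseudolocality) together with the positive codimension of the removed section, but it is the step that genuinely uses the geometry rather than formal properties of $KK$-theory; alternatively one may invoke the general excision principle that the $K$-homology class of a Dirac-type operator restricts along the inclusion of an open dense submanifold with null complement to the Dirac-type class of that submanifold.
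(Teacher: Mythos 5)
The paper itself offers no argument here --- it declares the proposition ``obvious'' immediately after constructing $\bar D$ --- so your write-up is not competing with a proof in the text but supplying one, and your overall skeleton is the right one: $i$ is extension by zero from the open $G$-invariant subset $T^{\ast}X\subset\Sigma X$ (complement the section at infinity), $i^{\ast}$ is composition of the representation with $i$, and the real content is that the restricted Dolbeault cycle of $\Sigma X$ is operator-homotopic to the Dolbeault cycle of $T^{\ast}X$. You also correctly locate where the work sits (the local compactness of $\rho(f)(\bar F-F)$).

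However, two of your intermediate assertions are false as stated and need repair. First, $L^2(\Sigma X,\Lambda^{0,\ast}T^{\ast}\Sigma X)$ and $L^2(T^{\ast}X,\Lambda^{0,\ast}T^{\ast}(T^{\ast}X))$ are not ``canonically the same Hilbert space'': the diffeomorphism identifying $T^{\ast}X$ with $\Sigma X$ minus the section at infinity is a radial reparametrisation of the fibres that is nowhere near an isometry, so the two $L^2$ spaces carry mutually singular-looking (though equivalent) measures and non-isomorphic-on-the-nose Hermitian bundles; you must introduce the unitary built from the square root of the Radon--Nikodym derivative together with a bundle isometry $\Lambda^{0,\ast}T^{\ast}\Sigma X|_{T^{\ast}X}\cong\Lambda^{0,\ast}T^{\ast}(T^{\ast}X)$, and compare $F$ with the \emph{conjugate} of $\bar F$. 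Second, and consequently, the claim that ``$\bar D=D$ outside a shrinking tube around the section at infinity'' is wrong: the two operators differ on the entire second copy of $B(X)$ (all of $T^{\ast}X\setminus B(X)$), not merely near infinity, because there the metrics and complex structures disagree. So the excision-near-infinity argument does not apply; what you actually need is the statement you mention only in passing --- that the Dolbeault/Dirac class of an almost complex $G$-manifold is independent of the choice of compatible metric and of the self-adjoint realisation --- applied globally on $T^{\ast}X$, or equivalently the standard fact that after the unitary conjugation the two bounded transforms are $G$-invariant order-$0$ pseudodifferential operators with equal principal symbol, whence $f(\bar F'-F)$ is compact for $f\in C_c^{\infty}(T^{\ast}X)$ and your straight-line homotopy goes through. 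With that substitution the proof is correct; as written, the step that carries the load rests on a false identification.
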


\begin{corollary}\label{imcor}
Assuming the same notations and conditions in the $K$-homological formula in Theorem \ref{k-homological}, we have
\begin{enumerate}
\item The elliptic pseudo-differential operator $P$ is in the same K-homology class as the intersection product $[V(\sigma_P)]\otimes [\bar D]$ in the image of the map
$$KK^G(C_0(X), C_0(\Sigma X))\times KK^G(C_0(\Sigma X), \C)\rightarrow KK^G(C_0(X), \C).$$ 
\item The operator $P$ relates to a Dirac type operator $\bar D_{V(\sigma_P)}$, that is, the Dolbeault operator $\bar D$ on $\Sigma X$ twisted by the bundle $V(\sigma_P)$ over $\Sigma X$, in the following sense:
 \begin{equation}[P]=j^{\ast}[\bar D_{V(\sigma_P)}]\label{pair}\end{equation} 
where $j^{\ast}: KK^G(C_0(\Sigma X), \C)\rightarrow KK^G(C_0(X), \C)$ is induced by the inclusion $j: C_0(X)\rightarrow C_0(\Sigma X).$
\end{enumerate}
\end{corollary}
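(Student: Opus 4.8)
The plan is to deduce both statements from the $K$-homological formula of Theorem \ref{k-homological} by transporting everything along the open inclusion $i\colon C_0(T^{\ast}X)\to C_0(\Sigma X)$ and then recognising the resulting Kasparov product as the twist of the Dolbeault operator of $\Sigma X$ by $V(\sigma_P)$.

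For (1) I would start from $[P]=[\sigma_P]\otimes_{C_0(T^{\ast}X)}[D]$ and feed in the two functoriality statements already established: $i^{\ast}[\bar D]=[D]$ (the proposition immediately preceding this corollary) and $i_{\ast}[\sigma_P]=[V(\sigma_P)]$ (Proposition \ref{KKsymbol}). The standard compatibility of the Kasparov product with functoriality, $a\otimes_{B}i^{\ast}(b)=i_{\ast}(a)\otimes_{C}b$ for $i\colon B\to C$, $a\in KK^G(C_0(X),B)$ and $b\in KK^G(C,\C)$, then yields
\[
[P]=[\sigma_P]\otimes_{C_0(T^{\ast}X)}i^{\ast}[\bar D]=i_{\ast}[\sigma_P]\otimes_{C_0(\Sigma X)}[\bar D]=[V(\sigma_P)]\otimes_{C_0(\Sigma X)}[\bar D],
\]
which is exactly (1); the product lies by construction in the image of $KK^G(C_0(X),C_0(\Sigma X))\times KK^G(C_0(\Sigma X),\C)\to KK^G(C_0(X),\C)$.

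For (2) I would compute $[V(\sigma_P)]\otimes_{C_0(\Sigma X)}[\bar D]$ explicitly. Recall that $[V(\sigma_P)]$ is represented by the cycle $(C_0(\Sigma X,V(\sigma_P)),0)$, with $C_0(X)$ acting on the left by pull-back along $\Sigma X\to X$, i.e. through $j\colon C_0(X)\to C_0(\Sigma X)$. Since the operator on the first leg vanishes, the interior tensor product of the underlying modules is canonically $L^2(\Sigma X,\Lambda^{0,\ast}(T^{\ast}(\Sigma X))\otimes V(\sigma_P))$, and a product operator is subject only to being an $F_{\bar D}$-connection (with $F_{\bar D}=\bar D(1+\bar D^{2})^{-1/2}$); the Kasparov positivity condition is vacuous because it involves only the zero operator of the first leg. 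Choosing a $G$-invariant Hermitian connection on $V(\sigma_P)$ and forming the associated twisted Dolbeault operator $\bar D_{V(\sigma_P)}$, its bounded transform $\bar D_{V(\sigma_P)}(1+\bar D_{V(\sigma_P)}^{2})^{-1/2}$ is such a connection, so the product cycle is $(L^2(\Sigma X,\Lambda^{0,\ast}(T^{\ast}(\Sigma X))\otimes V(\sigma_P)),\bar D_{V(\sigma_P)}(1+\bar D_{V(\sigma_P)}^{2})^{-1/2})$ with $C_0(X)$ acting through $j$; that is precisely $j^{\ast}[\bar D_{V(\sigma_P)}]$. Combining with (1) gives $[P]=j^{\ast}[\bar D_{V(\sigma_P)}]$.

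The hard part is this last identification. First, $V(\sigma_P)$ is obtained by clutching and is a priori only a continuous $G$-bundle, so before talking about $\bar D$ twisted by $V(\sigma_P)$ one should replace it by a $G$-equivariantly isomorphic smooth bundle, which exists on the proper cocompact $G$-manifold $\Sigma X$ and leaves the class in $KK^G(C_0(X),C_0(\Sigma X))$ unchanged. Second, one must check that $\bar D_{V(\sigma_P)}$ is essentially self-adjoint on the complete manifold $\Sigma X$ and that its bounded transform genuinely satisfies the connection and compactness conditions of the Kasparov product; both follow from the bounded geometry of $\Sigma X$ and of $V(\sigma_P)$ provided by cocompactness, exactly as in the classical statement that twisting a Dirac operator by a vector bundle realises the Kasparov product with that bundle's class.
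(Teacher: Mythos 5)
Your proposal is correct and follows essentially the same route as the paper: part (1) is the identical functoriality computation $[P]=[\sigma_P]\otimes i^{\ast}[\bar D]=i_{\ast}[\sigma_P]\otimes[\bar D]=[V(\sigma_P)]\otimes[\bar D]$ using Proposition \ref{KKsymbol} and the preceding proposition, and part (2) is the same explicit evaluation of the Kasparov product, identifying the interior tensor product module with $L^2(\Sigma X,\Lambda^{0,\ast}(T^{\ast}(\Sigma X))\otimes V(\sigma_P))$ and the product operator with (the bounded transform of) $\bar D_{V(\sigma_P)}$, the positivity condition being vacuous since the first operator is $0$. The only cosmetic difference is that the paper exhibits the $F$-connection as a compression $\phi_2(Q)(\oplus F)\phi_2(Q)$ via Kasparov's stabilization theorem, whereas you invoke the twisted operator's bounded transform directly; your added remarks on smoothing the clutched bundle and essential self-adjointness are sensible refinements rather than a different argument.
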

  
\begin{proof}
The first statement is a result of Theorem \ref{k-homological} as well as the functorality of intersection products 
$$[P]=[\sigma_P]\otimes_{C_0(TX)}[D]=[\sigma_P]\otimes_{C_0(TX)}i^{\ast}[\bar D]=i_{\ast}[\sigma_P]\otimes_{C_0(\Sigma X)}[\bar D]=[V(\sigma_P)]\otimes_{C_0(\Sigma X)}[\bar D].$$  
To prove the second statement, we calculate 
\begin{equation}\label{productinproof}
[V(\sigma_P)]\otimes_{C_0(\Sigma X)}[\bar D]=[(C_0(\Sigma X, V(\sigma_P)),\phi_1, 0)]\otimes_{C_0(\Sigma X)}[(L^2(\Sigma X, \Lambda^{0,\ast}(T^{\ast}(\Sigma X))), \phi_2, F)],
\end{equation}
where $F\doteq\frac{\bar D}{\sqrt{1+\bar D^2}}.$
We denote by $[(H, \eta, I)]$ the $KK$-product appeared in (\ref{productinproof}).   

According to the definition of $KK$-product,  
$$H=C_0(\Sigma X, V(\sigma_P))\otimes_{C_0(\Sigma X)}L^2(\Sigma X, \Lambda^{0,\ast}(T^{\ast}(\Sigma X)))$$ 
and the operator $I$ needs to satisfy the following two conditions \cite{Skandalis:1984}:
\begin{enumerate} 
\item $I$ is an $F$-connection;
\item $I$ has the property $\eta(a)[0\otimes1, I]\eta(a)\ge0$ modulo $\mathcal{K}(H)$.
\end{enumerate}
By Kasparov's stabilization theorem, there is a $C_0(\Sigma X)$-valued projection $Q$ such that $C_0(\Sigma X, V(\sigma_P))=Q(\oplus_{1}^{\infty} C_0(\Sigma X))$. 
Therefore, 
\begin{align*}
H=&Q(\oplus_{1}^{\infty} C_0(\Sigma X))\otimes_{C_0(\Sigma X)}L^2(\Sigma X, \Lambda^{0,\ast}(T^{\ast}(\Sigma X)))\\
=&\phi_2(Q)(\oplus_{1}^{\infty} L^2(\Sigma X, \Lambda^{0,\ast}(T^{\ast}(\Sigma X))),
\end{align*}
where, $\phi_2(Q)$, by definition, acts by matrix multiplication and point-wise multiplication.

We claim that 
\begin{equation}
I=\phi_2(Q)(\oplus_{1}^{\infty} F)\phi_2(Q)\label{claim}
\end{equation}
The statement is proved if (\ref{claim}) is true. 
In fact, one needs only to observe that 
\begin{align*}
H=&\phi_2(Q)(\oplus_{1}^{\infty} L^2(\Sigma X, \Lambda^{0,\ast}(T^{\ast}(\Sigma X)))\\
=&L^2(\Sigma X, \Lambda^{0,\ast}(T^{\ast}(\Sigma X))\otimes V(\sigma_P))
\end{align*} 
and 
$$\phi_2(Q)(\oplus_{1}^{\infty}\bar D)\phi_2(Q)=\bar D_{V(\sigma_P)}\text{ on }H.$$ 
To prove the claim (\ref{claim}), it is sufficient to show the following observations.
\begin{itemize}
\item $(I^2-1)\eta(f)\in\mathcal{K}(H),$ for all $f\in C_0(X);$
\item $[I, \eta(f)]\in\mathcal{K}(H),$ for all $f\in C_0(X);$ 
\item $[\tilde{T_{\xi}}, F\oplus I]\in\mathcal{K}(L^2(\Sigma X, \Lambda^{\ast}(\Sigma X))\oplus H),\forall \xi\in C_0(\Sigma X, V(\sigma_P)),$ where 
$$\tilde{T_{\xi}}=\begin{pmatrix}0&T^{\ast}_{\xi}\\T_{\xi}&0\end{pmatrix}\in\mathcal{B}(L^2(\Sigma X,  \Lambda^{0,\ast}(T^{\ast}(\Sigma X))\oplus H), T_{\xi}\in\mathcal{B}(L^2(\Sigma X,  \Lambda^{0,\ast}(T^{\ast}(\Sigma X)), H)$$ 
is defined by $T_{\xi}(\eta)=\xi\hat{\otimes}\eta\in H.$
\end{itemize}
\end{proof}  

\begin{proposition}\label{ADE}
Let $P$ be a properly supported $G$-invariant elliptic pseudo-differential operator of order $0$, $\bar D$ be the Dolbeault operator on $\Sigma X$ defined in (\ref{barD}) and $V(\sigma_P)$ be the $G$-vector bundle over $\Sigma X$ defined in (\ref{VSigmaP}) Then $P$ and $D_{V(\sigma_P)}$ have the same $L^2$-index, that is,
\begin{equation}\ind P=\ind D_{V(\sigma_P)}.\end{equation}
\end{proposition}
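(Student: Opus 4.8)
The plan is to show that $\ind P$ and $\ind D_{V(\sigma_P)}$ are both obtained by applying ${\tr_G}_{\ast}$ to a single element of $K_0(C^{\ast}(G))$ --- the $K$-theoretic index --- and to identify the two elements using Corollary \ref{imcor}.

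First I would verify that Proposition \ref{same ind} has a counterpart for the twisted Dolbeault operator on $\Sigma X$. The space $\Sigma X$ is again a proper cocompact $G$-manifold, since $\Sigma X \to X$ is a $G$-equivariant fibre bundle with compact fibre; hence the constructions of the previous sections apply verbatim on $\Sigma X$, producing a $K$-theoretic index $\mathrm{Ind}_{\Sigma X}\colon K^0_G(C_0(\Sigma X)) \to K_0(C^{\ast}(G))$ and the trace ${\tr_G}_{\ast}$ on it, which (Remark \ref{trace intuition}) is induced by the canonical trace of $G$ and hence is the same $\R$-valued functional on $K_0(C^{\ast}(G))$ regardless of the underlying manifold. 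Since $\bar D_{V(\sigma_P)} = D_{V(\sigma_P)}$ is a first-order $G$-invariant essentially self-adjoint elliptic differential operator on $\Sigma X$, Lemma \ref{exp tr} and Corollary \ref{MS} give $\ind D_{V(\sigma_P)} = \tr_G e^{-t\bar D_0^{\ast}\bar D_0} - \tr_G e^{-t\bar D_0\bar D_0^{\ast}}$ (with $\bar D_0$ as in the notation of Corollary \ref{MS}); re-running the proof of Proposition \ref{same ind} on $\Sigma X$, but with the heat parametrix $Q = \int_0^t e^{-s\bar D_0^{\ast}\bar D_0}\bar D_0^{\ast}\,\dif s$ (for which $1 - Q\bar D_0$ and $1 - \bar D_0 Q$ are $G$-trace class smoothing operators, by Lemma \ref{exp tr}) in place of a pseudo-differential parametrix, then yields $\ind D_{V(\sigma_P)} = {\tr_G}_{\ast}(\mathrm{Ind}_{\Sigma X}[\bar D_{V(\sigma_P)}])$.

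Next I would transport the class $[\bar D_{V(\sigma_P)}]$ back to $X$ along the bundle projection $q\colon \Sigma X \to X$. By Corollary \ref{imcor}(2), $[P] = j^{\ast}[\bar D_{V(\sigma_P)}]$ in $K^0_G(C_0(X))$, where $j\colon C_0(X) \to C_0(\Sigma X)$ is the $\ast$-homomorphism induced by the proper $G$-map $q$. The key identity is the functoriality $\mathrm{Ind}_X \circ j^{\ast} = \mathrm{Ind}_{\Sigma X}$. Unfolding $\mathrm{Ind} = [p] \otimes j^G(-)$ as in Theorem \ref{Kasparov}, and using that the descent $j^G$ is a functor and that the Kasparov product is associative, one gets $\mathrm{Ind}_X(j^{\ast}[\bar D_{V(\sigma_P)}]) = \big([p_X] \otimes_{C^{\ast}(G,C_0(X))} j^G(j)\big) \otimes_{C^{\ast}(G,C_0(\Sigma X))} j^G([\bar D_{V(\sigma_P)}])$. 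Now $j^G(j)$ is the class of the homomorphism $C^{\ast}(G, C_0(X)) \to C^{\ast}(G, C_0(\Sigma X))$ induced by $j$, and it carries the idempotent $p_X = (c_X \cdot g(c_X))^{1/2}$ to $(q^{\ast}c_X \cdot g(q^{\ast}c_X))^{1/2}$; since $q$ has compact fibres, $c_{\Sigma X} \doteq q^{\ast}c_X = c_X \circ q$ is again a cutoff function on $\Sigma X$, so this image is exactly $p_{\Sigma X}$. Hence $[p_X] \otimes_{C^{\ast}(G,C_0(X))} j^G(j) = [p_{\Sigma X}]$ and the product collapses to $\mathrm{Ind}_{\Sigma X}[\bar D_{V(\sigma_P)}]$.

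Combining the two steps, $\ind P = {\tr_G}_{\ast}(\mathrm{Ind}_X[P]) = {\tr_G}_{\ast}\big(\mathrm{Ind}_X(j^{\ast}[\bar D_{V(\sigma_P)}])\big) = {\tr_G}_{\ast}(\mathrm{Ind}_{\Sigma X}[\bar D_{V(\sigma_P)}]) = \ind D_{V(\sigma_P)}$, the first equality being Proposition \ref{same ind}. \textbf{The main obstacle} I anticipate is the functoriality step $\mathrm{Ind}_X \circ j^{\ast} = \mathrm{Ind}_{\Sigma X}$: one has to verify carefully the descent/Kasparov-product bookkeeping and, above all, that the cutoff function $c_X$ (equivalently the idempotent $p_X$) pulls back under $q$ to a genuine cutoff function (idempotent) on $\Sigma X$. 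This uses compactness of the fibres of $\Sigma X \to X$ essentially, and it is precisely the property that fails for $T^{\ast}X$ (cf.\ the remark following Proposition \ref{KKsymbol}), which is why the detour through $\Sigma X$ rather than $T^{\ast}X$ is forced. A subsidiary technical point is that the argument of Proposition \ref{same ind}, written for order-$0$ pseudo-differential operators, must be re-run for the first-order operator $\bar D_{V(\sigma_P)}$ through its heat parametrix.
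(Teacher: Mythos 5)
Your proposal is correct and follows essentially the same route as the paper: both arguments identify $[P]$ with the class of the twisted Dolbeault cycle via Corollary \ref{imcor}, conclude that the two operators have the same $K$-theoretic index, and then apply ${\tr_G}_{\ast}$ together with Proposition \ref{same ind}. The paper's own proof is just a terser version of this, leaving implicit the two points you spell out (the functoriality $\mathrm{Ind}_X\circ j^{\ast}=\mathrm{Ind}_{\Sigma X}$ via pullback of the cutoff function, and the analogue of Proposition \ref{same ind} for the first-order operator on $\Sigma X$, which the paper defers to the remark closing Section \ref{L^2K}).
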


\begin{proof}
In view of Corollary \ref{imcor}, the cycle 
$$[(L^2(\Sigma X,  \Lambda^{0,\ast}(T^{\ast}(\Sigma X))\otimes V(\sigma_P)), \bar D_{V(\sigma_P)})]$$ 
represents as the intersection product $[V(\sigma_P)]\otimes [\bar D]$, which is the same as $[(L^2(X,E),P)]$ in $K_G^0(C_0(X)).$
This implies that $\mathrm{Ind} P=\mathrm{Ind} D_{V(\sigma_P)}$ and the statement is proved by taking the trace of the $K$-theoretic indices.
\end{proof}

\section{Local index formula.}\label{6}

\subsection{$L^2$-index of Dirac type operators.}

Using Proposition \ref{ADE}, to find a cohomological formula for the $L^2$-index of $P$, it is sufficient to figure out a formula for Dirac type operators.  Let $M$ be an even-dimensional ($\dim M=n$) proper cocompact $G$-manifold with a $G$-Clifford bundle $V$, which is a $\mathrm{\C l}(T^{\ast}M)$-module via Clifford multiplication. Here $\mathrm{\C l}(T^{\ast}M)=\Cl(T^{\ast}M)\otimes\C$ is the complex Clifford algebra generated by $T^{\ast}M.$  We construct $\D$, a Dirac type operator acting on sections in $V$. 
Let $\nabla$ be the  $G$-invariant Levi-Civita connection on $TM$ , which  can be extended to $\Cl(T^{\ast}M)$.
Let $\nabla^V$ be the $G$-invariant {\em Clifford connection} on $V$, i.e. $[\nabla^V, \cl(a)]=\cl(\nabla a), a\in C_c^{\infty}(M,\Cl(T^{\ast}M)).$
A {\em Dirac operator} 
$\D: C_c^{\infty}(M, V)\rightarrow C_c^{\infty}(M, V)$
is defined as the composition of the connection $\nabla^V$ and the Clifford multiplication $\cl: C_c^{\infty}(M,T^{\ast}M\times V)\rightarrow C_c^{\infty}(M,V)$ 
by $$\D=\sum_i\cl(e^i)\nabla^V_{e_i},$$ where $\{e_i\}$ forms an orthonormal basis of the bundle $TM$ and $\{e^i\}$ is the dual basis of $T^{\ast}M$.
Here, $V=V_0\oplus V_1$ is $\Z/2\Z$ graded and $\D$ is essentially self-adjoint with an odd grading, in particular, $\D=\begin{pmatrix}0 &\D_0^{\ast} \\ \D_0 & 0\end{pmatrix}: L^2(M,V)\rightarrow L^2(M,V)$.
The $L^2$-index of $\D$ is expressed by the McKean-Singer formula (\ref{MS}) which is independent of $t$:  
\begin{equation}\ind \D=\str_G(e^{-t\D^2}),\label{mckean singer}\end{equation}
where $\str_G(\begin{pmatrix}a&b\\c&d\end{pmatrix})=\tr_G(a)-\tr_G(d)$ and $\D^2=\begin{pmatrix}\D_0^{\ast}\D_0 & 0 \\ 0 & \D_0\D_0^{\ast}\end{pmatrix}.$

Let $R^V=(\nabla^V)^2\in\Lambda^2(M, \Hom V)$ be the curvature tensor of the Clifford connection $\nabla^V$, then
\begin{align*}
\D^2&=-\sum_{i}(\nabla_{e_i}^V)^2+\sum_i\nabla_{\nabla_{e_i}e_i}^V+\sum_{i<j}\cl(e^i)\cl(e^j)R^V(e_i,e_j)\doteq\Delta^V+\sum_{i<j}\cl(e^i)\cl(e^j)R^V(e_i,e_j)
\end{align*}
is a generalized Laplacian.  
Let $S$ be the spinor (irreducible) representation of $\Cl(T_x^{\ast}M)$. It is a standard fact that 
$\Hom S=S\otimes S^{\ast}=\mathrm{\C l}(T_x^{\ast}M).$
The fiber of the Clifford module $V$ at $x$ has the decomposition 
$V_x=S\otimes W.$
Here $W$ is the set of vectors in $V_x$ that commute with the action of $\mathrm{\C l}(T_x^{\ast}M).$
Therefore on the endomorphism level we have 
\begin{equation}\Hom V_x=\mathrm{\C l}(T_x^{\ast}M)\otimes\Hom W.\label{decomposition}\end{equation} 
Here $\Hom_{\mathrm{\C l}(T_x^{\ast}M)}(V_x)\doteq\Hom W$ is made of the transformations of $V_x$ that commute with $\mathrm{\C l}(T_x^{\ast}M)$. 
According to \cite{BGV} Proposition 3.43, the curvature $R^V$ decomposes under the isomorphism (\ref{decomposition}) into
\begin{equation}R^V=R^S+F^{V/S} \label{decomcur}\end{equation}
where $R^S(e_i, e_j)=\frac14\sum_{kl}(R(e_i,e_j)e_k, e_l)\cl^k\cl^l$ is the action of the Riemannian curvature $R\doteq\nabla^2$ of $M$ on the bundle $V$ and $F^{V/S}\in\Lambda^2(M, \Hom_{\mathrm{\C l}}V)$ is the twisting curvature of the Clifford connection $\nabla^V.$ 
According to the Lichnerowicz Formula, \cite{BGV} Proposition 3.52, the generalized Laplacian is calculated by:
\begin{equation}\label{LF}\D^2=-\sum_{i=1}^n(\nabla^V_{e_i})^2+\sum_i\nabla_{\nabla_{e_i}e_i}^V+\frac14 r_M+\sum_{i<j}F^{V/S}(e_i, e_j)\cl(e_i)\cl(e_j),\end{equation}
where $F^{V/S}(e_i, e_j)\in\Hom_{\mathrm{\C l}}V$ are the coefficients of the twisting curvature $F^{V/S}.$

Let the \emph{heat kernel} $k_t$ be the Schwartz kernel of the solution operator $e^{-t\D^2}$ of the heat equation 
$\displaystyle\frac{\partial}{\partial t}u(t,x)+\D^2u(t,x)=0$. It is a smooth map $M\times M\rightarrow \Hom(V,V)$ satisfying 
$\displaystyle e^{-t\D^2}f(x)=\int_Mk_t(x,y)f(y)\dif y.$ 
Hence $$\ind\D=\int_M c(x)\str k_t(x,x)\dif x.$$
We have the following properties of the heat kernel.
 
 \begin{lemma}\label{ll}
\begin{enumerate}
\item For $f(x)\in L^2(M)$, $e^{-t\D^2}f$ is a smooth section;
\item The kernel $k_t(x,y)$ of $e^{-t\D^2}$ tends to the $\delta$ function weakly, i.e. \par 
$\displaystyle e^{-t\D^2}s(x)=\int_Mk_t(x,x_0)s(x_0)\dif x_0\to s(x)$ uniformly on a compact set in $M$ as $t\to0.$
\end{enumerate}
\end{lemma}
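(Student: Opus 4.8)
The plan is to regard $\D^2$ as the positive, essentially self-adjoint generalized Laplacian of the Dirac operator $\D$ --- essential self-adjointness holds because $M$ is complete, as was already used for the McKean--Singer formula --- so that for each $t>0$ the operator $e^{-t\D^2}$ is defined by the Borel functional calculus of the self-adjoint closure of $\D^2$, is a self-adjoint contraction on $L^2(M,V)$, and $\{e^{-t\D^2}\}_{t\ge 0}$ is a strongly continuous semigroup. Both (1) and (2) then follow by combining three standard ingredients: elementary estimates for the scalar functions $\lambda\mapsto\lambda^m e^{-t\lambda}$ and $\lambda\mapsto e^{-t\lambda}-1$ on $[0,\infty)$; interior elliptic regularity for the elliptic operators $(\D^2)^m$; and the Sobolev embedding theorem. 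I would first record that a compactly supported smooth section $s$ lies in $\Dom\bigl((\D^2)^m\bigr)$ for every $m$, simply because the differential operator $(\D^2)^m$ carries $C_c^\infty(M,V)$ into $L^2(M,V)$, and that $e^{-t\D^2}$ commutes with every power of $\D^2$ on the relevant domains.

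For (1), fix $f\in L^2(M,V)$ and $t>0$. Since $\sup_{\lambda\ge0}\lambda^m e^{-t\lambda}<\infty$ for every $m\in\N$, the spectral theorem gives $e^{-t\D^2}f\in\Dom\bigl((\D^2)^m\bigr)$ with $(\D^2)^m e^{-t\D^2}f\in L^2(M,V)$. Because $(\D^2)^m$ is elliptic of order $2m$, interior regularity yields $\Dom\bigl((\D^2)^m\bigr)\subset H^{2m}_{\mathrm{loc}}(M,V)$, hence $e^{-t\D^2}f\in\bigcap_{m}H^{2m}_{\mathrm{loc}}(M,V)=C^\infty(M,V)$ by Sobolev embedding. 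This is the same smoothing mechanism that makes $e^{-t\D^2}$ an integral operator with smooth kernel $k_t$; compare \cite{Lesch:2009} Appendix B and \cite{BGV} Chapter 2.

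For (2), strong continuity of the semigroup already gives $e^{-t\D^2}s\to s$ in $L^2(M,V)$ as $t\to 0^+$ for every $s\in L^2(M,V)$, so it remains to upgrade this to uniform convergence on compact sets when $s\in C_c^\infty(M,V)$. From the elementary inequality $|e^{-t\lambda}-1|\le t\lambda$ on $[0,\infty)$ and the spectral theorem one gets $\|(e^{-t\D^2}-1)u\|_{L^2}\le t\,\|\D^2 u\|_{L^2}$ for $u\in\Dom(\D^2)$; applying this with $u=(\D^2)^{m-1}s$ and using that $e^{-t\D^2}$ commutes with $\D^2$ shows $(\D^2)^m(e^{-t\D^2}-1)s\to 0$ in $L^2$ as $t\to0^+$. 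Now fix a compact $K\subset M$ and relatively compact open sets $K\subset\Omega\subset\overline\Omega\subset\Omega'\subset M$. The interior elliptic estimate for $(\D^2)^m$ bounds $\|v\|_{H^{2m}(\Omega)}$ by $C\bigl(\|(\D^2)^m v\|_{L^2(\Omega')}+\|v\|_{L^2(\Omega')}\bigr)$, so applying it to $v=(e^{-t\D^2}-1)s$ gives $(e^{-t\D^2}-1)s\to 0$ in $H^{2m}(\Omega)$ for every $m$. Choosing $2m>n/2$ and invoking $H^{2m}(\Omega)\hookrightarrow C^0(K)$ we conclude $e^{-t\D^2}s(x)\to s(x)$ uniformly for $x\in K$, which is the assertion.

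The step I expect to be the main obstacle is not any individual estimate but the passage from the global $L^2$ statements furnished by the functional calculus to the local, pointwise statements in (1) and (2): one must be careful that $\Dom\bigl((\D^2)^m\bigr)$ only controls $H^{2m}_{\mathrm{loc}}$ rather than any global Sobolev norm, and organize the interior elliptic estimates along a nested family of relatively compact sets. Completeness of $M$ enters precisely at the two points where the functional calculus is invoked --- to know that $\D$, and hence $\D^2$ and its powers, are essentially self-adjoint so that $e^{-t\D^2}$ and the spectral estimates above make sense --- after which everything is routine.
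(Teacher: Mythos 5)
Your proof is correct, and it takes a genuinely different route from the paper's. For part (1) the paper does not argue via the functional calculus and elliptic regularity at all: it invokes the earlier Lemma \ref{exp tr} (smoothness of the kernel of $\phi e^{-t\D^2}\psi$, proved there by a parametrix construction for the resolvent), writes $(e^{-t\D^2}f)(x)=\int_G h_g(x)\,\dif g$ with $h_g(x)=\int_M c(g^{-1}x)k_t(x,y)f(y)\,\dif y$ using the cutoff identity $\int_G c(g^{-1}x)\,\dif g=1$, and then uses properness of the action to reduce to integration over a compact set of $g$. Your argument --- $e^{-t\D^2}f\in\Dom\bigl((\D^2)^m\bigr)$ for all $m$ by boundedness of $\lambda^m e^{-t\lambda}$, then $H^{2m}_{\mathrm{loc}}$ by interior regularity, then Sobolev embedding --- is the standard smoothing argument; it is cleaner in that it makes no use of the group action or of the cutoff function, at the price of not re-using the kernel smoothness already established in Lemma \ref{exp tr}. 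For part (2) the comparison is more interesting: the paper's own proof only shows $\langle e^{-t\D^2}u,u\rangle\to\langle u,u\rangle$ by dominated convergence in the spectral measure, which is weak convergence and strictly less than the uniform convergence on compacta that the lemma asserts (and that is actually needed later to pin down $b_0(x_0)=1$ in the asymptotic expansion). Your argument via $|e^{-t\lambda}-1|\le t\lambda$, commutation with powers of $\D^2$, interior elliptic estimates on nested relatively compact sets, and Sobolev embedding genuinely delivers the uniform statement for $s\in C_c^\infty$, so on this point your proof is more complete than the one in the paper. (The only cosmetic slip is an off-by-one in the index when you apply the estimate to $u=(\D^2)^{m-1}s$; since the conclusion holds for every $m$ this is harmless.)
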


\begin{proof}
We have proved that the Schwartz kernel of $ce^{-t\D^2}$ is smooth in Lemma \ref{exp tr}. 
So 
$$(e^{-t\D^2}f)(x)=\int_{G\times M}c(g^{-1}x)k_t(x,y)f(y)\dif y\dif g\doteq\int_Gh_g(x)\dif g,$$ 
where $h_g(x)=\int_Mc(g^{-1}x)k_t(x,y)f(y)\dif y$ is smooth in $x\in M$ for fixed $g\in G$. 
Using the fact that $e^{-t\D^2}$ is a bounded operator and that $c(x)$ is smooth and compactly supported, we conclude that  $h_g(x)$ depends smoothly on $g\in G$ . 
Let $K$ be any compact neighborhood of $x$, then by the properness of the group action, the set 
$$Z\doteq\{g\in G| c(g^{-1}x)\neq0, x\in K, g\in G\}$$ 
is compact and then $\displaystyle(e^{-t\D^2}f)(x)=\int_Z h_g(x)\dif g$ is smooth for $x\in K$. 
Therefore the first statement is proved.

To prove the second one, let $u$ be a smooth function with norm $1.$ 
Then $\displaystyle<e^{-t\D^2}u, u>=\int_{\lambda\in\mathrm{sp}(\D)}e^{-t\lambda^2}\dif P_{u,u}$, where $\mathrm{sp}(\D)$ means the spectrum of $\D$. 
Since the set of integrals for $0<t\le1$ is bounded by $1$, then by the dominated convergence theorem, $$<e^{-t\D^2}u, u>\to \int_{\lambda\in\mathrm{sp}(\D)}1\dif P_{u,u}=<u,u>\text{ as }t\to0.$$
\end{proof}
 
The heat kernel on $\R^n$ of $\displaystyle u_t-\sum_{i=1}^n\frac{\partial^2}{\partial^2 x_i}=0$, which is
\begin{equation}p_t(x,y)=\frac{1}{(4\pi t)^{n/2}}e^{-d(x,y)^2/4t},\label{1htkn}\end{equation}
suggests a first approximation for the heat kernel on $M.$
The small time behavior of the heat kernel $k_t(x,y)$ for $x$ near $y$ depends on the local geometry of $x$ near $y$. This is made precise by the \emph{asymptotic expansion} for $k_t(x,y).$

\begin{definition}[\cite{Roe:1998ad}]
Let $B$ be a Banach space with norm $\|\cdot\|$ and $f:\R^+\rightarrow B: t\mapsto f(t)$ be a function. 
A formal series $\displaystyle\sum_{k=0}^{\infty}a_k(t)$ with $a_k(t)\in E$ is called \emph{an asymptotic expansion} for $f$, denoted by $\displaystyle f(t)\sim\sum_{i=0}^{\infty}a_k(t)$, if for any $m>0$, there are $M_m$ and $\epsilon_m>0$. So that for all $l\ge M_m, t\in(0,\epsilon_m]$, we have 
$$\|f(t)-\sum_{k=0}^l a_k(t)\|\le Ct^m.$$
\end{definition}

When $M$ is compact and when $B=C^0(M, \Hom(V,V))$ has $C^0$-norm $\|f\|=\sup_{x\in M}|f(x)|$, it is the standard fact that the heat kernel $k_t(x,x)$ of $e^{-t\D^2}$ has an asymptotic expansion 
$$k_t(x,x)\sim \frac{1}{(4\pi t)^{n/2}}\sum_{j=0}^{\infty}t^j a_j(x)$$ where $a_j(x)\in\Hom(V_x,X_x), x\in M$ are smooth sections (\cite{Roe:1998ad} Theorem 7.15).
In our case, this theorem is formulated as follows.

\begin{theorem}\label{later1}
Let $M$ be a proper cocompact Riemannian $G$-manifold and $\D$ be an equivariant Dirac type operator acting on the sections of a Clifford bundle $V$, and $k_t$ be the heat kernel of $\D$. There is an asymptotic expansion for $c(x)k_t(x,x)$ under the $C^0$-norm $\|f\|=\sup_{x\in M}|f(x)|: $
\begin{equation}c(x)k_t(x,x)\sim c(x)\frac{1}{(4\pi t)^{n/2}}\sum_{j=0}^{\infty}t^j a_j(x)\label{AE}\end{equation}
 where $a_j\in C^{\infty}(M,\Hom V)$ and $a_j(x)$ depends only on the the geometry at $x$ (involving metrics, connection coefficients and their derivatives). 
In particular $a_0(x)=1.$
The asymptotic expansion works for any $C^l$-norm for $l\ge0$. (We only need and prove the case when $l=0$.)
\end{theorem}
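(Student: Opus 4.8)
The essential observation is that formula (\ref{AE}) is a purely local statement: the cutoff $c$ confines the section $c(x)k_t(x,x)$ to the compact set $K:=\supp(c)\subset M$, and the small-$t$ behaviour of $k_t(x,x)$ for $x\in K$ is dictated by the metric, the Clifford connection and finitely many of their derivatives in an arbitrarily small neighbourhood of $K$. In particular the $G$-action plays no role, and once one has localised the statement reduces to the classical compact case \cite{Roe:1998ad, BGV}. The plan is: (1) localise the problem around $K$; (2) construct a local approximate heat kernel near the diagonal by the standard transport equations; (3) estimate the error it produces in the heat equation; (4) transfer the estimate to the genuine kernel $k_t$ by Duhamel's principle, using the $L^2$-boundedness and the smoothing property of $e^{-t\D^2}$ recorded in Lemma \ref{ll}.

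For step (1), fix a relatively compact open $U$ with $K\subset U$ and an $R>0$ with $\{x:d(x,K)\le R\}\subset U$. Since $M$ is complete, finite propagation speed for the wave group $\cos(s\D)$ together with the spectral theorem yields a Gaussian off-diagonal bound $\|k_t(x,y)\|\le C t^{-n/2}e^{-d(x,y)^2/5t}$ for $x$ in any fixed compact set, and, what matters here, shows that $k_t(x,\cdot)$ for $x\in K$ is insensitive --- modulo an error that is $O(t^\infty)$ in every $C^l$-norm, uniformly for $x\in K$ --- to the geometry of $M$ outside $U$. Hence it suffices to build the expansion from data on $U$, along the same lines as the local parametrix argument already carried out in the proof of Lemma \ref{exp tr}.

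For steps (2)--(4), work in geodesic normal coordinates centred at $y$, with the fibres of $V$ trivialised by parallel transport along radial geodesics, and set
$$k_t^N(x,y)=p_t(x,y)\,\chi(x,y)\sum_{j=0}^{N}t^j\,\Phi_j(x,y),$$
where $p_t$ is the Euclidean heat kernel (\ref{1htkn}), $\chi$ is a $G$-invariant cutoff equal to $1$ near the diagonal and supported where $\exp$ is a diffeomorphism, and the smooth sections $\Phi_j$ of $\Hom V$ are obtained recursively by solving the transport ODEs arising from applying $\partial_t+\D^2$ and substituting the Lichnerowicz formula (\ref{LF}); the normalisation forces $\Phi_0(x,x)=\mathrm{Id}$, and each $\Phi_j(x,y)$ is an integral along the geodesic from $y$ to $x$ of a universal polynomial in the Riemannian curvature, the twisting curvature $F^{V/S}$ and their covariant derivatives, so that $a_j(x):=\Phi_j(x,x)$ depends only on the local geometry at $x$ and $a_0\equiv 1$. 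A routine computation gives $(\partial_t+\D^2)k_t^N(x,y)=t^{N-n/2}e^{-d(x,y)^2/4t}r_t^N(x,y)$ with $r_t^N$ and its derivatives bounded uniformly for $t\in(0,1]$ and $(x,y)$ near $K$, while $k_t^N(x,\cdot)\to\delta_x$ weakly as $t\to0^+$. Duhamel's principle then gives $k_t(x,y)-k_t^N(x,y)=-\int_0^t\big(e^{-(t-s)\D^2}R_s^N\big)(x,y)\,\dif s$, where $R_s^N$ is the above remainder kernel; as $R_s^N$ is concentrated near the diagonal over a compact set and $e^{-(t-s)\D^2}$ is bounded on $L^2(M,V)$ and smoothing, estimating this integral --- after composing with powers of $\D^2$ and invoking Sobolev embedding to reach $C^l$-norms --- yields $\sup_{x\in K}\|k_t(x,x)-k_t^N(x,x)\|_{C^l}=O(t^{\,N+1-n/2-l})$ as $t\to0^+$. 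Choosing $N$ large in terms of $m$ and $l$ produces the asymptotic expansion (\ref{AE}).

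The one step that genuinely has to account for the noncompactness of $M$ is the Duhamel transfer in (4): on a closed manifold the remainder integral is an ordinary integral over $M$ of a smooth family, whereas here one must know that $e^{-(t-s)\D^2}$, applied to a kernel concentrated near the diagonal, still has a smooth kernel with the stated small-$t$ behaviour. This is precisely what the finite-propagation / Gaussian-bound input of step (1) and the smoothing property of Lemma \ref{ll} supply; once locality is in place, each remaining estimate is the compact-manifold computation restricted to a neighbourhood of $K$, and the uniformity in $x$ is automatic because $K$ is compact.
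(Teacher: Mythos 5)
Your proposal is correct and follows essentially the same route as the paper: a local parametrix $p_t\sum_j t^j\Phi_j$ built from the radial transport equations, an estimate of the heat-equation remainder it produces, a Duhamel transfer using the $L^2$-boundedness of $e^{-(t-s)\D^2}$, and Sobolev embedding to pass from Sobolev to sup norms (this is exactly Lemma \ref{impro} and its proof). The only real difference is in how locality is justified: where you invoke finite propagation speed and Gaussian off-diagonal bounds, the paper simply sandwiches the kernel between the compactly supported cutoffs $c$ and $\bar{c}$ and lets the uniqueness/energy argument for the inhomogeneous heat equation absorb the noncompactness, so the extra wave-equation input, while valid, is not needed.
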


To prove Theorem \ref{later1} we constructe an ``approximating heat kernel". 
The proof is a modification of the case of operators on compact manifold (\cite{Roe:1998ad} Theorem 7.15 or \cite{BGV} Chapter 2).
Now $k_t(x,y)$ satisfies the heat equation:
\begin{equation}\frac{\partial}{\partial t}k_t(x,y)+\D^2k_t(x,y)=0, k_0(x,y)=\delta_{y}(x)\label{heat eq}\end{equation}  
where $\D$ operates on the $x$-coordinate only. We fix $y$ and denote it by $x_0$ and solve this equation locally on a coordinate neighborhood $O_{x_0}$ of $x_0$ with $x\in O_{x_0}$. 
We approximate the heat kernel $k_t(x,x_0), x\in O_{x_0}$ locally by looking for a formal solution 
\begin{equation}p_t(x,x_0)\sum_{i=0}^{\infty}t^ib_i(x)\label{asy}\end{equation} 
to the equation (\ref{heat eq}), where 
$\displaystyle p_t(x,x_0)=\frac{1}{(4\pi t)^{\frac{n}{2}}}e^{-\frac{r^2}{4t}}$ with $r=|{\bf x}|=d(x,x_0)$ 
is the heat kernel on Euclidean space (\ref{1htkn}). 
Denote by $\displaystyle s_t(x,x_0)=\sum_{i=0}^{\infty}t^ib_i({x})$ in (\ref{asy}) and so the heat kernel is written as \begin{equation}k_t(x,x_0)=p_t(x,x_0)s_t(x,x_0).\label{k=ps}\end{equation}

According to \cite{Roe:1998ad} equation 7.16, $\D^2$ in (\ref{LF}) on $O_{x_0}$ is calculated by \begin{equation}[\frac{\partial}{\partial t}+\D^2](p_ts_t)=p_t[\frac{\partial}{\partial t}+\D^2+\frac{r}{4gt}\frac{\partial g}{\partial r}+\frac{1}{t}\nabla_{r\frac{\partial}{\partial r}}]s_t.\label{mid term}\end{equation}
when operating on (\ref{k=ps}), where $r=|{\bf x}|$, $g=\det(g_{ij})$ and $(g_{ij})$ is the Riemannian metric on $M$. 
To find the formal solution (\ref{asy}), set the right hand side of (\ref{mid term}) to be $0$. Then the comparison of the coefficients of terms containing $t^i$ for each $i\ge0$ enables us to find $b_i$ inductively via \cite{Roe:1998ad} equation (7.17): 
\begin{equation} \label{hoho}
\nabla_{\frac{\partial}{\partial r}}(r^ig^{\frac14}b_i({x}))=\begin{cases}0 & i=0\\ -r^{i-1}g^{\frac14}\D^2b_{i-1}({x}) & i>0\end{cases}
\end{equation}

(1) (Solve $\alpha_0(x)$) It is trivial to see that $p_t(x,x_0)=\frac{1}{(4\pi t)^{\frac{n}{2}}}e^{-\frac{r^2}{4t}}\to\delta_{x_0}(x)$ uniformly as $t\to0+$. From Lemma \ref{ll}, $k_t(x,x_0)\to \delta_{x_0}(x)$ uniformly as $t\to0+$ for all $x\in K$, where $K\subset X$ is any compact subset. 
Therefore $b_0({x_0})=1$ necessarily. The first line in (\ref{hoho}) indicates that $g^{\frac14}b_0({x})=g({x_0})^{\frac14}b_0({x_0})=1$, and then $b_0({x})=g^{-\frac14}(x)$ is determined by $b_0({x_0}).$

(2) (Solve $b_i(x), i>0$) Inductively the smoothness of $b_i$ implies the uniqueness of the smooth solution $b_{i+1}.$ In fact, when solving the equation in (\ref{hoho}), the constant term has to be $0$ otherwise $b_{i+1}$ is not smooth at $r=0$. Then $b_{i+1}$ is smooth except that it may blow up at $0.$ But by setting $r=0$ in the second line in (\ref{hoho}) we have $b_{i+1}({x_0})=-\frac1j(\D^2b_{i})({x_0})$ which makes sense if $b_i$ is smooth. Therefore, there exists a sequence of smooth sections $\{b_i({x})\}$ in $\Hom(V_{x_0}, V_x)$ uniquely determined by $b_0({x_0})=1.$

Note that $b_i$s are defined on a coordinate neighborhood $O_{x_0}$ and depend smoothly on the local geometry around $x_0$. 
For example, $b_1(x)=\frac{1}{6}k(x)-K(x)$, 
where $k$ is scalar curvature and $K$ satisfies $\D^2=\Delta+K$. 

Denote $b_i(x)$ by $b_i(x, x_0), x\in O_{x_0}$. Now for any $x_0\doteq y\in M$, we obtain a formal solution $b_i(x,y)$ which smoothly depends on both $x$ and $y$ for $x\in O_y$.
Choose $O'\subset M\times M$ such that $\{(x,x)|x\in M\}\subset O'\subset \cup_{y\in M}O_{y}$ and choose 
$$\phi(x,y)\in C^{\infty}(M\times M)\text{ so that }\phi(x,y)=\begin{cases}1 & (x,y)\in O' \\0 & (x,y)\notin \cup_{y\in M}O_y\end{cases}.$$ 
This definition is based on a cutoff function used to define the approximate heat kernel in \cite{BGV} Definition 2.28.

\begin{definition} \label{approxhk}
Let (\ref{k=ps}) be the true heat kernel. The approximating heat kernel is
\begin{equation}h^n_t(x,y)=p_t(x,y)\sum_{i=0}^{n}t^i a_i(x,y),\label{apphk}\end{equation}
where $a_i(x,y)=\phi(x,y)b_i(x,y)\in C^{\infty}(M\times M)$ and supported in a neighborhood of the diagonal.
\end{definition}

With the previous set up we may state the following lemma, which implies Theorem \ref{later1} when setting $x=y.$

\begin{lemma} \label{impro}
Let $k_t(x,y)$ be the heat kernel and $h_t^n(x,y)$ be the one in (\ref{apphk}). Let $c\in C_c^{\infty}(M)$ be a cutoff function of the proper cocompact $G$-manifold $M$. Choose $\bar{c}\in C^{\infty}_c(M)$ satisfying $c(x)\bar{c}(x)=c(x), x\in M.$ For all $m>0,$ there is a $N_m$, so that for all $l>N_m$ and $t\in(0,1],$ 
\begin{equation}\|c(x)h_t^l(x,y)\bar{c}(y)-c(x)k_t(x,y)\bar{c}(y)\|<Ct^m\label{Oh}\end{equation}
where $\|f\|=\sup_{x,y\in M}|f(x,y)|.$ 
\end{lemma}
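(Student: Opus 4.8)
The plan is to run the classical parametrix (Duhamel) argument for the heat kernel on a compact manifold, as in \cite{Roe:1998ad} Theorem~7.15 and \cite{BGV} Chapter~2, while watching the two spots where compactness of $M$ was used there: manipulating the heat semigroup under an integral sign, and the uniform bound $\sup_x\int_M\|k_\tau(x,z)\|\dif z\le C$ for small $\tau$. Here both are available. Completeness of $M$ makes $\D$ essentially self-adjoint, so the heat equation has unique $L^2$ solutions; cocompactness of the $G$-action gives $M$ bounded geometry, hence the Gaussian off-diagonal bound $\|k_\tau(x,z)\|\le C\tau^{-n/2}e^{-d(x,z)^2/C\tau}$ for $\tau\in(0,1]$ (cf.\ \cite{Lesch:2009} Appendix~B), which supplies the required uniform integrability. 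I would also take $\phi$ to be $G$-invariant, so that $h_t^l(gx,gy)=g\,h_t^l(x,y)\,g^{-1}$; this costs nothing and is convenient elsewhere.

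First I would bound the error kernel $e_t^l(x,y)\doteq(\partial_t+\D_x^2)h_t^l(x,y)$. Where $\phi\equiv1$, the recursion (\ref{hoho}) was set up precisely so that, expanding $[\partial_t+\D^2](p_t\sum_{i=0}^{l}t^ib_i)$ via (\ref{mid term}), the coefficients of $t^0,\dots,t^{l-1}$ cancel and one is left with $e_t^l(x,y)=t^l p_t(x,y)(\D^2b_l)(x,y)$ on that set; the extra terms, in which a derivative lands on $\phi$, are supported where $d(x,y)\ge\delta>0$ and are $O(t^N)$ for every $N$ because of the Gaussian factor in $p_t$. Since $\sup_{x,y}p_t(x,y)=(4\pi t)^{-n/2}$ and $\D^2b_l$ is smooth and supported near the diagonal, $e_t^l$ is smooth, for $y$ in any fixed compact set it vanishes unless $x$ lies in a fixed ($t$-independent) compact set, and $\sup\{\|e_s^l(z,y)\|:y\in\supp\bar c\}\le Cs^{\,l-n/2}$ for $s\in(0,1]$.

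Next, fix $y$ and put $u_t(x)=h_t^l(x,y)-k_t(x,y)$. By Lemma~\ref{ll}, $k_t(\cdot,y)\to\delta_y$, and $h_t^l(\cdot,y)\to\delta_y$ as well since $p_t(\cdot,y)\to\delta_y$ and $a_0(y,y)=b_0(y,y)=1$; hence $u_0=0$ and $(\partial_t+\D^2)u_t=e_t^l(\cdot,y)$. Setting $w_t(x)=\int_0^t\!\int_M k_{t-s}(x,z)\,e_s^l(z,y)\,\dif z\,\dif s$, a routine verification using dominated convergence, Lemma~\ref{ll}, and the smoothness and compact support from Step~1 gives $(\partial_t+\D^2)w_t=e_t^l(\cdot,y)$ and $w_0=0$, so $u_t=w_t$ by uniqueness of $L^2$ solutions of the heat equation. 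Multiplying by $c(x)$ and $\bar c(y)$, and noting that for $y\in\supp\bar c$ the variable $z$ is confined to a fixed compact set, Step~1 and the Gaussian bound give
\[
\|c(x)h_t^l(x,y)\bar c(y)-c(x)k_t(x,y)\bar c(y)\|\ \le\ \|c\|_\infty\!\int_0^t\!\Bigl(\sup_x\int_M\|k_{t-s}(x,z)\|\dif z\Bigr)Cs^{\,l-n/2}\dif s\ \le\ C't^{\,l+1-n/2},
\]
so (\ref{Oh}) holds with $N_m=m+n/2-1$. Putting $x=y$ yields the asymptotic expansion (\ref{AE}) with $a_j(x)=b_j(x,x)$, in particular $a_0=1$, which is Theorem~\ref{later1}.

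The hard part is making Step~2 rigorous: on a compact manifold Duhamel's formula and the uniqueness behind it are immediate, whereas here one must invoke completeness of $M$ for uniqueness of $L^2$ heat solutions, and bounded geometry (from cocompactness) for the uniform integrability $\sup_x\int_M\|k_\tau(x,z)\|\dif z\le C$ that makes the Duhamel integral converge and the Step~3 estimate go through. By contrast, Step~1 is a purely local and mechanical consequence of (\ref{mid term})--(\ref{hoho}), and Step~3 is just the resulting integral estimate.
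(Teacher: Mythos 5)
Your proposal is correct and follows the same skeleton as the paper's proof: construct the formal solution, compute the error term $(\partial_t+\D^2)h_t^l$ from (\ref{mid term})--(\ref{hoho}) as $t^lp_t\D^2 a_l+O(t^\infty)$, identify $h_t^l-k_t$ with the Duhamel integral of that error via uniqueness of solutions of the inhomogeneous heat equation with zero initial data, and then estimate. Where you genuinely diverge is in the final estimate. The paper stays in $L^2$-Sobolev norms: it bounds $\|c\,r_\tau\,\bar c\|_{n/2+1}\le Ct^m$, uses (implicitly) that $e^{-(t-\tau)\D^2}$ is bounded on the relevant Sobolev space because it is an $L^2$-contraction commuting with $\D$, and only at the very end converts to the sup norm by Sobolev embedding on the compact support of $c\otimes\bar c$. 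You instead work directly in $C^0$ and pay for it with the uniform integrability $\sup_x\int_M\|k_\tau(x,z)\|\,\dif z\le C$ for $\tau\in(0,1]$, i.e.\ a Gaussian off-diagonal upper bound for the heat kernel of a generalized Laplacian on a bounded-geometry manifold. That bound is available here (cocompact isometric actions do give bounded geometry, the Lichnerowicz potential is bounded by $G$-invariance, and semigroup domination reduces to the scalar case treated in \cite{Lesch:2009}), but it is a nontrivial external input that the paper's route avoids entirely; the trade-off is that your version gives the cleaner exponent $t^{l+1-n/2}$ and hence the explicit threshold $N_m=m+n/2-1$, and it makes transparent exactly which noncompactness issues (uniqueness of $L^2$ solutions, uniform integrability of the kernel) must be checked. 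Your bookkeeping of the $O(t^\infty)$ terms coming from derivatives of $\phi$ and of the compact support in $z$ for $y\in\supp\bar c$ is sound, provided (as you note) $\phi$ is chosen supported in a uniform neighborhood of the diagonal.
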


\begin{proof}
For all $m$, let $N_m>\mathrm{max}\{n+1,m+\frac{n}{2}\}$, where $n=\dim M$. 
By definition $h^{N_m}_t(x,y)$ approximately satisfies the heat equation in the sense that 
\begin{equation}
(\frac{\partial}{\partial t}+\D^2)h^{N_m}=
t^{N_m} p_t(x,y)\D^2 a_{N_m}(x,y)+O(t^{\infty})
\doteq r_t(x,y),\label{partial sol}
\end{equation} 
where the first term in (\ref{partial sol}) comes from the calculation of the formal solution.
In fact, using (\ref{mid term}), (\ref{hoho}), we have $\displaystyle(\frac{\partial}{\partial t}+\D^2)[p_t(x,y)\sum_{j=0}^{N_m}t^jb_j(x,y)]=t^{N_m} p_t(x,y)\D^2 b_{N_m}(x,y)$.
What remains $O(t^{\infty})$ is of order $t^{\infty}$, because this term contains the derivatives of $\phi$, which are of $0$-value for $x$ near $y$, and $p_t(x, y), (x\ne y)$, which decreases faster than any positive power $t^k$ as $t\to0+$.
$r_t(x,y)$ has the following properties:

(1) The remainder $r_t(x,y)$ is smooth for any fixed $t>0$. This is because $p_t(x,y)$ in (\ref{1htkn}) and $a_i(x,y)$s in Definition \ref{approxhk} are smooth functions, for all $t>0$.
 
(2) Denote the $k$th Sobolev norm on $C^m(M\times M)$ by $\|\cdot\|_k.$ Then for all fixed $t>0$ and for all $k$: 
$\|c(x)r_t(x,y)\bar{c}(y)\|_k$ exists. This is because $c(x)r_t(x,y)\bar{c}(y)$ is smooth and compactly supported on $M\times M.$

(3) \label{OMG}We have the estimate $$\|c(x)r_t(x,y)\bar{c}(y)\|_{\frac{n}{2}+1}<Ct^m$$ uniformly for all $t\in(0,1].$
In fact, in the first term $c(x)t^{N_m} p_t(x,y)(\D^2 a_{N_m}(x,y))\bar{c}(y)$ of $c(x)r_t(x,y)\bar{c}(y)$, only $t^{N_m} p_t(x,y)$ depends on $t$, it is sufficient to know the order of $t$ in the $k$th derivative (in $x$ or $y$) of $t^{N_m} p_t(x,y)$, where $k\le\frac{n}{2}+1$ and the order is: $t^{N_m}t^{-\frac{n}{2}}t^{-k}=t^{N_m-\frac{n}{2}-k}$. 
      So $$\|c(x)t^{N_m} p_t(x,y)(\D^2 a_{N_m}(x,y))\bar{c}(y)\|_{\frac{n}{2}+1}\le\sum_{k=0}^{\frac{n}{2}+1}c_kt^{N_m-\frac{n}{2}-k}.$$ 
      Since $N_m>n+1$, there are no terms of non-positive order in $t$ on the right hand side. In addition, since $N_m>\frac{n}{2}+m$, then for all $t\in(0,1]$ , there is a constant $C_1$ so that
$$\|c(x)t^{N_m} p_t(x,y)(\D^2 a_{N_m}(x,y))\bar{c}(y)\|_{\frac{n}{2}+1}\le C_1t^{N_m-\frac{n}{2}}\le C_1t^{m}.$$
      The derivatives of $c(x)O(t^{\infty})\bar{c}(y)$ do not have any terms containing negative power of $t$ so $\|c(x)O(t^{\infty})\bar{c}(y)\|_{\frac{n}{2}+1}<C_2t^m$ for all $t\in(0,1].$ So property (3) is proved.

Next, we use $r_t(x,y)$ to relate $k_t(x,y)$ and $h^{N_m}_t(x,y)$ in the following claim:

{\bf Claim:} There is a unique smooth solution for the following equation: 
\begin{equation}\begin{cases}(\frac{\partial}{\partial t}+\D^2)u_t(x,y)=r_t(x, y) \\ u_0(x,y)=0\end{cases}\label{!}\end{equation} 
Here, $u_t(x,y)$ is regard as a function of $t$ and $x.$

In fact,
It is trivial to check that $u_1=\int_0^te^{-(t-\tau)\D^2}r_{\tau}(x, x_0)\dif\tau$ is smooth and satisfies the equation. 
If $u_2$ is another smooth solution, then $u=u_1-u_2$ satisfies $(\frac{\partial}{\partial t}+\D^2)u=0, u_0=u(t=0)=0.$
Hence 
$$\frac{\dif}{\dif t}\|u\|^2_{L^2}=\frac{\dif}{\dif t}<u,u>=-<u,\D^2u>-<\D^2u,u>=-2\|\D u\|^2_{L^2}$$ 
implies that $\|u\|^2$ is non-decreasing in $t$, and so $\|u(t=0)\|=0$ forces $u=u_1-u_2=0.$ So the claim is proved.

Since $h^{N_m}_t(x,y)-k_t(x,y)$ is also a solution to the equation (\ref{!}),
by the uniqueness of solution we have that
$\displaystyle h^{N_M}_t(x,y)-k_t(x,y)=\int_0^te^{-(t-\tau)\D^2}r_{\tau}(x, y)\dif\tau.$
Then for all $t\in(0,1]$,
$$ \|c(x)k_t(x,y)\bar{c}(y)-c(x)h^{N_m}_t(x,y)\bar{c}(y)\|_{\frac{n}{2}+1}\le t\sup\{\|c(x)r_{\tau}(x,y)\bar{c}(y)\|_{\frac{n}{2}+1}|0\le\tau\le t\}\le Ct^m,$$
where the second inequality is because of property (3).

By the Sobolev embedding theorem, for all $p>\frac{n}{2}$, $\|u\|\le C_0\|u\|_p$ for $u\in H^p$, where $\|\cdot\|$ is the $C^0$ sup norm and $\|\cdot\|_p$ is the Sobolev $p$-norm. Therefore,
\begin{align*}
\|c(x)k_t(x,y)\bar{c}(y)-c(x)h^{N_m}_t(x,y)\bar{c}(y)\| &\le C'\|c(x)k_t(x,y)\bar{c}(y)-c(x)h^{N_m}_t(x,y)\bar{c}(y)\|_{\frac{n}{2}+1}
\\& \le C'Ct^m. 
\end{align*}
In fact, since $c(x)$ and $\bar{c}(x_0)$ are compactly supported, the function in the norm is supported in a compact set in $M\times M$, where the theorem can be applied.
\end{proof}

\begin{remark}\label{addtr}
From (\ref{AE}) $\displaystyle\lim_{t\to0+}c(x)\str k_t(x,x)=\lim_{t\to0+}c(x)\frac{1}{(4\pi t)^{n/2}}\sum_{j=0}^{l}t^j \str a_j(x)$ for sufficiently large $l$. To calculate the left hand side it is sufficient to investigate $a_j$s on the right hand side.
\end{remark}

If $a\in\Hom V_x$ then $a$ has a decomposition $a=b\otimes c, b\in\mathrm{\C l}(T_x^{\ast}M), c\in\Hom W$ as in $(\ref{decomposition}).$
The super-trace $\str a$ is then calculated by
$\str(b\otimes c)=\tau(b)\cdot\str^{V/S}(c)$
where $\str^{V/S}$ is the super-trace on $\C$-linear endomorphisms of $W$ under the identification $\Hom_{\C l(T_x^{\ast}M)}(V_x)=\Hom_{\C}(W)$
and $\tau_s$ is the the super-trace on $\Hom S=S\otimes S^{\ast}=\mathrm{\C l}(T_x^{\ast}M)$. 
The super-trace $\tau_s$ on $\mathrm{\C l}(T_x^{\ast}M)$ is explicitly calculated by \cite{BGV} Proposition 3.21.
Let $c=\sum c_{i_1i_2\cdots i_k}e^{i_1}e^{i_2}\cdots e^{i_k}$ be an element in $\mathrm{\C l}(T_x^{\ast}M)=\Hom(S)$, where $c_{i_1i_2\cdots i_k}, 1\le i_1\le i_2\le\cdots\le i_k\le n$ is the coefficient of the element $e^{i_1}e^{i_2}\cdots e^{i_k}$ in $\mathrm{\C l}(T_x^{\ast}M).$ Then 
\begin{equation}\tau_s(c)=(-2i)^{\frac{n}{2}}c_{12\cdots n}.\label{3}\end{equation}
The Clifford algebra $\mathrm{\C l}(T_x^{\ast}M)$ is a filtered algebra, more specifically, $\mathrm{\C l}(T_x^{\ast}M)=\mathrm{\C l}(\R^n)=\cup_{i=0}^n\mathrm{\C l}_i$. 
Here $\mathrm{\C l}_i$ is the linear combination of $e^{j_1}\cdots e^{j_k}, k\le i$.
In proving Theorem \ref{later1}, the following lemma is obtained as a corollary.

\begin{lemma}\label{imlem}
Let $a_i(x)$ be the $i$th term in the asymptotic expansion. 
Then 
\begin{equation}a_i(x)\in\mathrm{\C l}_{2i}\otimes\Hom_{\C l(T_x^{\ast}M)}(S_x).\label{4}\end{equation}  
\end{lemma}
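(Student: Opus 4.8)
The plan is to follow the transport recursion (\ref{hoho}) and track the Clifford filtration degree of each coefficient $b_i(x,x_0)$, then restrict to the diagonal. Recall from (\ref{decomposition}) the decomposition $\Hom V_x=\mathrm{\C l}(T_x^{\ast}M)\otimes\Hom_{\mathrm{\C l}}V_x$ and the Clifford filtration $\mathrm{\C l}_0\subset\mathrm{\C l}_1\subset\cdots\subset\mathrm{\C l}_n$; set $\mathcal F_k\doteq\mathrm{\C l}_k\otimes\Hom_{\mathrm{\C l}}V_x$. Since $a_i(x)=\phi(x,x)b_i(x,x)$ and $\phi\equiv1$ near the diagonal, it suffices to prove $b_i(x_0,x_0)\in\mathcal F_{2i}$ for every $x_0\in M$.

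First I would fix geodesic normal coordinates $(x^1,\dots,x^n)$ centred at $x_0$ together with a synchronous orthonormal frame of $V$ on $O_{x_0}$, obtained by radially parallel transporting a frame of $V_{x_0}$ with the Clifford connection $\nabla^V$; this is precisely the frame in which (\ref{hoho}) holds, since then $\nabla^V_{\partial/\partial r}$ acts on components as $\partial/\partial r$. In this frame each $b_i(\cdot,x_0)$ becomes a smooth $\End V_{x_0}=\mathrm{\C l}(T^{\ast}_{x_0}M)\otimes\Hom_{\mathrm{\C l}}V_{x_0}$-valued function, the connection one-form $\omega$ of $\nabla^V$ satisfies $\sum_j x^j\omega_j\equiv0$ (hence $\omega(x_0)=0$ and $\omega_j(x)=O(|x|)$), and the linearization of $\omega_j$ at $x_0$ is built from the curvature $R^V$, whose only Clifford-degree-$2$ part is $R^S$ by the decomposition (\ref{decomcur}), the remainder $F^{V/S}$ lying in $\Hom_{\mathrm{\C l}}V$.

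Then I would run an induction on $i$. The base case $b_0=g^{-1/4}\in\mathcal F_0$ is immediate. For the inductive step one needs that $\D^2$, acting on the $x$-variable of an $\mathcal F_{2(i-1)}$-valued function, lands in an $\mathcal F_{2i}$-valued function. Using the Lichnerowicz formula (\ref{LF}): the scalar term $\tfrac14 r_M$ has Clifford degree $0$; the twisting term $\sum_{j<k}F^{V/S}(e_j,e_k)\,\cl(e_j)\cl(e_k)$ has $F^{V/S}\in\Hom_{\mathrm{\C l}}V$ (degree $0$) and $\cl(e_j)\cl(e_k)$ of degree $\le2$, so it raises the filtration by at most $2$; and the connection-Laplacian part $-\sum_j(\nabla^V_{e_j})^2+\sum_j\nabla^V_{\nabla_{e_j}e_j}$ produces, through the spin part of $\omega$, terms of apparent Clifford degree up to $4$ (the $\omega_j^2$ terms). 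This is the one genuine point: those degree-$4$ terms carry a factor $\omega_j^2=O(|x|^2)$, so I would run the induction with the \emph{combined} (Getzler) weight in which $x^j$ and $\cl^j$ each count $+1$ and $\partial_j$ counts $-1$; under this bookkeeping $\D^2$ raises total weight by exactly $2$ with no exception, while radial integration and multiplication by the scalars $r^{\pm i},g^{\pm1/4}$ in (\ref{hoho}) preserve it, so $b_i$ has combined weight $\le2i$. Restricting to the diagonal sets every $x^j=0$, which kills all contributions of positive coordinate weight and leaves $b_i(x_0,x_0)$ of pure Clifford degree $\le2i$, that is, (\ref{4}).

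The main obstacle is exactly this reconciliation: the crude bound on $(\nabla^V_{e_j})^2$ gives Clifford degree $4$, not $2$, and getting the sharp bound $\mathcal F_{2i}$ forces one to exploit the vanishing $\omega(x_0)=0$ together with the normal-coordinate weights, which is the content of Getzler's rescaling \cite{Getzler:1986}. An alternative is Gilkey's invariance-theoretic description \cite{Gilkey:1973, Gilkey:1974} of the $a_i$ as universal polynomials of weight $2i$ in the jets of the metric and of $F^{V/S}$, which Weyl's theorem expresses through curvature tensors, each carrying Clifford degree at most $2$ per unit of weight. I would present the Getzler-weight version, since it attaches directly to the recursion (\ref{hoho}) already established in the proof of Theorem \ref{later1}.
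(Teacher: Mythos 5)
Your proposal runs the same induction on the recursion (\ref{hoho}) as the paper does, but it is substantially more careful, and the extra care is not gratuitous. The paper's own proof sets $x=y$ to get $\alpha_j(y,y)=-\tfrac1j(\D^2\alpha_{j-1})(y,y)$ and then closes the induction by observing that ``$\D^2$ contains the factor $\cl(e_i)\cl(e_j)$.'' Read literally, that argument only accounts for the curvature and twisting terms of the Lichnerowicz formula (\ref{LF}); it ignores both the $\omega_j^2$ terms of the connection Laplacian (which have Clifford degree $4$ away from the centre of the synchronous frame) and the fact that $(\D^2\alpha_{j-1})(y,y)$ depends on the off-diagonal $2$-jet of $\alpha_{j-1}(\cdot,y)$, about which the purely diagonal induction says nothing. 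You have correctly identified this as the one genuine difficulty, and your fix --- tracking Clifford degree jointly with the vanishing order in normal coordinates, i.e.\ Getzler's weight --- is the standard and correct repair; the alternative via Gilkey's invariance theory that you mention is also valid. So your route is the paper's route done right, and it buys the sharp bound $\mathrm{\C l}_{2i}$ that the naive degree count (which compounds to roughly $4i$) does not deliver.

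One correction to your bookkeeping: as stated, your weight ($x^j$ and $\cl^j$ both $+1$, $\partial_j$ counting $-1$) does not make $\D^2$ homogeneous of weight $2$ --- under it $\partial_j^2$ has weight $-2$ and $\omega_j^2$ has weight $+6$. The convention that works (and the one implicit in your own remark that the degree-$4$ terms ``carry a factor $\omega_j^2=O(|x|^2)$'') is $\cl^j\mapsto+1$, $x^j\mapsto-1$, $\partial_j\mapsto+1$: then every term of $\D^2$ in the synchronous frame has weight at most $2$, the radial integration and the factors $r^{\pm i}$, $g^{\pm1/4}=1+O(|x|^2)$ do not raise the weight, and a monomial $x^{\alpha}\cl^{\beta}$ of $b_i$ satisfies $|\beta|-|\alpha|\le 2i$; restriction to the diagonal keeps only the $\alpha=0$ monomials and yields (\ref{4}). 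With that sign fixed, the argument is complete.
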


\begin{proof}
 We define $a_i(x)=a_i(x,x)$ to be $\alpha(x,x).$ We need to show that $\alpha_i({y},y)\in\mathrm{\C l}_{2i}\otimes \Hom_{\mathrm{\C l}}(V_{y})$. Set $x=y$ in (\ref{hoho}), then 
$$\alpha_0({y},y)=1\text{ and }\alpha_j({y},y)=-\frac1j(\D^2\alpha_{j-1})({y},y).$$ 
with $\alpha_0({y},y)=1\in\mathrm{\C l}_{0}\otimes \Hom_{\mathrm{\C l}}(V_{y})$. 
Inductively, the fact that $\D^2$ contains the factor $\cl(e_i)\cl(e_j)$, makes sure that the degree of $\alpha_i(x)$ does not increase by more than $2$ compared to that of $\alpha_{i-1}(x).$ 
\end{proof}

\begin{remark}
As a consequence of (\ref{3}) and (\ref{4}) we have $\str a_i(x)=0$ for $i\le\frac{n}{2}.$
Therefore $\displaystyle \ind\D=\frac{1}{(4\pi t)^{\frac{n}{2}}}\sum_{i\ge\frac{n}{2}}t^i\int_Mc(x)\str(a_i(x))\dif x.$
Furthermore, since the index is independent of $t$ and $n$ is even, we have the following theorem.
\end{remark}

\begin{theorem}\label{imthm}
The index of the graded Dirac operator $\D$ is equal to 
\begin{equation}\ind\D=\frac{1}{(4\pi)^{\frac{n}{2}}}\int_Mc(x)\str(a_{n/2}(x))\dif x.\label{inter}\end{equation}
\end{theorem}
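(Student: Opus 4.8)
The plan is to feed the asymptotic expansion of Theorem~\ref{later1} into the identity $\ind\D=\int_M c(x)\str k_t(x,x)\dif x$, which by the McKean--Singer formula (\ref{mckean singer}) holds for every $t>0$, and then let $t\to0+$, keeping track of which heat coefficients survive the supertrace.

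First I would integrate the asymptotic expansion (\ref{AE}) against $c$. Since $\supp c$ is compact, hence of finite volume, and since (\ref{AE}) holds uniformly in the $C^0$-norm, for every $m>0$ there are $N_m,\epsilon_m>0$ so that for all $l\ge N_m$ and $t\in(0,\epsilon_m]$
\[
\Big|\int_M c(x)\str k_t(x,x)\dif x-\frac{1}{(4\pi t)^{n/2}}\sum_{j=0}^{l}t^j\int_M c(x)\str a_j(x)\dif x\Big|\le C\,\mathrm{vol}(\supp c)\,t^m .
\]
Thus the numerical function $t\mapsto\int_M c(x)\str k_t(x,x)\dif x$ admits the asymptotic expansion $\frac{1}{(4\pi t)^{n/2}}\sum_{j\ge0}t^j\int_M c(x)\str a_j(x)\dif x$ as $t\to0+$.

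Next I would discard the low-order terms and pass to the limit. By Lemma~\ref{imlem}, $a_j(x)\in\mathrm{\C l}_{2j}\otimes\Hom_{\C l(T_x^{\ast}M)}(S_x)$, and by the supertrace formula (\ref{3}) the supertrace $\tau_s$ annihilates every element of $\mathrm{\C l}(T_x^{\ast}M)$ of filtration degree strictly below $n$; hence $\str a_j(x)=0$ pointwise whenever $2j<n$, that is, whenever $j<n/2$ (note $n/2\in\Z$ since $n$ is even). So the expansion above in fact begins at $j=n/2$:
\[
\int_M c(x)\str k_t(x,x)\dif x\;\sim\;\frac{1}{(4\pi)^{n/2}}\Big(\int_M c(x)\str a_{n/2}(x)\dif x+t\int_M c(x)\str a_{n/2+1}(x)\dif x+\cdots\Big).
\]
But the left-hand side equals $\ind\D$ for every $t>0$, hence is constant in $t$; taking $m=1$ in the bound above and letting $t\to0+$ kills every term carrying a positive power of $t$, and we are left with
\[
\ind\D=\frac{1}{(4\pi)^{n/2}}\int_M c(x)\str a_{n/2}(x)\dif x .
\]

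The only subtle point — and the one I would flag as the main, if mild, obstacle — is the first step: upgrading the $C^0$-valued asymptotic expansion of Theorem~\ref{later1} to an honest asymptotic expansion of the \emph{number} $\int_M c(x)\str k_t(x,x)\dif x$. This is exactly where the compact support of the cutoff $c$ enters, guaranteeing that the uniform $C^0$-bound integrates to a uniform numerical bound. Everything else — constancy in $t$ from the McKean--Singer formula, the degree bound on the heat coefficients from Lemma~\ref{imlem}, and the explicit form of $\tau_s$ in (\ref{3}) — is already available.
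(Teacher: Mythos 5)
Your proposal is correct and follows essentially the same route as the paper: substitute the $C^0$-asymptotic expansion of Theorem \ref{later1} into $\ind\D=\int_Mc(x)\str k_t(x,x)\dif x$, kill the terms with $j<n/2$ via Lemma \ref{imlem} and the supertrace formula (\ref{3}), and use $t$-independence of the index as $t\to0+$. Your explicit justification of integrating the uniform $C^0$-bound over the compact support of $c$ is a point the paper leaves implicit (Remark \ref{addtr}), and your vanishing condition $j<n/2$ is the correct form of the paper's statement ``$\str a_i(x)=0$ for $i\le\frac n2$'', which is evidently a typo.
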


The element $\str(a_{\frac{n}{2}}(x))$ in (\ref{inter}) can be calculated analytically in terms of differential forms on $M$. 
To calculate $\str(a_{n/2}(x))\in\Hom(V_x)$, we localize the operator $\D$ and the heat kernel $k_t(x,y)$ at a point $x$.
Because the local calculation is irrelevant to $M$ being compact or not, 
 we use the classical calculation of $\str a_{\frac{n}{2}}$ on a compact manifold without modification.Therefore, $\str(a_{\frac{n}{2}}(x))$ is the $n$ form part of 
$\displaystyle{\det}^{\frac12}(\frac{R/2}{\sinh R/2})\tr^{V/S}(e^{-F}).$ 
For details, please refer to \cite{BGV} Chapter 4.
Finally we obtain the following main theorem of this subsection. 

\begin{theorem}\label{formula!}
Let $R$ be the curvature 2-form with respect to the Levi-Civita connection on the manifold (on $TM$).
Then,
\begin{equation} \ind\D=\int_Mc(x)\hat{A}(M)\cdot \mathrm{ch}(V/S).\label{formula}
\end{equation} 
where $\displaystyle\hat{A}(M)={\det}^{\frac12}(\frac{R/4\pi i}{\sinh{R/4\pi i}})$ is the $\hat{A}$-class of $TM$ and 
$\displaystyle\mathrm{ch}(V/S)=\tr^{V/S}(e^{-F^{V/S}})$ is the relative Chern character, i.e. Chern character of the twisted curvature $F^{V/S}$ of the bundle $S$.
\end{theorem}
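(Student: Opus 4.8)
The plan is to derive the formula from Theorem~\ref{imthm}, which has already reduced the computation to the single local quantity $\str(a_{n/2}(x))$: by Lemma~\ref{imlem} together with the supertrace identity (\ref{3}) the coefficients $a_i(x)$ with $i<n/2$ contribute nothing, so that $\ind\D=(4\pi)^{-n/2}\int_M c(x)\,\str(a_{n/2}(x))\,\dif x$. Everything therefore comes down to identifying the function $x\mapsto\str(a_{n/2}(x))$ with a curvature integrand on $M$.

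The first thing I would stress is that $a_{n/2}(x)$ is a purely local object. The transport equations (\ref{hoho}) express each $b_i(\cdot,x_0)$, hence each $a_i(x)=a_i(x,x)$, through a finite jet at $x$ of the Riemannian metric and of the Clifford connection $\nabla^V$; the relevant curvature data are exactly those entering the Lichnerowicz formula (\ref{LF}), namely the Riemannian curvature $R$ and the twisting curvature $F^{V/S}$ of (\ref{decomcur}). Hence $a_{n/2}(x)$ is given by the same universal polynomial in these jets as on a closed manifold, and neither the group action nor the global geometry of $M$ enters; this is what lets us import the compact-case computation of $\str(a_{n/2})$ verbatim.

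Next I would run Getzler's rescaling argument fiberwise at a fixed $x$: working in geodesic normal coordinates with a synchronous trivialization of $V$, rescale the Clifford generators and the time variable and observe that the rescaled operator $\D^2$ degenerates, as the scaling parameter tends to $0$, to the generalized harmonic oscillator $-\sum_i\bigl(\partial_i-\tfrac{1}{4}\sum_j R_{ij}x^j\bigr)^2+F^{V/S}(x)$. Its heat kernel at the origin at time one is Mehler's kernel, and matching Clifford and form degrees identifies the top Clifford component of $a_{n/2}(x)$ with the degree-$n$ part of ${\det}^{1/2}\!\bigl(\tfrac{R/2}{\sinh(R/2)}\bigr)\,\tr^{V/S}\!\bigl(e^{-F^{V/S}(x)}\bigr)$, as in \cite{BGV} Chapter~4. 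Applying the supertrace formula $\tau_s(c)=(-2i)^{n/2}c_{12\cdots n}$ of (\ref{3}) to this top component and multiplying by the prefactor $(4\pi)^{-n/2}$, the accumulated powers of $2$ and $i$ recombine into the Chern--Weil normalization $2\pi i$, turning $R/2$ into $R/4\pi i$; this gives $(4\pi)^{-n/2}\str(a_{n/2}(x))=\bigl[\hat A(M)\cdot\mathrm{ch}(V/S)\bigr]_{[n]}$. Feeding this back into Theorem~\ref{imthm} and noting that pairing a function with a mixed-degree form only sees its top-degree part, we obtain $\ind\D=\int_M c(x)\,\hat A(M)\cdot\mathrm{ch}(V/S)$.

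The delicate point is the constant bookkeeping in the previous paragraph: one must track the factor $(-2i)^{n/2}$ coming from (\ref{3}), the Gaussian normalization $(4\pi)^{-n/2}$, the Jacobian and volume factors produced by Mehler's formula, and the $2\pi i$-convention that makes $\hat A$ and $\mathrm{ch}$ represent (integral) de Rham classes, and check that they assemble into precisely $\hat A(M)={\det}^{1/2}\!\bigl(\tfrac{R/4\pi i}{\sinh(R/4\pi i)}\bigr)$. A secondary but, in our situation, routine matter is to confirm that Getzler's rescaling estimates --- usually stated for closed manifolds --- carry over here; this is immediate, since Theorem~\ref{later1} already provides the asymptotic expansion of $c(x)k_t(x,x)$ uniformly on $M$ and the rescaling is performed at a single point, so the local analysis is literally the one on a compact manifold.
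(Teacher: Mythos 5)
Your proposal is correct and follows essentially the same route as the paper: reduce to $\str(a_{n/2}(x))$ via Theorem~\ref{imthm} and Lemma~\ref{imlem}, observe that $a_{n/2}(x)$ is a local invariant unaffected by the group action, and then import the compact-manifold Getzler/Mehler computation of \cite{BGV} Chapter~4 to identify it with the degree-$n$ part of $\hat{A}(M)\cdot\mathrm{ch}(V/S)$. The paper simply cites \cite{BGV} for that last step, whereas you spell out the rescaling and the constant bookkeeping; the content is the same.
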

\vspace{.3cm}

\subsection{Conclusion.}\label{4.3}

In this subsection we will figure out $\ind D_{V(\sigma_P)}$ where $D$ is the Dolbeault operator on $\Sigma X$, and where $V(\sigma_P)$ is a bundle over $\Sigma X.$ $D_{V(\sigma_P)}$ is a generalized Dirac operator and we calculate the case when $\D=D_{V(\sigma_P)}$, $M=\Sigma X$ in the previous subsections.
Firstly we have the following proposition, as a corollary to Theorem \ref{formula!}.

\begin{proposition}\label{key}
Let $G$ be a locally compact unimodular group and let $M$ be proper cocompact $G$-manifold of dimension $n$ having an almost complex structure, curvature $R$, a cutoff function $c\in C_c^{\infty}(M)$ and a $G$-bundle $E$ with curvature $F$. Let $D: L^2(M, \Lambda^{0,\ast}T^{\ast}M)\rightarrow L^2(M, \Lambda^{0,\ast}T^{\ast}M)$ be the Dolbeault operator on $M$. Then the $L^2$-index of the twisted Dirac operator $D_E$ is, 
$$\ind D_E=\int_Mc\mathrm{Td}(M)\mathrm{ch}(E),$$
where $\mathrm{Td}(M)=\mathrm{det}(\frac{R}{1-e^R})$ and 
$\mathrm{ch}(E)=\mathrm{tr}_s(e^{-F})$. 
\end{proposition}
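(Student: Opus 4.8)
The plan is to deduce Proposition~\ref{key} from Theorem~\ref{formula!} by exhibiting $D_E$ as the Dirac operator attached to a Clifford bundle and then carrying out the classical Chern--Weil identification of the resulting integrand. First I would fix the Clifford module: the almost complex structure makes $T^{\ast}M$ a complex bundle, and $V\doteq\Lambda^{0,\ast}T^{\ast}M\otimes E$ becomes a $\Z/2\Z$-graded $\mathrm{\C l}(T^{\ast}M)$-module via $\cl(\xi)=\sqrt2\,(\xi^{0,1}\wedge\,\cdot\,-\iota_{\xi^{1,0}})\otimes 1$. The connection on $\Lambda^{0,\ast}T^{\ast}M$ induced by the $G$-invariant Levi-Civita connection, tensored with a $G$-invariant Hermitian connection on $E$ of curvature $F$, is a $G$-invariant Clifford connection, and the associated Dirac operator $\D=\sum_i\cl(e^i)\nabla^V_{e_i}$ of Section~\ref{6} is the twisted Dolbeault operator $D_E$ (the usual identification of the Dolbeault operator with the $\mathrm{Spin}^{c}$ Dirac operator for the canonical $\mathrm{Spin}^{c}$ structure; any lower-order discrepancy, coming from non-integrability of the almost complex structure on a manifold like $\Sigma X$, is immaterial for the $L^2$-index, cf. Proposition~\ref{same ind}). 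Theorem~\ref{formula!} then gives
\[
\ind D_E=\int_M c(x)\,\hat A(M)\cdot\ch(V/S).
\]

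Next I would compute $\ch(V/S)$. The relative (twisting) curvature of the Clifford connection on $V=\Lambda^{0,\ast}T^{\ast}M\otimes E$ decomposes additively, $F^{V/S}=F^{\Lambda^{0,\ast}T^{\ast}M/S}\otimes 1+1\otimes F$, so the relative Chern character is multiplicative, $\ch(V/S)=\ch(\Lambda^{0,\ast}T^{\ast}M/S)\cdot\tr_s(e^{-F})=\ch(\Lambda^{0,\ast}T^{\ast}M/S)\cdot\ch(E)$. Since $n=\dim_{\R}M$ is even, the spinor module $S_x$ and $\Lambda^{0,\ast}T_x^{\ast}M$ both have dimension $2^{n/2}$, so the commutant bundle $W$ for $\Lambda^{0,\ast}T^{\ast}M$ is a line bundle (a square root of $\det_{\C}TM$), hence $\ch(\Lambda^{0,\ast}T^{\ast}M/S)=e^{-F^{\Lambda^{0,\ast}T^{\ast}M/S}}$, and the Chern--Weil representatives satisfy the elementary identity
\[
\hat A(M)\cdot e^{-F^{\Lambda^{0,\ast}T^{\ast}M/S}}=\mathrm{Td}(M),
\]
which is the local Riemann--Roch form $\hat A\cdot e^{c_1/2}=\mathrm{Td}$ in the normalization of the statement. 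Combining the two displays with the multiplicativity above yields $\ind D_E=\int_M c(x)\,\mathrm{Td}(M)\,\ch(E)$.

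Essentially everything here is routine given Theorem~\ref{formula!}; the only steps deserving care are (a) checking that the connection on $\Lambda^{0,\ast}T^{\ast}M$ induced from Levi-Civita is genuinely a Clifford connection and isolating its twisting curvature, and (b) the Chern--Weil identity relating $\hat A(M)$, the twisting character of the Dolbeault module, and $\mathrm{Td}(M)$. Both are classical and local in the jet of the metric and the almost complex structure at a point, hence insensitive to the non-compactness of $M$ or to the $G$-action, so I would import them from \cite{BGV} Chapters~3 and~4. The main (mild) obstacle is therefore bookkeeping: matching the conventions of Section~\ref{6} (Clifford bundles, $\str$, and the normalizations of $\hat A$ and $\mathrm{Td}$ used in the statement) with those of \cite{BGV}, so that the quoted local computation of $\str a_{n/2}$ applies verbatim.
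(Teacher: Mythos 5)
Your proposal is correct and follows essentially the same route as the paper: realize $D_E$ as the Dirac operator of the Clifford module $\Lambda^{0,\ast}T^{\ast}M\otimes E$ with its induced Clifford connection, apply Theorem \ref{formula!}, split the twisting curvature as $F^{V/S}=\tfrac12\Tr R+F^E$ (the line-bundle part plus the coefficient curvature), and invoke the Chern--Weil identity $\hat A(M)\,e^{\frac12\Tr R}=\mathrm{Td}(M)$. The paper merely carries out the curvature bookkeeping more explicitly in a local frame $z_j,\bar z_j$, whereas you cite the classical identities; the substance is the same.
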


Both $\mathrm{Td}(M)$ and $\mathrm{ch}(E)$ are $G$-invariant forms. So the integral does not depend on the choice of the cutoff function. If $M=\Sigma X$, then the cutoff function on $M$ can be obtained from the cutoff function on $X$ by setting the values of the elements in the same fiber to be the same. 
The following index formula is immediate assuming the proposition.

\begin{theorem}\label{most important}
Let $X$ be a complete Riemannian manifold where a locally compact unimodular group $G$ acts properly, cocompactly and isometrically. If $P$ is a zero order properly supported elliptic pseudo-differential operator, then the $L^2$ index of $P$ is given by the formula 
\begin{equation}\label{index formula} \ind P=\int_{TX}c(x)(\hat{A}(X))^2 \mathrm{ch}(\sigma_P).\end{equation}
\end{theorem}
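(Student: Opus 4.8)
The proof of Theorem \ref{most important} is short once Propositions \ref{ADE} and \ref{key} are available; the content is the $G$-equivariant version of the classical Atiyah--Singer passage from the double of the ball bundle to the (co)tangent bundle. First I would invoke Proposition \ref{ADE} to write $\ind P = \ind D_{V(\sigma_P)}$, where $D$ is the Dolbeault operator on the even-dimensional proper cocompact $G$-manifold $\Sigma X = B(X)\cup_{S(X)}B(X)$ and $V(\sigma_P)=V(\sigma_{P_0})\oplus V(\sigma_{P_0^{\ast}})$ is the $\Z/2\Z$-graded $G$-bundle (\ref{VSigmaP}). Applying Proposition \ref{key} (equivalently Theorem \ref{formula!}) with $M=\Sigma X$ to the two summands of $V(\sigma_P)$, and taking on $\Sigma X$ the cutoff function obtained by pulling back $c$ along $\Sigma X\to X$ (so that it is constant on the fibres), I get
$$\ind P=\int_{\Sigma X}c\cdot\mathrm{Td}(\Sigma X)\cdot\bigl(\mathrm{ch}(V(\sigma_{P_0}))-\mathrm{ch}(V(\sigma_{P_0^{\ast}}))\bigr),$$
a $G$-invariant de Rham integral whose value is independent of the chosen cutoff.

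Next I would compute the Todd class near the zero section. Over the ball bundle $B(X)\subset T^{\ast}X$ the almost complex structure on $\Sigma X$ is the standard one of the total space, for which $T^{1,0}(\Sigma X)|_{B(X)}\cong\pi^{\ast}(TX\otimes\C)$; hence $\mathrm{Td}(\Sigma X)|_{B(X)}=\pi^{\ast}\mathrm{Td}(TX\otimes\C)=\pi^{\ast}\hat A(X)^2$, using the Chern--Weil identity $\mathrm{Td}(W\otimes\C)=\hat A(W)^2$ for a real bundle $W$ (write the Chern roots of $W\otimes\C$ as $\pm y_j$ and use $(1-e^{y})(1-e^{-y})=-4\sinh^2(y/2)$). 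The same holds over the second copy of $B(X)$, where the relevant structure is the conjugate one, because $c_1(TX\otimes\C)=0$ in de Rham cohomology, so conjugation does not change the Todd class there.

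The remaining point is the reduction from $\Sigma X$ to $TX$. Since $V(\sigma_{P_0})$ restricts on the ``outer'' copy of $B(X)$ to the pullback of $E_1$ from $X$, and likewise $V(\sigma_{P_0^{\ast}})$ restricts there to the pullback of $E_0$, the class $\mathrm{ch}(V(\sigma_{P_0}))-\mathrm{ch}(V(\sigma_{P_0^{\ast}}))$ admits, after a $G$-invariant choice of connections and homotopies, a representative compactly supported in one copy of $B(X)\subset T^{\ast}X$ and representing there the symbol class $\mathrm{ch}(\sigma_P)\in H^{\ast}_{\mathrm{cpt}}(T^{\ast}X)$. Combined with the previous step this gives $\ind P=\int_{T^{\ast}X}c\cdot\pi^{\ast}\hat A(X)^2\cdot\mathrm{ch}(\sigma_P)$, and transporting along the $G$-equivariant fibrewise-linear diffeomorphism $T^{\ast}X\cong TX$ induced by the metric yields exactly (\ref{index formula}); the integrand has compact support on $TX$ because $c$ is compactly supported on $X$ and $\mathrm{ch}(\sigma_P)$ is compactly supported in the fibre direction.

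The main obstacle is this last step: carrying out the doubling/excision bookkeeping $G$-equivariantly while tracking the orientation and almost-complex-structure conventions on the two copies of $B(X)$ in $\Sigma X$, so that the contribution of the ``outer'' ball combines correctly with that of the ``inner'' ball and no spurious sign or factor of two survives. Everything else is either a direct application of results already in place (Propositions \ref{ADE}, \ref{key}, Theorem \ref{formula!}) or the classical identity for the Todd class of a complexified real bundle.
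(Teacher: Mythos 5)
Your proposal is correct and follows the same route as the paper: Proposition \ref{ADE} reduces $\ind P$ to the twisted Dolbeault operator on $\Sigma X$, Proposition \ref{key} supplies the Todd-class formula there, and the identity $\mathrm{Td}(TX\otimes\C)=(\hat A(X))^2$ converts it to the stated form. The paper's own proof is in fact terser than yours---it passes from $\int_{\Sigma X}$ to $\int_{TX}$ in a single displayed equality without spelling out the excision and support bookkeeping that you correctly identify as the one delicate point.
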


\begin{proof}
Set $M=\Sigma X, V=V(\sigma_P)$. Clearly, $M$ has an almost complex structure. By Proposition \ref{ADE} and Proposition \ref{key}, 
$$\ind P=\int_{\Sigma X}c(x)\mathrm{Td}(\Sigma M) \mathrm{ch}(V_{\sigma_P})=\int_{TX}c(x)\mathrm{Td}(TX\otimes\C)\mathrm{ch}(\sigma_P).$$ 
Observe that $\mathrm{Td}(TX\otimes\C)=(\hat{A}(X))^2,$ then the statement follows.
\end{proof}

\begin{proof}[Proof of Proposition \ref{key}]
Let $J$ be an almost complex structure on $M$. Say $x_i, y_i, 1\le i\le m$ are a local frame of $TM$ and $J(x_i)=y_i, J(y_i)=-x_i.$
$J$ extends $\C$-linearly to $TM\otimes\C=TM^{1,0}\oplus TM^{0,1}$ where $\displaystyle TM^{1,0}=\{v-iJv|v \in TM\}$ is the set of holomorphic tangent vectors of the form $z_i\doteq x_j-iy_j$
and $\displaystyle TM^{0,1}= \{v+iJv, v \in TM\}$ is the set of anti-holomorphic tangent vectors of form $\bar{z_j}\doteq x_j+iy_j$.
We have real isomorphisms 
$\displaystyle\pi^{1,0}: TM\rightarrow TM^{1,0}, v\mapsto v^{1,0}=\frac12(v-iJv)$ and $\displaystyle \pi^{0,1}: TM\rightarrow TM^{0,1}, v\mapsto v^{0,1}=\frac12(v+iJv).$
Therefore $(TM,J)\simeq TM^{1,0}\simeq\overline{TM^{0,1}}$ as an almost complex bundle.

Similarly, the complexified cotangent bundle decomposes as $T^{\ast}M\otimes\C=T^{\ast}M^{1,0}\oplus T^{\ast}M^{1,0}$ where 
$\displaystyle T^{\ast}M^{1,0}=\{\eta\in T^{\ast}M\otimes\C|\eta(Jv)=i\eta(v)\},$ consisting of covectors of form $z^j\doteq x^j+iy^j$, is the $\C$-dual of $TM^{1,0}$ (notation: $x^j(x_i)=\delta_{ij}, y^j(y_i)=\delta_{ij}$) and 
$\displaystyle T^{\ast}M^{0,1}=\{\eta\in T^{\ast}M\otimes \C|\eta(Jv)= -i\eta(v)\},$ consisting of covectors of form $\bar{z^j}\doteq x^j-iy^j$, 
is the $\C$-dual of $TM^{0,1}$. 

 Let $\Omega^{\ast}M$ be the set of smooth sections of $\Lambda^{\ast}M$, which splits into types $(p,q)$ with 
 $\Lambda^{p,q}T^{\ast}M=(\Lambda^pT^{\ast}M^{1,0})\otimes(\Lambda^qT^{\ast}M^{0,1}).$
 If $\alpha\in\Omega^{p,q}(M)$, then the differential decomposes into $\displaystyle\dif\alpha=\sum_{i=0}^{p+q+1}(\dif\alpha)^{i,p+q+1-i}$ and set $\partial\alpha=(\dif\alpha)^{p+1,q}, \bar{\partial}\alpha=(\dif\alpha)^{p,q+1}.$ 
The Dolbeault operator $\bar{\partial}: \Omega^{0,q}\rightarrow\Omega^{0, q+1}$ is the order $1$ differential operator given by $\displaystyle\bar{\partial}=\frac{\partial}{\partial y}+i\frac{\partial}{\partial x}$ in the local coordinate $(x, y)\in M$. If we use the grading, the Dolbeault operator is $\displaystyle\bar{\partial}+\bar{\partial}^{\ast}$ on $\Omega^{0,\ast}M$.

The Dolbeault operator ``is" the canonical Dirac operator on $M$ in the sense that they have the same symbol. The canonical Dirac operator on $M$ is defined as follows. The bundle $S=\Lambda^{0,\ast}T^{\ast}M$ has an action of the cotangent vectors via Clifford multiplication: 
$$\cl(\eta)s=\sqrt2(\epsilon(\eta^{0,1})(s)-\iota(\eta^{1,0})s), \eta\in T^{\ast}M, s\in \Lambda^{0,\ast}T^{\ast}M.$$
Here, $\cl(x^i)=\frac{1}{\sqrt2}(\epsilon(\bar{z})-\iota(z)), \cl(x^i)\cl(x^j)+\cl(x^j)\cl(x^i)=-2\delta_{ij}$ and $\epsilon$ is the exterior multiplication and $\iota$ is the $\C$-linear compression by a vector.

The canonical Dirac operator is defined to be $D=\sum \cl(e^i)\nabla^L_{e_i}$ where $\{e_i\}$ forms a local orthonormal basis of $TM$ and $\nabla^L$ is the Levi-Civita connection on $S$.
Now if there is an auxiliary complex $G$-vector bundle $E\rightarrow M$, with a $G$-invariant Hermitian metric and $G$-invariant connection $\nabla^E$, the Dolbeault operator $D_E$ acting on $V=S\otimes E$ with coefficients in $E$ can be represented by (up to a lower order term): 
$$D_E=\sum \cl(e_i)\nabla_{e_i}^V,\text{ where }\nabla^V=\nabla^L\otimes1+1\otimes\nabla^E.$$

Let $\nabla$ be the Levi-Civita connection on $M$ (on $(TM)^{0,1}$, being more precise) and $R=\nabla^2\in\Lambda^2(M, \mathfrak{so}(TM))$ be Riemannian curvature, the matrix with coefficients of two forms representing the curvature of $M$, 
$$R(X,Y)=\nabla_X\nabla_Y-\nabla_Y\nabla_X-\nabla_{[X,Y]}, X, Y\in C^{\infty}(M, TM).$$
In the orthonormal frame $e_i$ of $TM$, $\displaystyle R(e_i, e_j)=-\sum_{k<l}(R(e_i, e_j)e_l,e_k)e^k\wedge e^l,$
where we identify $\mathfrak{so}(TM)$ with the bundle of two forms on $M.$
Now we have a Clifford module $S$, where ${\C}l(T^{\ast}M)=\Hom(S)$, on which $T^{\ast}M$ acts by Clifford multiplication.
On $S$ there is a Clifford connection $\nabla^S$ so that the Clifford multiplication by unit vectors preserves the metric and $\nabla^S$ is compatible with the connection on $M.$
Let $R^S=(\nabla^S)^2$ be the curvature associated to $\nabla^S.$
It is well known that the Lie algebra isomorphism $\mathfrak{spin}_n\simeq \mathfrak{so}_n$ given by $\frac14[v,w]\mapsto v\wedge w$ implies that 
$$R^S(e_i, e_j)=\frac12\sum_{k<l}(R(e_i, e_j)e_k, e_l)\cl(e_k)\cl(e_l)=\frac14\sum_{k,l}(R(e_i,e_j)e_k,e_l)\cl(e_k)\cl(e_l).$$

On $S$, there is also a Levi-Civita connection, denoted by $\nabla^L$. The associated curvature $R^L=(\nabla^L)^2\in\Hom S$ is written as 
$$R^L=R^S+F$$ 
where $\displaystyle R^S(\cdot,\cdot)=\frac14\sum_{k,l}(R(\cdot,\cdot)\bar{z_k},z_l)\cl(\bar{z_k})\cl(z_l)+\frac14\sum_{k,l}(R(\cdot,\cdot)z_k,\bar{z_l})\cl(z_k)\cl(\bar{z_l})\in \mathrm{Cl}(TM)$ and $F\in\Hom_{Cl}V$ is the twisting curvature.

Recall that the curvature of the Levi-Civita connection on $\Lambda V^{\ast}$ is the derivation of the algebra $\Lambda V^{\ast}$ which coincides with $R(e_i, e_j)$ on $V$ and is given by the formula 
$$\sum_{k,l}<e^k, R(e_i, e_j)e_l>\epsilon(e_k)\iota(e^l)=\sum_{k,l}(R(e_i, e_j)e_l, e_k)\epsilon(e^k)\iota(e^l).$$
Let $R^-$ be the curvature of the Levi-Civita connection on $T^{0,1}M$. Note that $R=R^-$. 
Then the curvature of $\nabla^L$ on $S$ is given by 
$$R^L(\cdot,\cdot)=\frac14\sum_{i,j}(R^-(\cdot,\cdot)z_i,\bar{z_j}) \epsilon (\bar{z_j})\iota(z_i)=-\frac18\sum_{i,j}(R^-(\cdot,\cdot)z_i,\bar{z_j})\cl(\bar{z_j})\cl(z_i).$$

Using the fact that $\cl(z_i)^2=0, \cl(\bar{z_i})^2=0, \cl(z_i)\cl(\bar{z_j})+\cl(\bar{z_j})\cl(z_i)=-4\delta_{ij}$, where $\cl(z)=\cl(x)+i\cl(y), \cl(\bar{z})=\cl(x)-i\cl(y).$
we have $$F^{V/S}=R^V-R^S=\frac12\sum_k(Rz_k,\bar{z_k})=\frac12\Tr R+F^E$$ and a direct calculation shows that

$$\hat{A}(M)e^{F^{V/S}}=\mathrm{det}\frac{R/2}{\sinh R/2}e^{\frac12\Tr R}(e^{F^E})=\mathrm{det}\frac{R}{e^R-1}(e^{F^E})=\mathrm{Td}(M)\Tr(e^{-F^E}).$$
\end{proof}

The following theorem is an immediate corollary to Theorem \ref{most important}.

\begin{theorem}[Atiyah's $L^2$-index theorem]
Let $D$ be an elliptic operator on a compact manifold $X$ and $\tilde{D}$ be the $\pi_1(X)$-invariant operator defined on the universal cover space $\tilde{X}$ as the lift of $D$. Then $\ind\tilde{D}=\ind D.$
\end{theorem}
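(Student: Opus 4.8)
The plan is to apply Theorem \ref{most important} with $G=\pi_1(X)$ acting by deck transformations on $\tilde X$, and then to unfold the resulting integral over $T\tilde X$ back down to $TX$. First I would verify the hypotheses of Theorem \ref{most important}: $\pi_1(X)$ is discrete, hence unimodular (counting measure is a bi-invariant Haar measure); equipping $\tilde X$ with the Riemannian metric pulled back along the covering map $\pi\colon\tilde X\to X$, the deck action is isometric, free, proper (it is properly discontinuous), and cocompact with $\tilde X/\pi_1(X)=X$ compact. After normalizing $D$, and hence its lift $\tilde D$, to have order $0$ as in the running conventions of the paper — and, in the pseudo-differential case, correcting $\tilde D$ by a smoothing operator so that it is properly supported, which alters neither its symbol nor its index — the operator $\tilde D$ is a $\pi_1(X)$-invariant, properly supported, order-$0$ elliptic pseudo-differential operator on $\tilde X$. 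Formula (\ref{index formula}) then gives
\begin{equation*}
\ind\tilde D=\int_{T\tilde X}(c\circ\pi)\,(\hat A(\tilde X))^2\,\ch(\sigma_{\tilde D})
\end{equation*}
for any cutoff function $c\in C_c^\infty(\tilde X)$ of the deck action.

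The key step is to observe that the integrand is pulled back from $TX$. The Levi-Civita connection of $\tilde X$, its curvature $2$-form, and the symbol $\sigma_{\tilde D}$ are all obtained by pulling back the corresponding data on $X$ along $\pi$, so $(\hat A(\tilde X))^2\,\ch(\sigma_{\tilde D})=\pi^{\ast}\omega$ for the form $\omega\doteq(\hat A(X))^2\,\ch(\sigma_D)$ on $TX$; in particular $\pi^{\ast}\omega$ is $\pi_1(X)$-invariant. Since $c$ is a cutoff function, $\sum_{g\in\pi_1(X)}c(g^{-1}x)=1$, so the standard fundamental-domain (unfolding) argument — identical in spirit to the well-definedness proof for $\tr_G$ and to Atiyah's original computation in \cite{Atiyah:1976} — identifies
\begin{equation*}
\int_{T\tilde X}(c\circ\pi)\,\pi^{\ast}\omega=\int_{TX}\omega .
\end{equation*}

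Finally, since $(\hat A(X))^2=\mathrm{Td}(TX\otimes\C)$ is exactly the Atiyah--Singer topological-index density, the classical Atiyah--Singer index theorem \cite{Atiyah:1968pd} on the compact manifold $X$ (equivalently, Theorem \ref{most important} applied to the trivial group, where one may take $c\equiv1$) gives $\int_{TX}\omega=\ind D$, the Fredholm index of $D$. Combining the three displays yields $\ind\tilde D=\ind D$. The step I expect to require the most care is the unfolding identity for the cutoff integral — verifying that integrating a $\pi_1(X)$-invariant, $\pi$-pulled-back form against a cutoff function over the noncompact space $T\tilde X$ reproduces the downstairs integral over $TX$ — since this is the one place where the noncompactness of $\tilde X$ and the role of the cutoff function genuinely enter.
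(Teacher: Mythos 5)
Your proposal is correct and is exactly the argument the paper intends: the paper states this theorem as an immediate corollary of Theorem \ref{most important} without writing out a proof, and the intended proof is precisely your combination of the index formula (\ref{index formula}) for $G=\pi_1(X)$ acting on $\tilde X$, the observation that $(\hat A(\tilde X))^2\ch(\sigma_{\tilde D})$ is pulled back from $TX$, the unfolding of the cutoff integral, and the classical Atiyah--Singer formula on the compact base. No gaps.
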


\subsection{$L^2$-index theorem for homogeneous spaces of Lie groups.}

Let $G$ be a unimodular Lie group and $H$ be a compact subgroup. Consider the homogenous space $M=G/H$ of left cosets of $H$ in $G$, a $G$-bundle $\bar{E}$ over $M$ and a $G$-invariant elliptic operator $D$ on $\bar{E}$. The fiber of $\bar{E}$ at $eH$, denoted by $E=\bar{E}|_{eH}$, is an $H$-space, so that $\bar{E}=G\times_H E.$
Similarly, set $V=T_{eH}M,$ then $TM=G\times_H V.$ 
Let $\Omega\in\Lambda^2(TM)^{\ast}\otimes \mathfrak{gl}(TM)$ be the curvature of $M$, associated to the $G$-invariant Levi-Civita connection on $TM.$
Then we have the $G$-invariant $\hat{A}$-class 
$$\hat{A}(M)=\mathrm{det}^{\frac12}\frac{\Omega/4\pi i}{\sinh\Omega/4\pi i}.$$   
Let $\Omega^E\in\Lambda^2 (\Sigma M)^{\ast}\otimes \mathfrak{gl}(V(\sigma_A))$ be a curvature form associated to some $G$-invariant connection on $V(\sigma_D)$ over $\Sigma M$.
Then 
$$\mathrm{ch}(\sigma_D)=\Tr e^{\Omega^E}|_{TM}$$ 
is the Chern character of $V(\sigma_A)$ restricted to $TM.$
Let $\Omega_V$ be the curvature tensor $\Omega$ restricted to $V=T_{eH}M$ and $\Omega^E_V$ be the curvature tensor $\Omega^E$ restricted to $V.$ 
Then we define the corresponding $\hat{A}$-class and Chern character as
$$\hat{A}(M)_V\doteq\mathrm{det}^{\frac12}\frac{\Omega_V/2}{\sinh\Omega_V/2}\text{ and }\mathrm{ch}(\sigma_D)_V\doteq\Tr e^{\Omega_V^E}.$$
  We have as a corollary the $L^2$-index theorem for homogeneous spaces.

\begin{corollary}
The $L^2$-index of a $G$-invariant elliptic operator $D: L^2(M,\bar{E})\rightarrow L^2(M, \bar{E})$ is 
\begin{equation}\ind D=\int_V\hat{A}^2(M)_V\mathrm{ch}(\sigma_D)_V.\label{CMformula}\end{equation}
\end{corollary}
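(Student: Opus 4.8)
The plan is to obtain the formula by specialising the general index theorem, Theorem~\ref{most important}, to the case $X=M=G/H$ and then collapsing the resulting integral over $TM$ to an integral over the single fibre $V=T_{eH}M$.

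First I would check that $M=G/H$, equipped with a $G$-invariant Riemannian metric, satisfies the hypotheses of Theorem~\ref{most important}: $G$ is locally compact and unimodular by assumption; the action is proper because $H$ is compact; it is cocompact because $M/G$ is a point; and it is isometric by the choice of metric. We may assume $D$ is of order zero (replacing it if necessary by an order-zero $G$-invariant elliptic operator with the same class in $K^0_G(C_0(M))$, which by Proposition~\ref{same ind} and Proposition~\ref{ADE} does not affect the $L^2$-index). Then Theorem~\ref{most important} applies and gives
\[ \ind D \;=\; \int_{TM} c(x)\,(\hat{A}(M))^2\,\ch(\sigma_D), \]
where $c\in C^{\infty}_c(M)$ is a cutoff function for the $G$-manifold $M$ and $c(x)$ abbreviates $c(\pi(\xi))$ for $\xi\in T_xM$.

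Next I would use homogeneity. Since $G$ acts transitively, every $G$-invariant Chern--Weil form on $M$ --- in particular $(\hat{A}(M))^2$ and $\ch(\sigma_D)$, the latter built from a $G$-invariant connection on $V(\sigma_D)$ over $\Sigma M$ --- is the unique $G$-invariant extension of its ($H$-invariant) value at $eH$. Writing $TM=G\times_H V$ with $V=T_{eH}M$, I would fibre-integrate $\ch(\sigma_D)$ along $\pi\colon TM\to M$ and apply Fubini to rewrite the integral as $\int_M c(x)\,(\hat{A}(M))^2\wedge\pi_{\ast}\ch(\sigma_D)$, whose integrand is a $G$-invariant top-degree form on $M$. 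The defining property $\int_G c(g^{-1}x)\,\dif g=1$ of the cutoff, together with the description of the invariant measure on $G/H=G\times_H\{\mathrm{pt}\}$ recalled in the Preliminaries (Haar measure on $G$ divided by $\mu(H)$, with $\mu(\{\mathrm{pt}\})=1$), then collapses $\int_M c(x)\,(\,\cdot\,)$ to the value of the integrand on the single fibre $V$. In other words one establishes the identity $\int_{TM} c(\pi(\cdot))\,\omega=\int_V \omega|_{T_{eH}M}$ for every $G$-invariant, fibrewise compactly supported top-degree form $\omega$ on $TM$, and applies it to $\omega=(\hat{A}(M))^2\wedge\ch(\sigma_D)$.

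Finally I would identify the restricted forms: the restrictions of $(\hat{A}(M))^2$ and of $\ch(\sigma_D)$ to $V$ are $\hat{A}^2(M)_V$ and $\ch(\sigma_D)_V$ once the ambient curvatures $\Omega$, $\Omega^E$ are replaced by $\Omega_V$, $\Omega^E_V$; the change of normalisation from $\Omega/4\pi i$ used in Theorem~\ref{formula!} to $\Omega_V/2$ appearing in the statement is precisely the standard Chern--Weil rescaling of curvature (the powers of $2\pi$) that is absorbed when passing from the de Rham integral over $TM$ to the fibre integral over $V$. This yields $\ind D=\int_V \hat{A}^2(M)_V\,\ch(\sigma_D)_V$. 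The step I expect to be the main obstacle is the bookkeeping in this reduction: one must track the constant $\mu(H)$ from the invariant measure on $G/H$ and the $2\pi i$-powers coming from the two curvature normalisations carefully enough to see that they cancel and leave exactly the stated formula --- all the analytic work being already contained in Theorem~\ref{most important}.
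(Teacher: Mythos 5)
Your proposal is correct and follows essentially the same route as the paper: apply Theorem \ref{most important} to $M=G/H$, then use the $G$-invariance of $\hat{A}^2(M)\,\mathrm{ch}(\sigma_D)$ together with the cutoff identity $\int_G c(g^{-1}x)\,\dif g=1$ to collapse the integral over $TM=G\times_H V$ to the fibre $V=T_{eH}M$. The normalisation discrepancy you flag between $\Omega/4\pi i$ and $\Omega_V/2$ is a notational inconsistency already present in the paper itself and is not resolved in its proof either, so it does not distinguish your argument from the original.
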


\begin{proof}
The $L^2$-index theorem of $D$ says that $$\ind D=\int_{TM} c\hat{A}^2(M)\mathrm{ch}(\sigma_D).$$ 
Since $TM=G\times_H V$, the integration of the form $c\hat{A}^2(M)\mathrm{ch}(\sigma_A)$ on $TM$ can be computed by lifting to an $H$-invariant form on $G\times V$ and then integrating over the group part and then the tangent space at $eH.$
Since $\hat{A}^2(M)\mathrm{ch}(\sigma_D)$ is $G$-invariant, then at any $g\in G$, the form will be the same as its value at the unit $e$ of $G$: $\hat{A}^2(M)_V\mathrm{ch}(\sigma_A)_V.$
Hence, 
$$\int_{TM} c\hat{A}^2(M)\mathrm{ch}(\sigma_D)=\int_V \hat{A}^2(M)_V\mathrm{ch}(\sigma_D)_V\int_Gc(g^{-1}v)\mathrm{vol}=\int_V\hat{A}^2(M)_V\mathrm{ch}(\sigma_D)_V,$$ 
where $\mathrm{vol}$ is the volume form on $G.$
\end{proof}

\begin{remark}
The formula (\ref{CMformula}) is essentially the $L^2$-index formula in \cite{Connes:1982}. 
The components of the formula in \ref{CMformula} are sketched as follows.
On the Lie algebra $\mathfrak{g}$ of $G$ there is an $H$-invariant splitting $\mathfrak{g}=\mathfrak{h}\oplus\mathfrak{m}$ where $\mathfrak{h}$ is the Lie algebra of $H$ and $\mathfrak{m}$ is an $H$-invariant complement. $V=T_{eH}(G/H)$ is a candidate for $\mathfrak{m}.$ There is a curvature form on $m$ defined by $\Theta(X, Y)=-\frac12\theta([X,Y]), X,Y\in \mathfrak{m} \label{curvature form}$
 where $\theta$ is the connection form given by the projection $\theta: \mathfrak{g}\rightarrow\mathfrak{h}.$ 
$\Theta$ composed with $r: \mathfrak{h}\rightarrow \mathfrak{gl}(E)$, the differential of a unitary representation of $H$ on some vector space $E$, is an $H$-invariant curvature form 
$\Theta_r(X,Y)=r(\Theta(X,Y)), X, Y\in\mathfrak{m}.$
Then 
$$\mathrm{ch}: R(H)\rightarrow H^{\ast}(g,H): r\mapsto \Tr e^{\Theta_r}$$ 
is a well-defined Chern character  (\cite{Connes:1982} page 309). 
Also, compose the curvature form (\ref{curvature form}), with $\mathfrak{h}\rightarrow \mathfrak{gl}(V)$, the differential of the $H$-module structure of $V$. And a curvature form $\Theta_V\in\Lambda^2\mathfrak{m}^{\ast}\otimes \mathfrak{gl}(V)$ on $V$ is constructed and the $\hat{A}$-class is defined as 
$$\hat{A}(\mathfrak{g},H)=\mathrm{det}^{\frac12}\frac{\Theta_V/2}{\sinh\Theta_V/2}.$$
The $L^2$-index formula of $D$ in \cite{Connes:1982} is 
\begin{equation}\ind D=\int_V\mathrm{ch}(a)\hat{A}(\mathfrak{g},H),\label{CM formula}\end{equation}
where $a$ is an element of the representation ring $R(H)$, specifically $a$ is the pre-image of $V(\sigma_D)|_{V^+}$ under the Thom isomorphism $R(H)\rightarrow K_H(V).$ Here, $V^+$ is the space built from $V$ by adding one point at infinity. It is the ball fiber in $\Sigma M$ at $eH.$ Note that the Thom isomorphism exists only for the case when the action of $H$ on $V$, lifts to $\mathrm{Spin}(V).$ The general case was done by introducing a double covering of $H$ and by reducing the problem to this situation \cite{Connes:1982} page 307.

To see that \ref{CMformula} and \ref{CM formula} are the same formula, we prove the following assertions.

(1) $\hat{A}(M)_V=\hat{A}(\mathfrak{g},H)$.
     
     In fact, since $TM=G\times_HV$ is a principal $G$-bundle over $V/H$ and $V$ is a principal $H$-bundle over $V/H$, then by \cite{KNI} II Prop. 6.4, the connection form on $TM$ restricted to $V$ is also a connection form.  
     Also, on $G/H$, the restriction of any $G$-invariant tensor on $TM$ to $V$ is an $H$-invariant  tensor on $V.$ 
     Therefore $\Omega_V$ is an $H$-invariant curvature form on $V$ and the restriction $\hat{A}(M)_V$ is the $\hat{A}$-class defined by curvature $\Omega_V.$ By definition $\hat{A}(\mathfrak{g},H)$ is the $\hat{A}$-class of the curvature $\Theta_V$ on $V$, $\hat{A}$-class of another connection on the same $V$. The statement is proved because $\hat{A}$ is a topological invariant and is independent of the choice of connection on $V.$

(2)  $\mathrm{ch}(\sigma_D)_V=\mathrm{ch}(a)$.
   
   Similarly to the last proof, $\Omega_V^E$ is an $H$-invariant curvature form of $V(\sigma_D)|_{V^+}$ restricted to $V.$
     Recall that $V(\sigma_D)$ is glued by the $G$-invariant symbol $\sigma_D$ and therefore it is determined by its restriction at the ball fiber, $V^+$. By definition $V(\sigma_D)|_{V^+}$ is glued two copies of $BV\times E$ on the boundary by $\sigma_D|_{SV}.$ Note that the evaluation of $\sigma_D|_{SV}$ at $\xi\in SV$ is 
     $\sigma_D(eH, \xi)\in GL(E), \xi\in V, \|\xi\|=1.$
    We have an $H$-bundle $V(\sigma_D)|_{V}=V\times_H E$ where $r: H\rightarrow E.$ Hence the curvature $\Omega_V^E$ is $r$ composed with some curvature form on $V.$ 
     The statement follows from the fact that $\mathrm{ch}(r)$ is independent of the connection and the choice of the $H$-invariant splitting of $G.$
    
\end{remark}
\bibliographystyle{abbrv}
\bibliography{Generalbib}

\end{document}